\numberwithin{equation}{section}
\numberwithin{figure}{section}
\theoremstyle{plain}
\newtheorem{thm}{\protect\theoremname}
\numberwithin{thm}{section}
\theoremstyle{definition}
\newtheorem{defn}[thm]{\protect\definitionname}
\theoremstyle{remark}
\newtheorem{rem}[thm]{\protect\remarkname}
\theoremstyle{plain}
\newtheorem{lem}[thm]{\protect\lemmaname}
\theoremstyle{plain}
\newtheorem{prop}[thm]{\protect\propositionname}
\theoremstyle{plain}
\newtheorem{cor}[thm]{\protect\corollaryname}
\theoremstyle{plain}
\newtheorem{example}[thm]{\protect\examplename}
\providecommand{\corollaryname}{Corollary}
\providecommand{\definitionname}{Definition}
\providecommand{\lemmaname}{Lemma}
\providecommand{\propositionname}{Proposition}
\providecommand{\remarkname}{Remark}
\providecommand{\theoremname}{Theorem}
\providecommand{\examplename}{Example}
\newcommand{\cA}{\mathcal{A}}
\newcommand{\cD}{\mathcal{D}}
\newcommand{\cF}{\mathcal{F}}
\newcommand{\cG}{\mathcal{G}}
\newcommand{\cH}{\mathcal{H}}
\newcommand{\cI}{\mathcal{I}}
\newcommand{\cO}{\mathcal{O}}
\newcommand{\cP}{\mathcal{P}}
\newcommand{\cT}{\mathcal{T}}
\newcommand{\cU}{\mathcal{U}}
\newcommand{\cV}{\mathcal{V}}
\newcommand{\cOT}{\cO\cT}
\newcommand{\cOF}{\cO\cF}
\newcommand{\cUT}{\cU\cT}
\newcommand{\cUF}{\cU\cF}
\newcommand{\LL}{\mathbb{L}}
\newcommand{\NN}{\mathbb{N}}
\newcommand{\RR}{\mathbb{R}}
\newcommand{\XX}{\mathbb{X}}
\newcommand{\YY}{\mathbb{Y}}
\newcommand{\ZZ}{\mathbb{Z}}
\newcommand{\one}{{\mathbbm 1}}
\newcommand{\myspan}{\mbox{span}}
\newcommand{\la}{\langle}
\newcommand{\ra}{\rangle}
\newcommand{\scalar}[1]{\left\la #1 \right\ra}
\newcommand{\brackets}[1]{\left\llbracket #1 \right\rrbracket}
\newcommand{\Id}{\mathrm{Id}}
\newcommand{\diag}{\mathrm{diag}}
\newcommand{\Lip}{\mbox{Lip}}
\newcommand{\dist}{\mbox{dist}}
\newcommand{\norm}[1]{\left\lVert #1\right\rVert}
\newcommand{\abs}[1]{\left\lvert #1\right\rvert}
\newenvironment{tree}
               {\begin{tikzpicture}[every node/.style={circle,draw,fill,minimum size=3pt,inner sep=0pt,outer sep=0pt},line cap=round,baseline = .1pt]}
               {\end{tikzpicture}}
\newcommand{\mynode}[1]{node[label=right:{\tiny #1}]{}}
\newcommand{\tomato}[1]{
    \begin{tree}
        \draw (0,.1)\mynode{{#1}};
    \end{tree}}
\newcommand{\ladderTwo}[2]{\begin{tree}
                            \node[label=right:{\tiny #1}]{}[grow'=up, level distance = 2ex, sibling distance = 2.5ex]
                                child{node [label=right:{\tiny #2}] {}};
                        \end{tree}}
\newcommand{\ladderThree}[3]{\begin{tree}
                            \node[label=right:{\tiny #1}]{}[grow'=up, level distance = 2ex, sibling distance = 2.5ex]
                                child{node [label=right:{\tiny #2}] {}
                                    child{node [label=right:{\tiny #3}]{}}
                                };
                        \end{tree}}
\newcommand{\cherry}[3]{\begin{tree}
                            \node[label=right:{\tiny #1}]{}[grow'=up, level distance = 2ex, sibling distance = 2.5ex]
                                child{node [label=right:{\tiny #2}] {}}
                                child{node [label=right:{\tiny #3}] {}};
                        \end{tree}}
\newcommand{\righttree}[4]{\begin{tree}
                            \node[label=right:{\tiny #1}]{}[grow'=up, level distance = 2ex, sibling distance = 2.5ex]
                                child{node [label=right:{\tiny #2}] {}}
                                child{node [label=right:{\tiny #3}] {}
                                    child{node [label=right:{\tiny #4}] {}}
                                    };
                        \end{tree}}
\newcommand{\lefttree}[4]{\begin{tree}
                            \node[label=right:{\tiny #1}]{}[grow'=up, level distance = 2ex, sibling distance = 2.5ex]
                                child{node [label=right:{\tiny #2}] {}
                                    child{node [label=right:{\tiny #4}] {}}}
                                child{node [label=right:{\tiny #3}] {}};
                        \end{tree}}
\newcommand{\Ytree}[4]{\begin{tree}
                            \node[label=right:{\tiny #1}]{}[grow'=up, level distance = 2ex, sibling distance = 2.5ex]
                                child{node [label=right:{\tiny #2}] {}
                                    child{node [label=right:{\tiny #3}] {}}
                                    child{node [label=right:{\tiny #4}] {}}};
                        \end{tree}}
\newcommand{\bushThree}[4]{\begin{tree}
                            \node[label=right:{\tiny #1}]{}[grow'=up, level distance = 2ex, sibling distance = 2.5ex]
                                child{node [label=right:{\tiny #2}] {}}
                                child{node [label=right:{\tiny #3}] {}}
                                child{node [label=right:{\tiny #4}] {}};
                        \end{tree}}
\title{Flow techniques for non-geometric RDEs on manifolds}
\author{Hannes Kern and Terry Lyons}
\date{\today}
\begin{document}

\maketitle

\begin{abstract}
    In 2015, Bailleul presented a mechanism to solve rough differential equations by constructing flows, using the log-ODE method. We extend this notion in two ways: On the one hand, we localize Bailleul's notion of an almost-flow to solve RDEs on manifolds. On the other hand, we extend his results to non-geometric rough paths, living in any connected, cocommutative, graded Hopf algebra. This requires a new concept, which we call a pseudo bialgebra map. We further connect our results to Curry et al (2020), who solved planarly branched RDEs on homogeneous spaces.
\end{abstract}

\tableofcontents

\section{Introduction}

\subsection{The story in a nutshell}

\noindent We are interested in analyzing the flow operation associated with a rough differential equation: Consider a (for now geometric) rough path $\XX$ and a set of vector fields $V_i$ on $\RR^d$. Then, the solution to the RDE
\begin{equation}\label{eq:RDE}
    dY_t = \sum_{i=1}^d V_i(Y_t)d\XX^i_t
\end{equation}
follows the Davie's series \cite{davie},\cite{FV10}:
\begin{equation}\label{eq:expansion}
    Y_t = Y_s + V_i(Y_s) X^i_{s,t} + V_iV_j \Id(Y_s) \XX^{i,j}_{s,t} + o(\abs{t-s}^{3\alpha})\,,
\end{equation}
if $\XX$ is $\alpha$-Hölder continuous. Thus, the flow operator of the solution $Y_t$, given by $\mu_{s,t}(y) = Y_t$ for $y= Y_s$ has the local expansion
\begin{equation}\label{eq:operatorY}
\mu_{s,t} = \Id + X^i_{s,t} V_i + \XX^{i,j}_{s,t} V_i V_j + \dots\,,
\end{equation}
where one recovers \eqref{eq:expansion} by applying $\mu_{s,t}$ to the identity map $\Id$. We want to analyze this perspective towards solving RDEs: We show that Davie's formula is a direct result of the interaction between a rough path and what we call a \emph{pseudo bialgebra map} (see Def \ref{def:pseudoBi}). For geometric RDEs, this directly leads to the perspective of \cite{bailleul15}. For non-geometric rough paths \cite{gubinelli04},\cite{HK15} one only needs to identify a new pseudo bialgebra map and apply the same machinery to get a Davie's formula. Furthermore, this approach easily extends to RDEs on manifolds, which gives us a new framing for non-geometric rough paths on manifolds. We especially show that our techniques generalize the results of \cite{emilio22} and \cite{curry20}. To summarize, the goals of this paper can be seen as the following:
\begin{itemize}
    \item We extend the flow techniques presented in \cite{bailleul15} to non-geometric rough paths. This requires us to replace the higher order differential operator $\XX^{ij}_{s,t}V_i V_j$ with appropriate differential operators $\XX^\tau_{s,t} D^\tau$, where $\tau$ are a basis of the more general Hopf algebra.
    \item By using vector fields $V_i, i=1,\dots, n$ living on a manifold $M$, we can extend this machinery to solve RDEs on manifolds. If $\XX$ is geometric, this approach works for all manifolds $M$, whereas a branched rough path requires a connection $\nabla$ on $M$ (see \cite{geometricViewRP}). We also analyze the case in which $\XX$ lives in any connected, graded, cocommutative Hopf algebra, which leads us to the previously mentioned pseudo bialgebra maps.
    \item This perspective allows us to reframe existing results, focusing on non-geometric RDEs on manifolds. We explain this new approach and show that it generalizes \cite{emilio22}, \cite{geometricViewRP} and \cite{curry20}.
\end{itemize}
Before we explain our approach, let us take a look at already existing results: While flow techniques for rough paths have been known since the nineties \cite{LYONS1997135}, they got their most concise presentation somewhat recently in \cite{bailleul15}. In this paper, Bailleul presents a simple machinery to derive the operator \eqref{eq:operatorY} with the use of a log-ODE method \cite{logODECastell}. If $\LL_{s,t}$ is the logarithm (in the Lie-sense) of $\XX_{s,t}$, it is possible to associate $\LL_{s,t}$ with a vector field $\cF(\LL_{s,t})$ in a canonical way. For small $\abs{t-s}$, one can then find the solution to the starting value problem 
\begin{equation*}
    \begin{cases}
        dZ_r = \cF(\LL_{s,t})(Z_r) \\
        Z_0 = z
    \end{cases}
\end{equation*}
and set $\tilde \mu_{s,t} z = Z_1$. \cite{bailleul15} then introduces a sewing lemma for almost-flows to ``sew'' together $\tilde\mu$ to get the solution flow $\mu$, leading to the actual solution of the RDE \eqref{eq:RDE}. He later extended this technique in the follow-up paper \cite{bailleul19}, where the authors introduced the notion of a \emph{rough flow}.

For non-geometric rough paths, flow techniques are not standard at the moment, although they are tremendously helpful in generalizing \eqref{eq:expansion} to RDEs on manifolds. For branched rough paths, originally introduced in \cite{GUBINELLI2010693}, the Davie's formula was shown in \cite{HK15}:
\begin{equation}\label{eq:branchedDavis}
    Y_t \approx \sum_{\tau}X^\tau_{s,t} (\cF(\tau) \Id) (Y_s)
\end{equation}
where the sum goes over all rooted trees up to a specific number of nodes, and for certain differential operators $\cF(\tau)$ depending on the vector fields $V_1,\dots, V_d$. The differential operators $\cF(\tau)$ first appeared in the theory of Butcher series, where they are called elementary differentials of $V_1,\dots, V_d$ (A term going back to Butcher \cite{butcher_1963} and Merson \cite{Merson}, more recent references include \cite{MKW08}, \cite{Lundervold2009HopfAO}.). This theory has been extended to RDEs on manifolds in \cite{geometricViewRP}, by replacing $\Id$ with some smooth map $\phi:M\to \RR^d$. Here, Weidner makes the observation that \eqref{eq:branchedDavis} does lead to coordinate dependent solutions when one takes the classical elementary differentials. However, they also demonstrate that one can construct different elementary differentials as long as the manifold is equipped with a flat, torsion-free connection or one has non-planarly branched rough paths, see Remark 4.35 of \cite{geometricViewRP}. We will analyze this map in more detail in Section \ref{subsec:MyMap}.

For the level $N=2$ case, one can get rid of the assumption that the connection is flat and torsion-free: In \cite{emilio22}, the authors use the theory of Itô integrals on manifolds \cite{paperEmery}, \cite{StochCalcEmery} as a motivation to add a correction term (similar to the Itô-Stratonovich correction term) to the rough-path integral to construct a coordinate independent integral, as long as the manifold is equipped with a connection. In the paper, they also derive a Davie's formula, allowing us to compare their results with our approach.

Similar results are well known in the Butcher-series theory: If $X$ is simply a smooth path, all of the above still holds true, as long as we replace the rough path with its signature. The theory of Butcher series then tells us how to choose $\cF(\tau)$ to get the correct solution on $M$, as long as it is a homogeneous space.

Curry et. al \cite{curry20} managed to use this inside to generalize this approach to the (to our knowledge) most general setting currently known: If $\XX$ is a planar branched rough path and $M$ is a homogeneous space, we can use the post-Lie algebra structure of the vector fields of $M$ to construct the differential operators $\cF(\tau)$ for each tree $\tau$ and solve the RDE.

The main structure of these approaches can be summarized into two main components: One first needs to identify the differential operators $\cF(\tau)$ and then argue that the Davie's formula \eqref{eq:branchedDavis} indeed generates a flow $\mu_{s,t}$. In this paper, we identify the minimum assumption on a map $\cF$, mapping the underlying Hopf algebra of $\XX$ into differential operators, such that $\mu_{s,t}$ becomes a flow. Interestingly, there seem to be two approaches to construct $\mu_{s,t}$ from its approximation \eqref{eq:branchedDavis}: Curry et. al. prefer to treat $\XX$ as an infinite series (that is, they do not truncate the underlying Hopf algebra), and show that the Taylor expansion converges. On the other hand, Baileul et.al. construct the flow $\mu_{s,t}$ with the use of a sewing lemma and log-Ode techniques, as described above. We will follow the second approach, as it avoids assumptions on the asymptotic dimension of the Hopf algebra.

To do so, we extend the sewing lemma for flows (\cite{bailleul15} Theorem 2.1) to a local setting, which allows us to use it on any finite-dimensional manifold. This allows us to find the minimal assumption on $\cF$ to make this machinery work, which we call a \emph{pseudo bialgebra map}. 

\begin{defn}\label{def:pseudoBi}
$\cF$ is called a pseudo bialgebra map, if $\cF(\one) = \Id$ and
\begin{align*}
    \cF(\tau\star\sigma) &= \cF(\tau)\circ\cF(\sigma)\\
    \cF(\Delta\tau)(\phi\otimes\psi) &= \cF(\tau)(\phi\cdot\psi)\,.
\end{align*}
\end{defn}

\noindent I.e. we want $\cF$ to be an algebra morphism as a map from $(\cH,\star)$ to the space of differential operators equipped with composition as a product, and we want the coproduct on $\cH$ to be "dual" to the product between functions.

Using the log-ODE method gives us two important insides into $\cF$: On the one hand, the log-ODE method only depends on $\cF$ restricted to the \emph{primitive elements} $\cP$ of the Hopf algebra $\cH$. This is not very surprising, as the Theorem of Milnor-Moore gives us a $1-1$ correspondence between the Hopf algebra $\cH$ and the Lie algebra of its primitive elements $\cP$ \cite{Milnor-Moore}. However, this does give us an alternative perspective on the pseudo bialgebra map $\cF$, as they are in a $1-1$ correspondence to Lie algebra maps mapping $\cP$ to vector fields. Furthermore, the log-ODE method does not rely on the post-Lie structure of the vector fields on $M$, which allows us to generalize the results from \cite{curry20} to general manifolds, as long as they are equipped with a connection. In this case, we can write down the map $\cF$ explicitly, leading to the map from \cite{geometricViewRP}: If our Hopf algebra is the Munthe-Kaas-Wright algebra \cite{MKW08} $\cH_{MKW}$ over ordered forests, the map $\cF$ mapping dual forests of $\cH_{MKW}$ to differential operators is given by
\begin{align*}
    \cF(\tomato i)\psi &= V_i\psi \\
    \cF(\tau_1,\dots,\tau_k)\psi &= \nabla^k \psi (\cF(\tau_1),\dots,\cF(\tau_k)) \\
    \cF([\tau_1,\dots,\tau_k]_i) &= \nabla^k V_i (\cF(\tau_1),\dots,\cF(\tau_k))\,,
\end{align*}
where $\tau_1,\dots,\tau_k$ are rooted, ordered trees and $[\tau_1,\dots,\tau_k]_i$ denotes the rooted, ordered tree with root $i$ and children $\tau_1,\dots,\tau_k$. $\nabla^k$ denotes the $k$-th total covariant derivative. We show that the above map is a pseudo bialgebra map generalizing the post-Lie map used in \cite{curry20}.

Last but not least, we briefly discuss if the solution $Y$ can be seen as a rough path itself. The short answer to the question is \emph{only in the geometric case}, which is already well known. However, we show a shorter proof than the classical one using only algebraic identities to show that in the geometric case, $Y$ becomes a rough path on $M$. The non-geometric case remains an interesting field for further research, although we recommend \cite{ferrucci22} for current advances in that direction.

\begin{rem}
    We will use the $\alpha$-Hölder setting for rough paths with $\alpha\in(0,1)$, as it is more common in this area and allows us to link our results to compare our results with \cite{emilio22}, \cite{bailleul15}, \cite{curry20} and more. However, we heavily suspect that our results hold just as well in the $p$-variation setting.
\end{rem}

\subsection{The structure of this paper}

Since the concept of an almost-flow can be constructed independently from rough path theory, we start this paper by introducing the concept of \emph{local flows} in Section \ref{sec:flows}. This construction allows us to extend the results of \cite{bailleul15} from flows on Banach spaces to flows on manifolds.

In Section \ref{sec:prelim}, we recall some facts about Hopf algebras and rough path theory and fix our notation. We also use this chance to briefly discuss the three most commonly used Hopf algebras in rough path theory: The tensor algebra for geometric rough paths, the Grossman-Larson algebra for non-planarly branched rough paths, and the Munthe-Kaas-Wright algebra for planarly branched rough paths. Finally, we recall some notations from differential geometry and linear connections.

Afterward, we introduce the concept of a \emph{pseudo bialgebra map} in Section \ref{sec:FlowsGenByRP}, which is the most general map $\cF$ from a Hopf algebra into the set of differential operators on $M$, such that the log-ODE generates an almost-flow. We further show that pseudo bialgebra maps are in a 1-1 correspondence to Lie algebra maps mapping primitive elements to vector fields. Finally, we show that a rough path and  a pseudo bialgebra map do indeed generate a local almost-flow on $M$, which has the Davie's formula \eqref{eq:branchedDavis}.

In section \ref{sec:elementaryDiff}, we go over the commonly used techniques to construct elementary differentials on the tensor algebra, the GL-algebra as well as the MKW-algebra. We show that all of these techniques generate pseudo bialgebra maps on their respective algebras and further show that on the MKW-algebra, the construction of $\cF$ can be generalized to all smooth manifolds equipped with a connection. This also allows us to relate the results from \cite{curry20} with \cite{emilio22}, who constructed rough path solutions on manifolds without a post-Lie structure.

\begin{rem}
    \cite{emilio22} only solves the RDE \ref{eq:RDE} for level $N=2$. It has been generalized to higher levels in \cite{ferrucci22} for quasi-geometric rough paths. We are convinced that there is another pseudo bialgebra map on the quasi-shuffle algebra hiding in this result, but at this moment, we are not sure what exactly this map looks like.
\end{rem}

\noindent One advantage \cite{emilio22} has over our theory, is that it can make sense of the solution of \eqref{eq:RDE} as a rough path on a manifold. In general Hopf algebras, it is not even clear how to define a rough path on a manifold. However, we do briefly discuss how to construct this for the geometric case in Section \ref{sec:RPonManifolds}, and present a new, mainly algebraic proof that the solution to \eqref{eq:RDE} is a rough path on $M$.

\section{Local flows and almost-flows}\label{sec:flows}

\noindent The goal of this section is to construct flows from \emph{almost-flows}, a concept introduced in \cite{bailleul15}. The main idea is that for a given almost-flow $\mu_{s,t}$, one can sew together a flow-map
\begin{equation}\label{eq:flowSewing}
\eta_{s,t} = \mu_{t_{N-1},t_N}\circ\dots\circ\mu_{t_1, t_2}\,,
\end{equation}
where $s=t_1<\dots< t_N = t$ is a dyadic partition of $[s,t]$. Given a manifold $M$, we can  express any flow in an open subset $U\subset\RR^d$ by using coordinates. So if we can guarantee, that we do not ``fall off'' $U$ during the sewing procedure, we could apply the sewing lemma for flows to get a flow $\eta_{s,t}$ for small enough $\abs{t-s}$. Unfortunately, we can not give this guarantee a priori: If the control $X$ of \eqref{eq:RDE} is $\alpha$-Hölder continuous, it is reasonable to assume that $\abs{\mu_{s,t} x - x}\lesssim \abs{t-s}^\alpha$. Taking the $N$-th dyadic partition of $[s,t]$, it then follows that
\begin{equation*}
    \abs{\bigcirc_{i=1}^{2^N} \mu_{t_{i-1},t_i} x-x} \lesssim \sum_{i=1}^{2^N} 2^{-N\alpha}\abs{t-s}^\alpha \to\infty  
\end{equation*}
as $N\to\infty$, so one really needs to invoke the almost sewing properties of the almost-flow $\eta$. We will show in subsection \ref{subsec:BanachspaceFlows} that these assumptions indeed suffice to do the sewing in any open set $U$, provided that $\abs{t-s}$ is small enough. In subsection \ref{subsec:ManifoldFlows}, we will then use this to conclude that we can sew any flow on the manifold.

We should note that we will be working in a finite-dimensional setting. This has the advantage, that any closed ball is compact, so by showing that $\abs{\bigcirc_{i=1}^{2^N} \mu_{t_{i-1},t_i}x-x}\le C$ for some $C\ge 0$, we get that the sewing procedure takes place in some compact set, allowing us to only assume local bounds. In an infinite-dimensional setting, we would need to assume global bounds on the flows.

Throughout this section, we will use the following notation:

\begin{itemize}
    \item The simplex is denoted by $\Delta_T := \{(s,t)\in [0,T]^2~\vert~ s\le t\}$.
    \item The diagonal is denoted by $\diag_T := \{(t,t)\in [0,T]^2\}$.
    \item The open ball is denoted by $B_x(r) := \{y\in\RR^d~\vert~\abs{x-y} < r\}$.
\end{itemize}


\subsection{Local flows in Banach spaces}\label{subsec:BanachspaceFlows}

Throughout this section, let us fix an open set $U\subset\RR^d$ and a time $T>0$. It is unreasonable to assume that there is a $T(U) > 0$, such that the almost-flow $\mu_{s,t}(x)$ is well-defined whenever $\abs{t-s}\le T(U)$. Indeed, if $x$ approaches the border of $U$, we would assume $T$ to go to zero. Thus, we will use a setting, where we allow $T$ to continuously depend on $x$. The standard setting for this (e.g. \cite{ApplicationsLieGroups}) is defined as follows:

\begin{defn}
    Let $O$ be an open set, such that $\diag_T\times U\subset O \subset \Delta_T\times U$. We call $O$ an admissible domain, if for any compact set $K\subset U$, there is a $0<T(K)\le T$, such that for each $x\in K$ and $\abs{t-s}\le T(K)$, $(s,t,x)\in O$, and for any $(s,t,x)\in O$ and $s\le u\le t$: $(s,u,x)\in O$.
\end{defn}

\noindent We can now define a local flow to be a map on an admissible domain, such that the classical flow property holds:

\begin{defn}
    Let $O$ be an admissible domain. We say $\eta:O\mapsto U$, $(s,t,x)\mapsto\eta_{s,t} x$ is a local flow, if for all $(s,t,x)\in O$ and $s\le u\le t$ such that $(u,t,\eta_{s,u}x)\in O$, it holds that
    \begin{equation*}
        \eta_{s,t}x = \eta_{u,t}\circ\eta_{s,u} x\,.
    \end{equation*}
\end{defn}

\noindent We want to find a notation of \emph{almost-flows}, such that \eqref{eq:flowSewing} converges for $N\to\infty$. For the classical sewing techniques (originally introduced in \cite{gubinelli04}, \cite{Feyel2006}), we can sew together two-parameter processes $A_{s,t}$, as long as the following coherence property holds:
\begin{equation*}
    \abs{A_{s,t}-(A_{s,u}+A_{u,t})} \lesssim \abs{t-s}^{1+\epsilon}
\end{equation*}
for some $\epsilon>0$ and all $s\le u\le t$. In a more general setting, one can replace the plus with more general products, to arrive at Terry Lyons's notion of almost-multiplicative functionals \cite{originalArticleRP}:
\begin{equation*}
    \norm{X_{s,t} - X_{s,u}\star X_{u,t}} \le \abs{t-s}^{1+\epsilon}\,,
\end{equation*}
for some appropriate norm. On the surface, this looks precisely like the result we need, if one replaces $\star$ with the composition product $\circ$. However, as discovered in \cite{bailleul15}, one needs to be a bit more careful with flows than with multiplicative functionals: If we are given a composition of flows $\mu_{u,t}\circ\mu_{s,u}$ and replace $\mu_{s,u}$ with $\mu_{u,\tilde u}\circ\mu_{s,\tilde u}$, it in general does not hold true that $\abs{(\mu_{s,u} x - \mu_{\tilde u,u}\circ\mu_{s,\tilde u} x)}$ being small implies that $\abs{\mu_{u,t}\circ(\mu_{s,u} x - \mu_{\tilde u,u}\circ\mu_{s,\tilde u} x)}$ is small. Thus, we need additional Lipschitz assumptions on our almost-flow $\mu$:

\begin{defn}
    We say $\mu:O\mapsto U$, $(s,t,x)\mapsto\mu_{s,t} x$ is a local almost-flow, if for all compact sets $K\subset U$ we have

    \begin{itemize}
        \item Lipschitz continuity: There exists constants $L(K,s,t)$ with $L(K,s,s) = \lim_{\abs{t-s}\to 0}L(K,s,t) = 1$ such that for all $x,y\in K$, $\abs{t-s}\le T(K)$:
        \begin{equation*}
            \abs{\mu_{s,t} x-\mu_{s,t} y}\le L(K,s,t)\abs{x-y}
        \end{equation*}

        \item Hölder continuity: There is an $\alpha\in(0,1)$ (independent of $K$) and a constant $B(K)$ such that for all $x\in K$, $\abs{t-s}\le T(K)$:
        \begin{equation*}
            \abs{\mu_{s,t} x-x} \le B(K)\abs{t-s}^\alpha\,.
        \end{equation*}
        \item almost-flow property: There is an $\epsilon>0$ (independent of $K$) and a constant $C(K)$ such that for all $x, y\in K$, $\abs{t-s}\le T(K)$ and $(u,t,\mu_{u,s}x), (u,t,\mu_{u,s} y)\in O$:
        \begin{equation*}
            \abs{\mu_{s,t}x-\mu_{u,t}\circ\mu_{s,u} x}\le C(K)\abs{t-s}^{1+\epsilon}\,.
        \end{equation*}
    \noindent as well as the joint Lipschitz-almost-flow property holds:
        \begin{equation*}
            \abs{(\mu_{s,t}-\mu_{u,t}\circ\mu_{s,u})(x-y)} \le C(K) \abs{t-s}^{1+\epsilon}\abs{x-y}
        \end{equation*}
    \end{itemize}
\end{defn}

\noindent We fix a local almost-flow $\mu$. Let $P_k([s,t]) := \left\{[t_i,t_{i+1}]~\middle\vert~ t_i = s + 2^{-k} i (t-s), i= 0,\dots,2^{k}-1\right\}$ be the $k$-th dyadic partition of $[s,t]$ and let
\begin{equation*}
    \mu_{s,t}^k := \bigcirc_{[t_i,t_{i+1}]\in P_k([s,t])}~\mu_{t_i,t_{i+1}}
\end{equation*}
be the sewn-together flow under $P_k([s,t])$. As mentioned above, it is a priori not clear that $\abs{\mu^k_{s,t} x-x}$ is even bounded. The next lemma shows, that the almost-flow property guarantees that:

\begin{lem}\label{lem:flowTechnical1}
    Let $K\subset O$ be a compact set, and $x\in K$ be an inner point. Chose $R>0$ in such a way, that $B_x(R)\subset K$. Further let $0<r<R, \hat c>0$ and set $T(r,\hat c) = \min\left(\left(\frac{R-r}{\hat c}\right)^{\frac 1\alpha},T(K)\right)$. Then, there is $\hat c, c_1,c_2, c_3 > 0$ such that for all $y\in B_x(r)$, $\abs{t-s}\le T(r,\hat c)$ we have that $\mu^k_{s,t} y$ is well-defined and that
    \begin{align}
        \abs{\mu^k_{s,t} y-\mu_{s,t} y} &\le c_1\abs{t-s}^{1+\epsilon}\label{ineq:mu_k1} \\
        \abs{\mu^k_{s,t} y-y} &\le c_2\abs{t-s}^{\alpha}\label{ineq:mu_k2}\\
        \abs{(\mu^k_{s,t} -\mu_{s,t})(x-y)}&\le c_3\abs{t-s}^{ 1+\epsilon}\abs{x-y}\,.\label{ineq:mu_k3}
    \end{align}
\end{lem}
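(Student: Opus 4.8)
The strategy is an induction on the dyadic level $k$, estimating the three quantities simultaneously. The base case $k=0$ is trivial, since then $\mu^0_{s,t}=\mu_{s,t}$ and all three left-hand sides vanish. For the induction step I would split the $(k+1)$-st dyadic partition of $[s,t]$ at the midpoint $m=\tfrac{s+t}{2}$, writing $\mu^{k+1}_{s,t}=\mu^{k}_{m,t}\circ\mu^{k}_{s,m}$, and then interpolate through the three ``comparison'' maps $\mu_{m,t}\circ\mu^{k}_{s,m}$ and $\mu_{m,t}\circ\mu_{s,m}$. Concretely, for $y$ in the relevant ball I would decompose
\begin{equation*}
    \mu^{k+1}_{s,t}y-\mu_{s,t}y
    = \bigl(\mu^{k}_{m,t}-\mu_{m,t}\bigr)\bigl(\mu^{k}_{s,m}y\bigr)
    + \mu_{m,t}\bigl(\mu^{k}_{s,m}y\bigr)-\mu_{m,t}\bigl(\mu_{s,m}y\bigr)
    + \bigl(\mu_{m,t}\circ\mu_{s,m}-\mu_{s,t}\bigr)y\,.
\end{equation*}
The first term is controlled by the induction hypothesis \eqref{ineq:mu_k1} (applied on $[m,t]$, at the point $\mu^{k}_{s,m}y$), the second by the Lipschitz bound for $\mu_{m,t}$ together with \eqref{ineq:mu_k1} again on $[s,m]$, and the third is exactly the almost-flow property. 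Since $|t-m|=|m-s|=\tfrac12|t-s|$, each contribution carries a factor $2^{-(1+\epsilon)}$ (the Lipschitz constants $L$ contribute a factor tending to $1$, which is where the smallness of $T(r,\hat c)$ enters), so summing the geometric series $\sum_k 2^{-k\epsilon}$ closes the recursion and yields a $k$-independent constant $c_1$. The bound \eqref{ineq:mu_k2} then follows from \eqref{ineq:mu_k1} combined with the Hölder property $\abs{\mu_{s,t}y-y}\le B(K)\abs{t-s}^\alpha$, and \eqref{ineq:mu_k3} is obtained by the same midpoint decomposition but using the \emph{joint} Lipschitz-almost-flow bound for the third term and the joint bound plus ordinary Lipschitz continuity for the first two.

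The genuinely delicate point — and the reason the statement is phrased with the radii $r<R$ and the time $T(r,\hat c)$ — is \emph{well-definedness}: I must ensure that none of the intermediate points $\mu^{j}_{t_i,t_{i+1}}y$ produced during the sewing ever leaves $K$, so that $\mu$ may legitimately be applied to them and so that the local constants $L(K,\cdot,\cdot)$, $B(K)$, $C(K)$ are available. I would run this as part of the same induction: assuming $y\in B_x(r)$, the estimate \eqref{ineq:mu_k2} at level $k$ gives $\abs{\mu^{k}_{s,u}y-x}\le \abs{\mu^{k}_{s,u}y-y}+\abs{y-x}\le c_2\abs{u-s}^{\alpha}+r$, and choosing $\hat c$ at least $c_2$ (this is the circular-looking but harmless coupling of the constant $\hat c$ to $c_2$) together with $\abs{t-s}\le T(r,\hat c)\le\bigl(\tfrac{R-r}{\hat c}\bigr)^{1/\alpha}$ forces this to stay below $R$, i.e. inside $B_x(R)\subset K$. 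The same bound applied to the "left half" $[s,m]$ shows that $\mu^{k}_{s,m}y$ lands in a slightly larger ball still inside $K$, so that applying $\mu^{k}_{m,t}$ or $\mu_{m,t}$ to it is legitimate; one has to carry a small buffer between the working radius and $R$ throughout, which is exactly the role of $r<R$.

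I expect the main obstacle to be bookkeeping the constants so that the induction genuinely closes: one must fix $\hat c$ (hence $T(r,\hat c)$) and then produce $c_1,c_2,c_3$ in the right logical order — $c_2$ must be available to guarantee the points stay in $K$ before $c_1,c_3$ can even be discussed, yet $c_2$ is most naturally derived from $c_1$ via \eqref{ineq:mu_k1}. The clean way around this is to first prove a crude \emph{a priori} version of \eqref{ineq:mu_k2} with some provisional constant (using only the Hölder property of $\mu$ and a rough geometric-series bound on the defect, valid as long as the points stay in $K$, which one checks by a continuity/bootstrap argument in $k$), use that to pin down $\hat c$ and confinement to $K$, and only then run the sharp three-part induction above to upgrade the constants to the final $c_1,c_2,c_3$. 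A secondary technical nuisance is that $L(K,s,t)$ is only assumed to tend to $1$ as $\abs{t-s}\to0$, so one should shrink $T(K)$ (absorbing this into $T(r,\hat c)$ via the $\min$) to ensure $L(K,s,t)\le 2$, say, on the relevant time scale, so that the Lipschitz factors accumulated over $\log_2$-many dyadic levels stay bounded by a convergent product rather than blowing up.
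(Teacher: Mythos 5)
Your proposal is correct and follows essentially the same approach as the paper: an induction on the dyadic level with the same midpoint decomposition into three terms, the almost-flow property for the third term, Lipschitz continuity of $\mu_{u,t}$ for the middle term, and the inductive hypothesis for the first, while well-definedness is tracked via \eqref{ineq:mu_k2} and a suitably large $\hat c$. The ``crude a priori then upgrade'' step you propose to resolve the apparent $\hat c$--$c_1$--$c_2$ circularity is unnecessary: in the paper's simultaneous induction the constants are fixed in the order ($T$ small enough that $L$ is close to $1$, then $c_1$, then $c_2 = c_1 + B(K)$, then $\hat c$ large), and \eqref{ineq:mu_k2} at level $k-1$ is used to secure well-definedness at level $k$ before \eqref{ineq:mu_k1}--\eqref{ineq:mu_k3} are proved at level $k$, which closes the loop without a separate bootstrap.
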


\begin{proof}
    By the definition of an almost-flow, all of these properties are obvious for $k=1$. We show $k>1$ with induction: Set $u=\frac{t+s}{2}$ and decompose
    \begin{equation*}
        \mu^k_{s,t} y -\mu_{s,t} y = \underbrace{(\mu_{u,t}^{k-1}\mu_{s,u}^{k-1} y - \mu_{u,t}\mu_{s,u}^{k-1} y)}_{(I)} + \underbrace{(\mu_{u,t}\mu_{s,u}^{k-1} y- \mu_{u,t} \mu_{s,u} y)}_{(II)} + \underbrace{(\mu_{u,t}\mu_{s,u} y- \mu_{s,t} y)}_{(III)}\,.
    \end{equation*}
    By induction hypothesis, $\mu^{k-1}_{s,u} y$ is well-defined and \eqref{ineq:mu_k2} implies $\mu^{k-1}_{s,u} y\in B_x(r+c_2 (T(r,\hat c)/2)^\alpha)$. For large enough $\hat c$, it holds that \[
        r+c_2(T(r,\hat c)/2)^\alpha+\hat c(T(r,\hat c)/2)^\alpha = r+ \frac{c_2+\hat c}{2^\alpha}T(r,\hat c)^\alpha < R\,,
    \]
    which implies that $\abs{t-u}\le T(r+c_2(T(r,\hat c)/2)^\alpha,\hat c)$. It follows that $\mu^{k-1}_{u,t} (\mu^{k-1}_{s,u} y)$ is well-defined by induction hypothesis, and \eqref{ineq:mu_k1} gives us
    \begin{equation*}
        \abs{(I)} \le c_1\abs{t-s}^{1+\epsilon}2^{-1-\epsilon}\,.
    \end{equation*}
    Furthermore, $\mu_{s,u}^{k-1} y,\mu_{s,u}y\in B_x(r+c_2(T/2)^\alpha)\subset K$, so we can use the Lipschitz continuity of $\mu_{u,t}$ as well as $\abs{t-u}\le T(K)$ to show
    \begin{equation*}
        \abs{(II)} \le L(K,u,t)c_1\abs{t-s}^{1+\epsilon}2^{-1-\epsilon}\,,
    \end{equation*}
    where we used $\abs{\mu^{k-1} y_{s,u} -\mu_{s,u} y}\le c_1\abs{t-s}^{1+\epsilon}2^{-1-\epsilon}$ by \eqref{ineq:mu_k1}. Finally, the third term is simply bounded by the almost-flow property by
    \begin{equation*}
        \abs{(III)}\le C(K)\abs{t-s}^{1+\epsilon}\,.
    \end{equation*}
    Adding all three terms together, we get that
    \begin{equation*}
        \abs{\mu^k_{s,t} y -\mu_{s,t} y} \le (c_12^{-1-\epsilon} +L(K,u,t)c_1 2^{-1-\epsilon}+ C(K))\abs{t-s}^{1+\epsilon}\,.
    \end{equation*}
    As long as $\abs{t-s}$ is small enough, such that $L(K,u,t)$ is close enough to $1$, and for $c_1$ big enough (depending on $C(K)$), we get that $(c_12^{-1-\epsilon} +L(K,u,t)c_1 2^{-1-\epsilon} +C(K))\le c_1$, which shows that \eqref{ineq:mu_k1} holds for $k$. \eqref{ineq:mu_k3} follows analogously if one replaces the almost-flow property with the joint Lipschitz-almost-flow property and uses the Lipschitz property from Corollary \ref{cor:flowTechnical2} for $\mu^{k-1}_{s,u}$. \eqref{ineq:mu_k2} follows directly from 
    \begin{equation*}
        \abs{\mu_{s,t}^k y- y}\le \abs{\mu_{s,t}^k y-\mu_{s,t} y} + \abs{\mu_{s,t} y- y}\le c_2\abs{t-s}^\alpha 
    \end{equation*}
    for $c_2\ge c_1+B(K)$.
\end{proof}

\noindent It should be noted that our $T(r, \hat c)$ only really depends on the distance between $B_x(r)$ and $K^c$. So we can get \eqref{ineq:mu_k1}-\eqref{ineq:mu_k3} for all of $y\in K$ as well as all $\abs{t-s}\le \hat T(K)$ for a uniformly chosen $\hat T(K)$, by choosing a $\delta>0$ and a $\delta$-fattening of $K$, that is, a compact set $\tilde K\supset \{x\in U~\vert~ dist(x,K)\le\delta\}$, such that $\tilde K\subset U$ and applying the above lemma to $\tilde K$.

A simple corollary of Lemma \ref{lem:flowTechnical1} is that $\mu^k_{s,t}$ is Lipschitz continuous with a Lipschitz constant independent of $k$:

\begin{cor}\label{cor:flowTechnical2}
    For all $k\ge 1$ and $y, z\in B_x(r),\abs{t-s}\le T(r,\hat c)$ as above, we have that $\mu_{s,t}^k$ is Lipschitz continuous (on $B_x(r)$) with
    \begin{equation*}
        \abs{\mu_{s,t}^k(z-y)}\le (L(K,s,t)+c_3\abs{t-s}^{1+\epsilon})\abs{z-y}\,.
    \end{equation*}
\end{cor}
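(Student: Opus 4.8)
The plan is to read the estimate straight off Lemma \ref{lem:flowTechnical1} by splitting $\mu^k_{s,t}$ into the almost-flow $\mu_{s,t}$ and the error $\mu^k_{s,t}-\mu_{s,t}$, each of which is already under control. Concretely, fix $k\ge 1$, points $y,z\in B_x(r)$ and $\abs{t-s}\le T(r,\hat c)$; by Lemma \ref{lem:flowTechnical1} both $\mu^k_{s,t}y$ and $\mu^k_{s,t}z$ are well-defined, and I would simply write
\[
    \mu^k_{s,t}z-\mu^k_{s,t}y = \big[(\mu^k_{s,t}-\mu_{s,t})z-(\mu^k_{s,t}-\mu_{s,t})y\big] + \big[\mu_{s,t}z-\mu_{s,t}y\big]\,.
\]

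For the second bracket, note that $B_x(r)\subset B_x(R)\subset K$ and $T(r,\hat c)\le T(K)$, so the Lipschitz clause in the definition of a local almost-flow applies to $\mu_{s,t}$ on $K$ and bounds it by $L(K,s,t)\abs{z-y}$. The first bracket is precisely the increment $(\mu^k_{s,t}-\mu_{s,t})(z-y)$ in the notation of the paper, so \eqref{ineq:mu_k3} of Lemma \ref{lem:flowTechnical1} bounds it by $c_3\abs{t-s}^{1+\epsilon}\abs{z-y}$. Adding the two estimates gives the claimed inequality, with Lipschitz constant $L(K,s,t)+c_3\abs{t-s}^{1+\epsilon}$, which indeed tends to $L(K,s,s)=1$ as $\abs{t-s}\to 0$.

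The only point requiring care — and the reason this is phrased as a corollary rather than a triviality — is the logical order: \eqref{ineq:mu_k3} has to be invoked for an arbitrary pair $y,z\in B_x(r)$, while the inductive proof of Lemma \ref{lem:flowTechnical1} uses this corollary at level $k-1$ to control the inner map $\mu^{k-1}_{s,u}$. Thus \eqref{ineq:mu_k3} and this corollary are really established by one joint induction on $k$: the base case $k=1$ is exactly the joint Lipschitz–almost-flow property from the definition, and the inductive step consists of the three-term decomposition already carried out for \eqref{ineq:mu_k1}--\eqref{ineq:mu_k3} together with the two-term splitting above — the center $x$ plays no distinguished role in the \eqref{ineq:mu_k3}-estimate, so it goes through verbatim for a general pair. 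Beyond keeping track that $c_3$ stays independent of $k$ (already arranged in Lemma \ref{lem:flowTechnical1}), no further work is needed, so I expect no genuine obstacle here.
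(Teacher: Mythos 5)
Your splitting $\mu^k_{s,t} = (\mu^k_{s,t}-\mu_{s,t}) + \mu_{s,t}$ and the resulting two-term bound via \eqref{ineq:mu_k3} and the Lipschitz clause of the almost-flow definition is exactly the paper's one-line proof. Your further observation — that the corollary and \eqref{ineq:mu_k3} must really be established by a joint induction on $k$ (since Lemma \ref{lem:flowTechnical1} at level $k$ invokes the corollary at level $k-1$), and that \eqref{ineq:mu_k3} has to be read as holding for an arbitrary pair $y,z\in B_x(r)$ rather than only for the fixed center $x$ — correctly spells out a logical dependency the paper leaves implicit.
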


\begin{proof}
    This follows directly by using $\mu^k_{s,t} = (\mu^k_{s,t}-\mu_{s,t}) +\mu_{s,t}$.
\end{proof}
\noindent Let $\Delta_n := \{kT/2^N ~\vert~ k=0,\dots,2^N\}$ (where we identify $\{s= t_1\le\dots\le t_N=t\}$ with $\{[t_i,t_{i+1}]~\vert~s=t_1\le\dots\le t_N=t, i=1,\dots, N-1\}$) be the n-th dyadic partition of $[0,T]$ and let $\Delta= \bigcup_n \Delta_n$ be the set of dyadic numbers. For $s,t\in\Delta_n$, let 
\[
\mu^{\Delta_n}_{s,t} = \bigcirc_{\substack{[u,v]\in P_n([0,T]) \\ ~[u,v]\subset[s,t]}} ~\mu_{u,v}
\]
be the sewn-together flow over $P_n([0,T])$ restricted to $[s,t]$. We claim that the statements of Lemma \ref{lem:flowTechnical1} and Corollary \ref{cor:flowTechnical2} still hold for $\mu_{s,t}^{\Delta_n}$:
\begin{lem}\label{lem:flowTechnical3}
    In the above setting, we have that $\mu^{\Delta_n}_{s,t} y$ is well-defined for all $n\ge 0$, $y\in B_x(r)$ and $\abs{t-s}\le T(r,\tilde c)$. For these $s,t,y$ as well as $z\in B_x(r)$, it further holds that
    \begin{align}
        \abs{\mu^{\Delta_n}_{s,t} y-\mu_{s,t} y}&\le \tilde c_1\abs{t-s}^{1+\epsilon} \label{ineq:muDeltaN1}\\
        \abs{\mu^{\Delta_n}_{s,t} y-y} &\le \tilde c_2\abs{t-s}^\alpha \label{ineq:muDeltaN2}\\
        \abs{\mu^{\Delta_n}_{s,t}(z-y)} &\le (L(K,s,t)+ \tilde c_3\abs{t-s}^{1+\epsilon})\abs{x-y}\,,\label{ineq:muDeltaN3}
    \end{align}
    for a new set of constants $\tilde c_1,\tilde c_2,\tilde c_3,\tilde c>0$.
\end{lem}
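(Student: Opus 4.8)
The plan is to run an induction that mirrors the proof of Lemma \ref{lem:flowTechnical1}, but with the induction parameter being the number $m := 2^{n}(t-s)/T$ of generation-$n$ intervals of $P_n([0,T])$ contained in $[s,t]$, rather than the depth of a dyadic subdivision of $[s,t]$ itself. The crucial point is the choice of the split point at each step: given $m\ge 2$, I pick $u\in\Delta_n$ cutting $[s,t]$ into two subintervals containing $m_1=\lfloor m/2\rfloor$ and $m_2=\lceil m/2\rceil$ generation-$n$ intervals. Then $1\le m_1\le m_2<m$, both subintervals lie in $\Delta_n$, and, setting $\theta=(u-s)/(t-s)=m_1/m$, one has $\theta\in[\tfrac13,\tfrac12]$ and $1-\theta=m_2/m\in[\tfrac12,\tfrac23]$ (the extreme case being $m=3$). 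Moreover, by the very definition of the sewn flow, $\mu^{\Delta_n}_{s,t}=\mu^{\Delta_n}_{u,t}\circ\mu^{\Delta_n}_{s,u}$.

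The base case $m=1$ is immediate: there $\mu^{\Delta_n}_{s,t}=\mu_{s,t}$, so \eqref{ineq:muDeltaN1} is trivial, \eqref{ineq:muDeltaN2} is the Hölder assumption, and the Lipschitz-type bound is the Lipschitz assumption. For the inductive step with $m\ge2$, I decompose, exactly as in Lemma \ref{lem:flowTechnical1},
\[
\mu^{\Delta_n}_{s,t}y-\mu_{s,t}y=(\mu^{\Delta_n}_{u,t}\mu^{\Delta_n}_{s,u}y-\mu_{u,t}\mu^{\Delta_n}_{s,u}y)+(\mu_{u,t}\mu^{\Delta_n}_{s,u}y-\mu_{u,t}\mu_{s,u}y)+(\mu_{u,t}\mu_{s,u}y-\mu_{s,t}y),
\]
bound the first summand by $\tilde c_1|t-u|^{1+\epsilon}$ via the inductive hypothesis \eqref{ineq:muDeltaN1} on $[u,t]$ (legitimate since $m_2<m$), the second by $L(K,u,t)\,\tilde c_1|u-s|^{1+\epsilon}$ via the Lipschitz property of $\mu_{u,t}$ and the inductive hypothesis \eqref{ineq:muDeltaN1} on $[s,u]$, and the third by $C(K)|t-s|^{1+\epsilon}$ via the almost-flow property. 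Since $x\mapsto x^{1+\epsilon}$ is strictly convex and vanishes at $0$, for $\theta$ in the range above one has $\theta^{1+\epsilon}+(1-\theta)^{1+\epsilon}\le q_*:=(\tfrac13)^{1+\epsilon}+(\tfrac23)^{1+\epsilon}<1$, uniformly over all splits in the recursion. Hence for $|t-s|\le T(r,\tilde c)$ small enough that $L(K,u,t)$ is sufficiently close to $1$, the three terms sum to at most $\big(\tilde c_1 q_*+C(K)\big)|t-s|^{1+\epsilon}$ up to an arbitrarily small perturbation, and choosing $\tilde c_1\ge C(K)/(1-q_*)$ (after absorbing the perturbation) closes the induction for \eqref{ineq:muDeltaN1}. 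Then \eqref{ineq:muDeltaN2} follows from \eqref{ineq:muDeltaN1} together with the Hölder assumption, with $\tilde c_2\ge\tilde c_1 T(r,\tilde c)^{1+\epsilon-\alpha}+B(K)$, exactly as at the end of Lemma \ref{lem:flowTechnical1}. The Lipschitz-type estimate $|(\mu^{\Delta_n}_{s,t}-\mu_{s,t})(z-y)|\le\tilde c_3|t-s|^{1+\epsilon}|z-y|$ is carried along the same induction, replacing the almost-flow property by the joint Lipschitz-almost-flow property and using the Lipschitz bound for $\mu^{\Delta_n}_{s,u}$ (the Corollary \ref{cor:flowTechnical2} analogue, established simultaneously); finally \eqref{ineq:muDeltaN3} follows by writing $\mu^{\Delta_n}_{s,t}=(\mu^{\Delta_n}_{s,t}-\mu_{s,t})+\mu_{s,t}$ and adding the Lipschitz bound for $\mu_{s,t}$.

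Two pieces of bookkeeping accompany this, both already present in Lemma \ref{lem:flowTechnical1}. First, well-definedness and the domain issue: applying the inductive hypothesis on $[u,t]$ requires $\mu^{\Delta_n}_{s,u}y$ to remain in a ball around $x$ on which that hypothesis is valid; by \eqref{ineq:muDeltaN2} one has $\mu^{\Delta_n}_{s,u}y\in B_x(r+\tilde c_2|u-s|^\alpha)$, and enlarging $\tilde c$ and shrinking $T(r,\tilde c)$ so that $r+\tilde c_2 T(r,\tilde c)^\alpha+\tilde c\,T(r,\tilde c)^\alpha<R$, together with the $\delta$-fattening remark following Lemma \ref{lem:flowTechnical1} (run the whole argument on a slightly larger compact set), makes the hypothesis applicable at the shifted base point. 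Second, although $m=2^{n}(t-s)/T$ depends on $n$, all the constants $q_*,\tilde c_1,\tilde c_2,\tilde c_3,\tilde c$ produced above depend only on $\epsilon,\alpha$ and on $B(K),C(K)$ and the Lipschitz constants, not on $n$ or $m$; the induction is over $m\in\NN$ and therefore covers every dyadic level $n$ at once.

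I expect the main obstacle to be exactly the choice of split point. Peeling off a single generation-$n$ interval, or splitting $[s,t]$ into a power-of-two block plus a remainder, yields a ratio $\theta$ that degenerates to $0$ as $m\to\infty$, and then $\theta^{1+\epsilon}+(1-\theta)^{1+\epsilon}\to1$, so the fixed-point inequality for $\tilde c_1$ collapses and the constants would blow up with $n$. Insisting on the near-balanced split $m_1=\lfloor m/2\rfloor$, $m_2=\lceil m/2\rceil$ pins $\theta$ to $[\tfrac13,\tfrac23]$ and keeps the contraction factor $q_*$ uniformly below $1$ throughout the entire recursion tree, which is precisely what produces $n$-independent constants. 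Everything else reduces to the estimates of Lemma \ref{lem:flowTechnical1} and the accompanying domain/fattening bookkeeping.
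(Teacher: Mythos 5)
Your proof is correct, but it takes a genuinely different route from the paper. The paper's proof decomposes $\mu^{\Delta_n}_{s,t}$ according to the binary expansion of the number $m=2^n(t-s)/T$ of generation-$n$ cells in $[s,t]$: it writes $\mu^{\Delta_n}_{s,t}=\bigcirc_{k=1}^K\tilde\mu_{u_k,v_k}$ where each $\tilde\mu_{u_k,v_k}$ is a genuinely dyadic block (to which Lemma~\ref{lem:flowTechnical1} applies) and the block lengths satisfy $\abs{u_k-v_k}\le\abs{t-s}2^{-(K-k)}$; it then inducts on the number $K$ of blocks, peeling off the largest (and last) block, whose length exceeds $\abs{t-s}/2$, so the remaining part has length $\le\abs{t-s}/2$ and the contraction factor in the fixed-point inequality for $\tilde c_1$ is $2^{-(1+\epsilon)}$. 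This approach uses Lemma~\ref{lem:flowTechnical1} as an external input for the dyadic-block estimates. Your approach instead reruns the two-piece splitting argument of Lemma~\ref{lem:flowTechnical1} directly, with the single new idea being the near-balanced dyadic split $m_1=\lfloor m/2\rfloor$, $m_2=\lceil m/2\rceil$, so that $\theta\in[1/3,1/2]$ and the contraction factor $q_*=(1/3)^{1+\epsilon}+(2/3)^{1+\epsilon}<1$ is uniform over the entire recursion tree; the induction on $m$ then covers every dyadic level at once. What each buys: the paper's binary decomposition gives a sharper contraction constant ($2^{-(1+\epsilon)}$ vs.\ your $q_*$), while your argument is self-contained (it does not rely on Lemma~\ref{lem:flowTechnical1} at all — the base case $m=1$ is just $\mu_{s,t}$ itself — so it in fact subsumes Lemma~\ref{lem:flowTechnical1} as a special case) and avoids the binary-expansion bookkeeping. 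Your diagnosis of why unbalanced splits fail is also correct and is precisely the pitfall both proofs are designed to avoid.

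One place where you should be slightly more careful is the domain bookkeeping around the shifted base point $\mu^{\Delta_n}_{s,u}y$. In Lemma~\ref{lem:flowTechnical1} the nesting between radius growth and time decay is exact because the time halves at each level; in your scheme the time shrinks only by a factor between $1/2$ and $2/3$, so the condition you need is of the form $r+\tilde c_2\,T(r,\tilde c)^\alpha\le r'$ with $(2/3)T(r,\tilde c)\le T(r',\tilde c)$, which unwinds to $\tilde c\ge\tilde c_2/(1-(2/3)^\alpha)$ rather than the factor-$2^\alpha$ condition in Lemma~\ref{lem:flowTechnical1}. This is a minor constant change, but since $\tilde c_2$ depends on $\tilde c_1$ and $\tilde c$ on $\tilde c_2$, you should fix the constants in the order $\tilde c_1\to\tilde c_2\to\tilde c$ (which your write-up implicitly does). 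With that caveat the argument closes cleanly.
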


\begin{proof}
    It holds that either $\mu_{s,t}^{\Delta_n}$ is a dyadic partition of $[s,t]$, or we can write it as $\mu^{\Delta_n}_{s,t} = \bigcirc_{k = 1}^K \tilde\mu_{u_k,v_k}$, where $\tilde\mu_{u_k,v_k}$ is a dyadic partition of $[u_k,v_k]$ and $\abs{u_k-v_k}\le \abs{t-s}2^{-(K-k)}$. If $\mu_{s,t}^{\Delta_n}$ is already the sewn-together flow over a dyadic partition, there is nothing to prove, so we assume it is not. It then holds that
    \begin{align*}
        \bigcirc_{k = 1}^K \tilde \mu_{u_k,v_k}y -\mu_{s,t}y = \underbrace{(\tilde\mu_{u_K,t}-\mu_{u_K,t})(\bigcirc_{k=1}^{K-1}\tilde \mu_{u_k,v_k} y)}_{(I)} &+ \underbrace{\mu_{u_K,t}(\bigcirc_{k=1}^{K-1}\tilde \mu_{u_k,v_k}-\mu_{s,v_{K-1}})}_{(II)}y \\
        &+ \underbrace{(\mu_{u_K,t}\circ\mu_{s,v_{K-1}} - \mu_{s,t})}_{(III)}\,.
    \end{align*}   
    We use induction over $K$. By induction hypothesis and \eqref{ineq:muDeltaN2}, we have that $\bigcirc_{k=1}^{K-1}\tilde \mu_{u_k,v_k} y \in B_x\left(r+\tilde c_2 \left(\frac{T(r,\tilde c)}2\right)^\alpha\right)$. If $\tilde c$ is large enough, such that
    \[
        r+ \hat c T(r,\tilde c)^\alpha + \tilde c_2\left(\frac{T(r,\tilde c)}{2}\right)^\alpha < R\,.
    \]
    Lemma \ref{lem:flowTechnical1} (recall that $\tilde \mu_{u_K,t}$ is dyadic) gives us that $\bigcirc_{k = 1}^K \tilde \mu_{u_k,v_k}y$ is well-defined, and that
    \[
        \abs{(I)} \le c_1 \abs{t-s}^{1+\epsilon}\,.
    \]
    For $(II)$, we note that one can use the induction hypothesis and the Lipschitz continuity of $\mu$ to get
    \begin{equation*}
        \abs{(II)}\le L(K,u_K,t)\tilde c_1\left(\frac{\abs{t-s}}{2}\right)^{1+\epsilon}\,.
    \end{equation*}
    We further get
    \[
        \abs{(III)}\le C(K)\abs{t-s}^{1+\epsilon}\,,
    \]
    as before. Gathering all three of the inequalities, we get
    \[
    \abs{\bigcirc_{k=1}^K\tilde\mu_{u_k,v_k} y-\mu_{s,t} y}\le \left(c_1 + L(K,s,t)\tilde c_12^{-1-\epsilon} + C(K)\right)\abs{t-s}^{1+\epsilon}
    \]
    For large enough $\tilde c_1$, it follows that $\left(c_1 + L(K,s,t)\tilde c_12^{-1-\epsilon} + C(K)\right)\le \tilde c_1$, showing \eqref{ineq:muDeltaN1}. \eqref{ineq:muDeltaN2} and \eqref{ineq:muDeltaN3} follow as before.
\end{proof}

\noindent With these technical lemmas, we can now show that local almost-flows generate flows:

\begin{lem}[sewing lemma for local almost-flows]\label{lem:sewing}
    Let $\mu_{s,t}$ be a local almost-flow. Then there exists an admissible domain $\hat O\subset O$ and a unique flow $\eta$ on $\hat O$, such that for all $x\in K$, $\abs{t-s}\le \hat T(K)$, we have
    \begin{equation}\label{ineq:uniqueness1}
        \abs{\mu_{s,t} x -\eta_{s,t} x}\le \hat C(K)\abs{t-s}^{1+\epsilon}\,,
    \end{equation}
    as well as
    \begin{align}
        \abs{\eta_{s,t} x- x}&\le \hat B(K) \abs{t-s}^\alpha \label{ineq:uniqueness2}\\
        \abs{\eta_{s,t}(x-y)}&\le \hat L(K,s,t)\abs{x-y}\label{ineq:uniqueness3}\,.
    \end{align}
\end{lem}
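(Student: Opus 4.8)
\emph{Proof plan.} I would construct $\eta$ as the limit, along dyadic times, of the sewn flows $\mu^{\Delta_n}$, transport the uniform bounds of Lemma~\ref{lem:flowTechnical3} and Corollary~\ref{cor:flowTechnical2} through that limit, and extend the result continuously to an admissible domain. First I would fix a compact $K\subset U$ and, exactly as in the remark after Lemma~\ref{lem:flowTechnical1}, replace it by a compact $\delta$-fattening $\tilde K$ with $K\subset\tilde K^\circ\subset\tilde K\subset U$, chosen (together with the radius $r$ and a large enough $\tilde c$) so that every intermediate point produced below stays in a ball on which the three conclusions of Lemma~\ref{lem:flowTechnical3} hold with the constants $\tilde c_1,\tilde c_2,\tilde c_3$ it produces for $\tilde K$. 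I then set $\hat T(K):=T(r,\tilde c)$ and shrink it once more so that $\Lambda:=\sup\{L(\tilde K,u,v):(u,v)\in\Delta_T,\ |v-u|\le\hat T(K)\}+\tilde c_3\hat T(K)^{1+\epsilon}<\infty$.

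Next, the Cauchy property on dyadic times. For dyadic $s\le t$ with $|t-s|\le\hat T(K)$, $x\in K$, and $n$ large enough that $s,t\in\Delta_n$, the interval $[s,t]$ is a union of $m=2^n(t-s)/T$ intervals $[t_i,t_{i+1}]$ of $P_n([0,T])$; refining $\Delta_n$ to $\Delta_{n+1}$ bisects each of them, and telescoping the composition gives
\begin{equation*}
\mu^{\Delta_{n+1}}_{s,t}x-\mu^{\Delta_n}_{s,t}x=\sum_{i=0}^{m-1}\Bigl(\mu^{\Delta_n}_{t_{i+1},t}\bigl(\mu^{\Delta_{n+1}}_{t_i,t_{i+1}}z_i\bigr)-\mu^{\Delta_n}_{t_{i+1},t}\bigl(\mu_{t_i,t_{i+1}}z_i\bigr)\Bigr),\qquad z_i:=\mu^{\Delta_{n+1}}_{s,t_i}x.
\end{equation*}
Each $\mu^{\Delta_n}_{t_{i+1},t}$ is Lipschitz with constant $\le\Lambda$ by \eqref{ineq:muDeltaN3}, and $\mu^{\Delta_{n+1}}_{t_i,t_{i+1}}=\mu_{w_i,t_{i+1}}\circ\mu_{t_i,w_i}$ is a single dyadic refinement, so $|\mu^{\Delta_{n+1}}_{t_i,t_{i+1}}z_i-\mu_{t_i,t_{i+1}}z_i|\le C(\tilde K)|t_{i+1}-t_i|^{1+\epsilon}$ by the almost-flow property; hence each summand is $\le\Lambda C(\tilde K)(|t-s|/m)^{1+\epsilon}$ and, summing the $m$ of them,
\begin{equation*}
\bigl|\mu^{\Delta_{n+1}}_{s,t}x-\mu^{\Delta_n}_{s,t}x\bigr|\le\Lambda C(\tilde K)\,T^\epsilon\,|t-s|\,2^{-n\epsilon}.
\end{equation*}
This is summable in $n$, so $\eta_{s,t}x:=\lim_{n\to\infty}\mu^{\Delta_n}_{s,t}x$ exists and lies in $\tilde K$ by \eqref{ineq:muDeltaN2}. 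Letting $n\to\infty$ in \eqref{ineq:muDeltaN1}--\eqref{ineq:muDeltaN3} yields \eqref{ineq:uniqueness1}, \eqref{ineq:uniqueness2} and a uniform Lipschitz bound for $\eta$ on dyadic times, with $\hat C(K)=\tilde c_1$, $\hat B(K)=\tilde c_2$, and $\hat L(K,s,t)$ of the form $L(\tilde K,s,t)+\tilde c_3|t-s|^{1+\epsilon}$. For dyadic $s\le u\le t$ and $n$ large, $u\in\Delta_n$ and $\mu^{\Delta_n}_{s,t}=\mu^{\Delta_n}_{u,t}\circ\mu^{\Delta_n}_{s,u}$ by construction, so $\eta_{s,t}=\eta_{u,t}\circ\eta_{s,u}$ in the limit; feeding this flow identity back into \eqref{ineq:uniqueness2} and the Lipschitz bound shows $(s,t)\mapsto\eta_{s,t}x$ is jointly $\alpha$-Hölder on dyadic times, uniformly for $x\in K$, hence extends continuously to all $(s,t)$ with $x\in K$, $|t-s|\le\hat T(K)$, and continuity of composition carries the three estimates and the flow identity to the extension (and, approximating the triple $(s,u,t)$ by dyadic ones, to non-dyadic intermediate times $u$).

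It remains to assemble $\hat O$ and prove uniqueness. Taking a compact exhaustion $K_j\uparrow U$ and setting $\hat O:=O\cap\bigcup_j\{(s,t,x):x\in K_j^\circ,\ |t-s|<\hat T(K_j)\}$, one checks directly that $\hat O$ is an admissible domain contained in $O$ on which $\eta$ is a local flow satisfying \eqref{ineq:uniqueness1}--\eqref{ineq:uniqueness3}. If $\eta'$ is another flow (on some admissible domain) satisfying \eqref{ineq:uniqueness1}, then for dyadic $s\le t$ the flow property of $\eta$ and $\eta'$ lets one write $\eta_{s,t}x-\eta'_{s,t}x$ as a telescoping sum over $P_n([s,t])$ whose $i$-th term is bounded by the Lipschitz constant of $\eta'_{t_{i+1},t}$ (again $\le\Lambda$) times $|\eta_{t_i,t_{i+1}}z-\eta'_{t_i,t_{i+1}}z|\le|\eta_{t_i,t_{i+1}}z-\mu_{t_i,t_{i+1}}z|+|\mu_{t_i,t_{i+1}}z-\eta'_{t_i,t_{i+1}}z|\le 2\hat C(\tilde K)|t_{i+1}-t_i|^{1+\epsilon}$; the total is $O(2^{-n\epsilon})\to0$, so $\eta=\eta'$ on dyadic times, hence everywhere by continuity.

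The one genuine danger is that the telescoping in the second paragraph compounds $m\approx2^n$ Lipschitz factors and diverges. It does not, precisely because every partial composition $\mu^{\Delta_n}_{u,v}$ appearing there is itself a sewn flow covered by Lemma~\ref{lem:flowTechnical3}, so \eqref{ineq:muDeltaN3} caps its Lipschitz constant by the $n$-independent quantity $\Lambda$ — this is exactly what the joint Lipschitz--almost-flow hypothesis in the definition of a local almost-flow buys us. Everything else is bookkeeping: choosing the fattening $\tilde K$, the radius $r$, the constant $\tilde c$ and then shrinking $\hat T(K)$ so that every intermediate point of every $\mu^{\Delta_n}$ remains inside the compact set on which Lemma~\ref{lem:flowTechnical3} is available, handled just as in the remark following Lemma~\ref{lem:flowTechnical1}.
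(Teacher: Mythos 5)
Your proposal is correct and follows essentially the same route as the paper: construct $\eta$ as the dyadic limit of $\mu^{\Delta_n}$, obtain the Cauchy property by telescoping between consecutive levels with the middle term controlled by the almost-flow property and the outer tail controlled uniformly via the sewn-flow Lipschitz bound \eqref{ineq:muDeltaN3} (the role of the joint Lipschitz--almost-flow hypothesis, as you correctly flag), then pass to limits for the estimates and the flow identity, extend by continuity, and prove uniqueness by another telescoping. The only cosmetic deviations are the orientation of the telescoping (you place the level-$n$ tail on the left, the paper puts the level-$(n+1)$ tail there) and the uniqueness step, where you compare $\eta$ to a competitor $\eta'$ directly using its Lipschitz bound \eqref{ineq:uniqueness3}, whereas the paper shows any such flow equals $\lim\mu^{\Delta_n}$ and observes (Remark~\ref{rem:UNiquenessAddition}) that \eqref{ineq:uniqueness3} is not actually needed for this.
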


\begin{proof}
    {\bf Existence:} Let $K$ be some compact set with an inner point $x$ and let $y\in B_x(r)$. For $\abs{t-s}\le T(r,\tilde c)$, we claim that $\mu^{\Delta n}_{s,t} y$ is a Cauchy sequence in $n$ as long as $s,t\in \Delta_m$ for some $m\ge 0$. Once we have shown this, we will set $\eta_{s,t} y = \lim_{n\to\infty} \mu^n_{s,t} y$.
    
    It holds that $\mu^{\Delta_n}_{s,t} = \bigcirc_{k=0}^K \mu_{u_k^n,v_k^n}$. Set $f_{u_k^n,v_k^n} := \mu_{u_{2k+1}^{n+1},v_{2k+1}^{n+1}}\mu_{u_{2k}^{n+1},v_{2k}^{n+1}}$. It follows that $\mu^{\Delta_{n+1}}_{s,t} = \bigcirc_{k=0}^K f_{u_k^n,v_k^n}$. We write
    \begin{equation*}
        \mu^{\Delta_{n+1}}_{s,t}-\mu^{\Delta_n}_{s,t} = \sum_{k=0}^K \bigcirc_{l=k+1}^{K} f_{u_l^n,v_l^n} \circ(f_{u_k^n,v_k^n}-\mu_{u_k^n,v_k^n}) \circ\bigcirc_{l=0}^{k-1} \mu_{u_l^n,v_l^n}\,.
    \end{equation*}
    We chose $m$ in such a way, that $2^{-m-1} T < \abs{t-s}\le 2^{-m}T$. It follows that we have $\le 2^{n-m}$ summands in the above sum. By enlargening $\tilde c$ (or shrinking $T(r,\tilde c)$) a final time, we conclude that the above term is well-defined and the almost-flow property together with \eqref{ineq:muDeltaN3} gives us
    \begin{align*}
        \abs{\mu^{\Delta_{n+1}}_{s,t}y -\mu^{\Delta_n}_{s,t} y} &\le 2^{n-m}(\tilde L(K,s,t)C(K) 2^{-n(1+\epsilon)}\abs{T}^{1+\epsilon})\\
        &\le 2^{-(n-m)\epsilon} 2^{-m(1+\epsilon)}\abs{T}^{1+\epsilon}\tilde L(K,s,t)C(K)\,,
    \end{align*}
    where $\tilde L(K,s,t) = (L(K,s,t) + \tilde c_3\abs{t-s}^{1+\epsilon})$ is the Lipschitz constant from \eqref{ineq:muDeltaN3}. It follows that the sequence is Cauchy and thus convergent in $\RR^d$. Further, we immediately get from $\eta-\mu = \sum_{n=m}^{\infty} (\mu^{\Delta_{n+1}}-\mu^{\Delta_n})+ (\mu^{\Delta_m}-\mu)$, that
    \begin{align}
        \abs{\eta_{s,t} x - x}\le \hat B(K) \abs{t-s}^\alpha \label{ineq:eta1}\\
        \abs{\eta_{s,t} (x-y)} \le \hat L(s,t,K)\abs{x-y} \label{ineq:eta2}\\
        \abs{\eta_{s,t}x -\mu_{s,t} x} \le \hat C(K) \abs{t-s}^{1+\epsilon}\label{ineq:eta3}
    \end{align}
    for all $x,y \in K\subset U$ compact, $\abs{t-s}\le \hat T(K)$ and $\hat C(K), \hat L(s,t,K), \hat B(K)$ a new set of constants. Let us now check that $\eta$ is indeed a flow on $\Delta$. For all $s,u,t\in \Delta_n$, $s\le u\le t$, it holds that $\mu^{\Delta_n}_{s,t} =\mu^{\Delta_n}_{u,t}\circ\mu^{\Delta_n}_{s,u}$ by construction. Thus, we can use the continuity of $\eta$ to show that for all $s,u,t\in\Delta$, $s\le u\le t$ with $\abs{t-s}\le\hat T(K)$:
    \begin{align*}
        \abs{\eta_{s,t}-\eta_{u,t}\eta_{s,u}} &\le \abs{\eta_{s,t} -\mu^{\Delta_n}_{s,t}} + \abs{\eta_{u,t}\eta_{s,u}-\mu^{\Delta_n}_{u,t}\mu^{\Delta_n}_{s,u}} + \abs{\mu_{s,t}^{\Delta_n}-\mu^{\Delta_n}_{u,t}\mu^{\Delta_n}_{s,u}}\\
        &\to 0
    \end{align*}
    as $n\to \infty$, where we used that
    \begin{equation*}
        \abs{\eta_{u,t}\eta_{s,u}-\mu^n_{u,t}\mu^n_{s,u}} \le \abs{\eta_{u,t}(\eta_{s,u}-\mu^n_{s,u})} + \abs{(\eta_{u,t}-\mu^n_{u,t})\mu^n_{s,u}}\to 0
    \end{equation*}
    as $n\to\infty$. It follows that $\eta$ is a flow on $\Delta$, and by continuity we can extend $\eta$ uniquely to a flow on $\Delta_T$.\\[\baselineskip]
    {\bf Uniqueness:} Assume $\eta$ is a flow on $\hat O$ fulfilling \eqref{ineq:uniqueness1}-\eqref{ineq:uniqueness3}. Then we can calculate for $s,t\in\Delta$:
    \begin{align*}
        \abs{\eta_{s,t} - \mu_{s,t}^{\Delta_n}} &\le \sum_{k=0}^K \abs{\mu^{\Delta_n}_{u_{k+1},t}\circ(\eta_{u_k,v_k}-\mu_{u_k,v_k}) \circ\eta_{s,v_{k-1}}} \\
        &\le 2^{-n\epsilon-m}\tilde L(s,t,K)\hat C(K) T^{1+\epsilon}\to 0
    \end{align*}
    as $n\to\infty$. This finishes the proof.
\end{proof}

\begin{rem}\label{rem:UNiquenessAddition}
    It should be noted, that we never used the Lipschitz property \eqref{ineq:uniqueness3} in the proof of uniqueness. \eqref{ineq:uniqueness1} and \eqref{ineq:uniqueness2} are enough to uniquely characterize $\eta$.
\end{rem}

\subsection{Almost-flows on manifolds}\label{subsec:ManifoldFlows}

We want to use our notation of local almost-flows to generate flows on a manifold $M$. To this end, we say that an almost-flow on $M$ is a map $\mu$ mapping some admissible domain in $\Delta_T\times M$ to $M$, such that for each coordinate chart $\phi:M\supset V\to U\subset\RR^d$, $\mu^\phi_{s,t} := \phi\circ\mu_{s,t}\circ\phi^{-1}$ is a local almost-flow on some admissible domain over $U$. The following proposition shows that this indeed generates a flow:

\begin{prop}\label{prop:sewingOnM}
    Let $\mu_{s,t} x$ defined on an admissible domain $\diag_T\times M\subset O \subset \Delta_T\times M$. Assume that for each coordinate chart $\phi:M\supset V \to U\subset \RR^d$, $\mu_{s,t}^\phi$ is an almost-flow on some open set $\diag_T\times U\subset O^\phi\subset \Delta_T\times U$. Let $\eta_{s,t}^\phi$ be the flow associated with $\mu^\phi_{s,t}$. Then $\eta_{s,t} := \phi^{-1}\eta_{s,t}$ is coordinate independent. (Up to the maximal time $\abs{t-s}\le T^\phi$, which might depend on the coordinate chart)
\end{prop}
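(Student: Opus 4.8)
The plan is to show that the two flows obtained from two overlapping charts agree on the overlap, using the uniqueness part of the sewing lemma (Lemma~\ref{lem:sewing}), and then to patch these local flows together into a single globally defined flow on $M$. So let $\phi:V\to U\subset\RR^d$ and $\psi:W\to U'\subset\RR^d$ be two coordinate charts, and work on the (open) overlap region $V\cap W\subset M$. On this overlap the transition map $\Theta:=\psi\circ\phi^{-1}$ is a diffeomorphism between open subsets of $\RR^d$. First I would observe that $\eta^\phi$ and $\eta^\psi$ are, by definition of $\eta$, related by $\phi^{-1}\eta^\phi_{s,t}\phi$ versus $\psi^{-1}\eta^\psi_{s,t}\psi$; what I actually need is that $\Theta\circ\eta^\phi_{s,t}\circ\Theta^{-1}$ coincides with $\eta^\psi_{s,t}$ on the appropriate domain, which is exactly the statement that $\eta_{s,t}$ is chart-independent. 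The key point is that $\Theta\circ\eta^\phi_{s,t}\circ\Theta^{-1}$ is \emph{itself} a flow (conjugating a flow by a diffeomorphism preserves the flow property $\eta_{s,t}=\eta_{u,t}\circ\eta_{s,u}$), and it is the flow built from the conjugated almost-flow $\Theta\circ\mu^\phi_{s,t}\circ\Theta^{-1}=\mu^\psi_{s,t}$ (this last identity holds because both equal $\psi\circ\mu_{s,t}\circ\psi^{-1}$, by definition of $\mu^\phi$ and $\mu^\psi$ via the common $M$-level map $\mu$).

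The substantive step is to verify that $\Theta\circ\eta^\phi_{s,t}\circ\Theta^{-1}$ satisfies the characterization \eqref{ineq:uniqueness1}--\eqref{ineq:uniqueness2} relative to the almost-flow $\mu^\psi$, so that by the uniqueness clause of Lemma~\ref{lem:sewing} (and Remark~\ref{rem:UNiquenessAddition}, which tells us \eqref{ineq:uniqueness3} is not needed for uniqueness) it must equal $\eta^\psi$. For this I would fix a compact set $K'\subset U'$, pull it back to the compact set $K:=\Theta^{-1}(K')\subset U$, and use that $\Theta$ is locally Lipschitz on a slightly fattened compact neighbourhood $\tilde K$ of $K$, say with Lipschitz constant $\Lambda$ and with $\Theta^{-1}$ Lipschitz constant $\Lambda'$ on the corresponding neighbourhood of $K'$. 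Then for $x'\in K'$, writing $x=\Theta^{-1}(x')$,
\begin{align*}
    \abs{\mu^\psi_{s,t} x' - \Theta\,\eta^\phi_{s,t}\,\Theta^{-1} x'}
    &= \abs{\Theta\,\mu^\phi_{s,t}\,\Theta^{-1} x' - \Theta\,\eta^\phi_{s,t}\,\Theta^{-1} x'}\\
    &\le \Lambda\,\abs{\mu^\phi_{s,t} x - \eta^\phi_{s,t} x}
    \le \Lambda\,\hat C(K)\abs{t-s}^{1+\epsilon}\,,
\end{align*}
using \eqref{ineq:uniqueness1} for $\mu^\phi$; and similarly $\abs{\Theta\,\eta^\phi_{s,t}\,\Theta^{-1} x' - x'}\le\Lambda\abs{\eta^\phi_{s,t} x - x}\le\Lambda\hat B(K)\abs{t-s}^\alpha$ from \eqref{ineq:uniqueness2}. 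One has to be slightly careful about domains: since $\mu^\psi$ is an almost-flow on an admissible domain, one first restricts to a small enough $\hat T(K')$ (and shrinks further if necessary so that the sewing iterates stay in $\tilde K$, using the Hölder bound \eqref{ineq:uniqueness2} exactly as in Lemma~\ref{lem:flowTechnical1}) to guarantee everything is defined; this is the $T^\phi$ appearing in the statement. With these two bounds in hand, uniqueness in Lemma~\ref{lem:sewing} forces $\Theta\circ\eta^\phi_{s,t}\circ\Theta^{-1}=\eta^\psi_{s,t}$ on a neighbourhood of the overlap, i.e. $\phi^{-1}\eta^\phi_{s,t}\phi=\psi^{-1}\eta^\psi_{s,t}\psi$ there, which is the claimed coordinate independence.

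Finally, since $M$ is covered by charts and the locally defined flows $\eta_{s,t}=\phi^{-1}\eta^\phi_{s,t}\phi$ agree pairwise on overlaps, they glue to a well-defined map $\eta$ on an admissible domain $\hat O\subset O$ in $\Delta_T\times M$; the flow identity $\eta_{s,t}=\eta_{u,t}\circ\eta_{s,u}$ and the estimates \eqref{ineq:uniqueness1}--\eqref{ineq:uniqueness3} hold on $M$ because they hold in every chart. I expect the main obstacle to be bookkeeping of the domains rather than any deep difficulty: one must check that the admissible-domain structure is compatible with conjugation by $\Theta$ and with the passage to a common refinement of the two chart domains, and that the "maximal time" $T^\phi$ can be chosen uniformly on compacts. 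The Lipschitz behaviour of the transition maps on compacts is automatic from smoothness, so the analytic content is light once the domain issues are handled carefully.
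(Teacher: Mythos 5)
Your proof is correct but takes a genuinely different route from the paper's. The paper's argument is shorter and does not touch the transition map at all: it observes that the dyadic approximants $\mu^{\Delta_k}_{s,t}x=\bigcirc_i \mu_{t_i,t_{i+1}}x$ are defined at the level of $M$ and hence automatically coordinate-independent, so $\eta^\phi_{s,t}x$ and $\eta^{\tilde\phi}_{s,t}x$ are limits of the \emph{same} $M$-valued sequence, and uniqueness of limits in the Hausdorff space $M$ forces them to coincide. You instead deploy the uniqueness clause of Lemma~\ref{lem:sewing}: you conjugate $\eta^\phi$ by the transition diffeomorphism $\Theta=\psi\circ\phi^{-1}$, check that $\Theta\circ\eta^\phi\circ\Theta^{-1}$ still satisfies \eqref{ineq:uniqueness1}--\eqref{ineq:uniqueness2} against $\mu^\psi=\Theta\circ\mu^\phi\circ\Theta^{-1}$ via local Lipschitz bounds on $\Theta$, and invoke Remark~\ref{rem:UNiquenessAddition} to drop \eqref{ineq:uniqueness3}. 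Your route is heavier but has a conceptual payoff: it uses only the characterizing estimates of the sewn flow and not the specific dyadic construction, so it would apply verbatim to any other construction of $\eta$ obeying the same Davie-type bounds; it is also more explicit about the domain bookkeeping, which the paper compresses into ``we can always choose a small enough $T$''.
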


\begin{proof}
    This follows immediately from the fact that $\mu_{s,t}^n x\in M$ is coordinate independent, as long as it is well-defined. Given two coordinate charts $\phi,\tilde\phi$, and a point $x\in V\cap\tilde V$, we can always choose a small enough $T$ such that Lemma \ref{lem:sewing} can be applied both to $(\mu^{\phi,\Delta_k}_{s,t}x)_{k\ge 0}$ and $(\mu^{\tilde\phi,\Delta_k}_{s,t}x)_{k\ge 0}$ for all $\abs{s-t}\le T$. By construction, $\mu^{\phi,\Delta_k}_{s,t}x = \mu^{\tilde\phi,\Delta_k}_{s,t}x$ for all $k\ge 0$. Since this sequence converges in the topology of $M$ we get auniqe limit and conclude that $\eta^\phi_{s,t} x = \eta^{\tilde\phi}_{s,t} x$. Thus, $\eta$ is a coordinate independent flow.
\end{proof}

\section{Preliminaries}\label{sec:prelim}

\subsection{Hopf and Lie algebras}

We use this section to recall the notion of Hopf algebras and the most commonly used Hopf algebras in rough path theory. We mainly follow the setting of \cite{curry20} and \cite{ferrucci22}, but also recommend \cite{GUBINELLI2010693},\cite{HK15} as well as classical references like \cite{sweedler69}, \cite{manchon2006hopf} or \cite{HopfAlgebrasAbe}.

In a nutshell, Rough paths live in the dual of a graded, connected, and commutative Hopf algebra, making the space the rough paths live in a graded, connected, cocommutative Hopf algebra. We start by recalling the notion of a graded vector space:

\begin{defn}
    A vector space $V$ is called a graded vector space, if there are finite-dimensional vector spaces $V^{(0)},V^{(1)},\dots$, such that $V = \bigoplus_{i=0}^\infty V^{(i)}$ is the set of finite linear combinations of vectors in $V^{(i)}$. For all $v\in V^{(i)}$, we write $\abs v = i$ and call $i$ the grade of $v$. 
\end{defn}

\noindent Note that we especially assume that all $V^{(i)}$ are finite-dimensional. This will immediately give us, that all truncated spaces $\bigoplus_{i\le N} V^{(i)}$ are finite-dimensional.

We can now define a Hopf algebra. The following definition comes from \cite{ferrucci22}:

\begin{defn}
    A graded connected Hopf algebra over $\RR$ is a graded vector space $\cH = \bigoplus_{i=1}^\infty \cH^{(i)}$ equipped with linear maps $m:\cH\otimes\cH\to\cH, m(x,y) =: x\cdot y$ (product), $\Delta:\cH\to\cH\otimes\cH$ (coproduct), $\one:\RR\to\cH$ (unit), $\epsilon:\cH\to\RR$ (counit) and $S:\cH\to\cH$ (antipode), such that the following holds for all $x,y,z\in\cH$:

    \begin{itemize}
        \item {\bf Connectedness:} $\cH^{(0)} = \RR$.
        \item {\bf Unit:} $\one:\RR\to\cH$ has the properties $\one(\lambda)\cdot x = \lambda x = x\cdot\one(\lambda)$, as well as $\Delta\circ\one = \one\otimes\one$. With a slight abuse of notation, we also write $\one = \one(1)\in \cH$. 

        \item {\bf Counit:} $m\circ(\epsilon\otimes\Id)\circ\Delta = \Id = m\circ(\Id\otimes\epsilon)\circ\Delta$ and $\epsilon(x\cdot y) = \epsilon(x)\epsilon(y)$.

        \item {\bf Associativity:} $(x\cdot y)\cdot z = x\cdot(y\cdot z)$.
        \item {\bf Coassociativity:} $(\Delta\otimes\Id)\Delta x = (\Id\otimes\Delta)\Delta x$.

        \item {\bf Compatibility:} $\Delta(x\cdot y) = \Delta x\cdot \Delta y$ and $\epsilon\circ\one = \Id_{\RR}$.

        \item   {\bf Antipode:} $m \circ(S\otimes \Id) \Delta = m \circ(\Id\otimes S) \Delta = \one\circ\epsilon$.

        \item {\bf Grading:} $\cH^{(i)}\cdot\cH^{(j)}\subset\cH^{(i+j)}$ and $\Delta\cH^{(i)}\subset\bigoplus_{(k+l = i)}\cH^{(k)}\otimes\cH^{(l)}$\,. The antipode fulfills $S(\cH^{(i)}) \subset\cH^{(i)}$.
    \end{itemize}
\end{defn}

\noindent Let us recall some basic results about Hopf algebras, which can all be found in \cite{sweedler69}: It always holds that $S$ is an anti-homomorphism, that is that for all $u,v\in\cH$, $S(u\cdot v) = S(v)\cdot S(u)$. If $\cH$ is further graded, we know that $\one\in\cH^{(0)} = \RR$ and can thus identify it with $1$. Furthermore, the grading implies that $\epsilon(v) = 0$ for all $v\in\cH^{(i)}$ for $i\ge 1$. Thus, the compatibility condition implies that $\epsilon$ has precisely the following form:
\begin{equation*}
    \epsilon(\lambda \one + R) = \lambda\,,   
\end{equation*}
where $R\in \bigoplus_{i\ge 1} \cH^{(i)}$.

The next concept we need is commutativity and its dual counterpart, cocommutativity. To this end, we define the switch operator $\tau:\cH\otimes\cH\to\cH\otimes\cH$ to be $v\otimes u\mapsto u\otimes v$ for all $u,v\in\cH$. We then call $\cH$:

\begin{itemize}
    \item {\bf Commutative}, if $m \circ\tau = m$.
    \item {\bf Cocommutative}, if $\tau\circ\Delta = \Delta$.
\end{itemize}

\noindent The level $n$ trunctation of $\cH$ is given by

\begin{equation*}
    \cH^n = \cH/\bigoplus_{k = n+1}^\infty\cH^{(k)}\,,
\end{equation*}

\noindent where we use that $\bigoplus_{k = n+1}^\infty\cH^{(k)}$ is an two-sided ideal of $\cH$. We identify the trunctation with $\cH^n = \bigoplus_{k = 0}^n\cH^{(k)}$.

\subsubsection{Graded dual of a graded Hopf algebra}

Let $\cH$ be a graded Hopf algebra. For each $i\in\NN$, we choose a basis $\{b^i_1,\dots, b^i_{k_i}\}$ of $\cH^{(i)}$. Since $\{b^i_k~\vert~ i\in\NN,k\le k_i\}$ is a countable Hamel-basis of $\cH$, its dual $\cH^*$ is given by all formal series
\[
\sum_{i\in\NN, k\le k_i} \beta(i,k) b_{k}^{i*}\,,
\]
where $b^{i*}_k (b^j_l) = \delta_{(i,k),(j,l)}$. We denote the dual action of $\cH^*$ on $\cH$ by $\scalar{\cdot,\cdot}: \cH^*\times\cH\to \RR$. Since $\cH$ is infinite-dimensional, $\cH^*$ does in general not have a Hopf algebra structure. However, its \emph{graded dual} (\cite{sweedler69}, page 231) of $\cH$, given by $(\cH^g, \star, \delta,\epsilon^*,\one^*,S^*)$:
\[
\cH^g := \bigoplus_{k=0}^\infty \cH^{(i)*} \subset\cH^*
\]
is a graded Hopf algebra with Hamel-basis $\{b^{i*}_k~\vert~ i\in\NN,k\le k_i\}$ and dual operations
\begin{align*}
    \scalar{x\star y, z} &:= \scalar{x\otimes y,\Delta z}\\
    \scalar{\one^*,z} &:= \epsilon(z)\\
    \scalar{\delta x, z\otimes y} &:= \scalar{x,z\cdot y}\\
    \epsilon^*(x) &:= \scalar{x,\one}\\
    \scalar{S^*(x),y} &:=\scalar{x,S(y)}\,.
\end{align*}
If $\cH$ is connected, it immediately follows that $\cH^g$ is connected as well. $\cH^g$ is cocommutative, if $\cH$ is commutative and $\cH^*$ is commutative, if $\cH$ is cocommutative.

\subsubsection{Group-like and primitive elements}

Rough paths live in a Lie group inside $\cH^*$, called the group-like elements or characters. These are given by all non-zero $g\in \cH^*$, such that $\scalar{g,\cdot}: \cH\to\RR$ is a homomorphism. If $g\in\cH^g$, this would imply that $\delta g = g\otimes g$, as $\delta$ is dual to $m$. However, in practice, this is only fulfilled by infinite series $g= \sum_{i,k}\beta(i,k) b_k^{i*}$, whereas $\cH^g$ only contains finite sums. Thus, for the three Hopf algebras introduced in Section \ref{sec:HopfALgebrasInRP}, the only group-like element would be given by $\one$.

We get around this issue by extending $\delta$ to a map
\[
\delta:\overline{\bigoplus_{i = 0}^\infty \cH^{(i)*}} \to \overline{\bigoplus_{i,j=0}^\infty \cH^{(i)*}\otimes\cH^{(j)*}}
\]
by setting
\[
\delta\left(\sum_{i,k} \beta(i,k)b_k^{i*}\right) := \sum_{i,k}\beta(i,k)\delta b_k^{i*}\,.
\]
Here, $\overline{\bigoplus_{i=0}^\infty \cH^{(i)*}}$ denotes the space of infinite series $\sum_{i,k} \beta(i,k)b_k^{i*}$. Thus, we say that the set of group-like elements is defined by
\[
    \cG := \{x\in \cH^* ~\vert~ \delta x= x\otimes x, x\neq 0\}\,.
\]
While $\cG$ is not a subset of the Hopf algebra $\cH^g$, we can always truncate it to an element in $\cH^n$ for some $n\ge 0$ to regain the Hopf algebra structure. This leads to the following definition:
\begin{defn}
    The set of level $n$ group-like elements is given by
    \begin{equation*}
        \cG^n := \{x\in\cH^{n*}~\vert~ \delta x = x\otimes x, x\neq 0\}\,,
    \end{equation*}
    where the identity $\delta x = x\otimes x$ holds in the truncated tensor product $(\cH^g\otimes\cH^g) /(\bigoplus_{i+j\ge n+1} \cH^{(i)*}\otimes\cH^{(j)*})$.
\end{defn}
\noindent $\cG^n$ is a group with the product $\star$ inherited from $\cH^{n*}$ and the inverse given by $x^{-1} = S(x)$. As it turns out, it is a Lie group with its Lie algebra being given by the set of \emph{primitive elements}, which is defined as follows:

\begin{defn}
    The set of primitive elements is given by
    \[
        \cP := \{x\in\cH^*~\vert~ \delta x = x\otimes 1+1\otimes x\}\,.
    \]
    The level $n$ primitives are given by
    \[
        \cP^n := \{x\in\cH^{n*} ~\vert~ \delta x = x\otimes 1+1\otimes x\}\,,
    \]
    where $\delta x = x\otimes 1+1\otimes x$ is again considered in $(\cH^g\otimes\cH^g) /(\bigoplus_{i+j\ge n+1} \cH^{(i)*}\otimes\cH^{(j)*})$.
\end{defn}

\noindent One can easily calculate that $\cP$ as well as $\cP^n$ are Lie algebras if we equip them with the commutator $[x,y] = x\star y-y\star x$ as a Lie bracket, see Section \ref{subsec:LieAlgebras} for details.

We will treat $\cG^n,\cP^n$ as subsets of $\bigoplus_{i=0}^n \cH^{(i)*}$, where they have a unique representative. For $g\in\cG^n, p\in\cP^n$ and an $m\le n$, it holds that $\pi^m g\in\cG^m$ and $\pi^mp \in\cP^m$, where $\pi^m$ denotes the truncation map $\cH^{n*}\to\cH^{m*}$, $\sum_{i=0}^n h_i \mapsto \sum_{i=0}^m h_i$. For the primitive elements, the inverse is also true: Given an $p\in \cH^{m*}$, $p = \sum_{i=0}^m p_i$, we have that $\sum_{i=0}^m p_i \in \cP^n$ is also a level $n$-primitive. This is in general not true for the group-like elements.

In fact, for primitive elements, an even stronger statement holds:

\begin{lem}\label{lem:pkIsPrimitive}
    Let $p\in\cP$ be a primitive element. For any $k\ge 1$, denote by $p^k$ the projection of $p$ onto $\cH^{(k)*}$. Then $p^k$ is a primitive element.
\end{lem}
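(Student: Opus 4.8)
The plan is to read off the primitivity relation $\delta p = p\otimes 1 + 1\otimes p$ one homogeneous degree at a time. The key point is that the coproduct $\delta$ of $\cH^g$, extended to infinite series as in the excerpt, respects the grading, so both sides of the relation decompose into total-degree components that must agree separately.

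First I would verify that $\delta$ is graded, i.e. $\delta\bigl(\cH^{(k)*}\bigr)\subseteq\bigoplus_{i+j=k}\cH^{(i)*}\otimes\cH^{(j)*}$. For $b\in\cH^{(k)*}$ and homogeneous $u\in\cH^{(i)}$, $v\in\cH^{(j)}$, the defining relation $\scalar{\delta b, u\otimes v} = \scalar{b, u\cdot v}$, together with $u\cdot v\in\cH^{(i+j)}$ and the fact that $b$ annihilates every homogeneous component other than $\cH^{(k)}$, shows $\scalar{\delta b, u\otimes v} = 0$ unless $i+j = k$; finite-dimensionality of the graded pieces then makes $\delta b$ a finite sum of terms in $\cH^{(i)*}\otimes\cH^{(j)*}$ with $i+j=k$. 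Writing $p = \sum_{m\ge 0} p^m$ with $p^m\in\cH^{(m)*}$ the projection of $p$ onto $\cH^{(m)*}$, the termwise definition of the extended $\delta$ gives $\delta p = \sum_{m\ge 0}\delta p^m$, with $\delta p^m$ sitting in the total-degree-$m$ part of $\overline{\bigoplus_{i,j}\cH^{(i)*}\otimes\cH^{(j)*}}$.

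On the other side, writing $1 = \one^*\in\cH^{(0)*}$, the term $p^m\otimes 1$ lives in $\cH^{(m)*}\otimes\cH^{(0)*}$ and $1\otimes p^m$ in $\cH^{(0)*}\otimes\cH^{(m)*}$, both of total degree $m$, so the total-degree-$m$ component of $p\otimes 1 + 1\otimes p$ is exactly $p^m\otimes 1 + 1\otimes p^m$. Matching the total-degree-$k$ components of $\delta p = p\otimes 1 + 1\otimes p$ then gives $\delta p^k = p^k\otimes 1 + 1\otimes p^k$, that is $p^k\in\cP$. (The degree-$0$ equation, using $\delta\one^* = \one^*\otimes\one^*$, forces $p^0 = 0$, which is why one restricts to $k\ge 1$.)

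I do not expect a genuine obstacle here: the argument is pure bookkeeping with the grading. The one point requiring a little care is the passage to the completed tensor product $\overline{\bigoplus_{i,j}\cH^{(i)*}\otimes\cH^{(j)*}}$ — one should check that the decomposition into total-degree components is still well-defined there and that $\delta$ applied to an infinite series lands in the homogeneous pieces one expects. Both are immediate from the termwise definition $\delta\bigl(\sum_{i,k}\beta(i,k)b_k^{i*}\bigr) = \sum_{i,k}\beta(i,k)\delta b_k^{i*}$ and the homogeneity of each $\delta b_k^{i*}$ established in the first step.
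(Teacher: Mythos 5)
Your proof is correct and uses essentially the same idea as the paper: decompose the primitivity identity $\delta p = p\otimes 1 + 1\otimes p$ into homogeneous total-degree components and match them term by term, relying on the fact that $\delta$ respects the grading. The paper phrases this by introducing remainder terms $R^k$ with $\Delta p^k = \one\otimes p^k + p^k\otimes\one + R^k$ and concluding $R^k=0$ from an intersection-of-subspaces argument, but that is the same degree-bookkeeping you carry out directly.
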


\begin{proof}
    Fix a $k\ge 1$ and let $R^k$ be defined by
    \[
        \Delta p^k = \one\otimes p^k + p^k\otimes \one + R^k\,.
    \]
    We claim that $R^k = 0$. Since $\cH^g$ is grade, we see that $R^k\in \bigoplus_{i+j = k} \cH^{(i)*}\otimes\cH^{(j)*}$. If we define $R^{\tilde k}$ analogously to $R^k$ for each $\tilde k\neq k$,
    \[
        \Delta p = \sum_{m\ge 1}\Delta p^m = 1\otimes p + p\otimes 1+ R^k + \sum_{\tilde k\neq k}R^{\tilde k}
    \]
    gives us that $R^k = - \sum_{\tilde k\neq k}R^{\tilde k} \in \bigoplus_{i+j\neq k} \cH^{(i)*} \otimes\cH^{(j)*}$. Since \\ $\left(\bigoplus_{i+j\neq k} \cH^{(i)*} \otimes\cH^{(j)*}\right) \cap \left(\bigoplus_{i+j= k} \cH^{(i)*} \otimes\cH^{(j)*}\right) = \{0\}$, we get that $R^k = 0$, showing the claim.
\end{proof}

\subsubsection{Exponential and Logarithmic map}

Since $\cG^n,\cP^n$ is a Lie group and its associated Lie algebra, there exists at least a local diffeomorphism $\exp:\cP^n\to\cG^n$ around the $0$-element in $\cP^n$. As it turns out, this is a global map simply given by the usual formula for the exponential map. In this subsection, we want to construct this map as well as its inverse $\log$.

A standard result gives us that for all $x\in\cG^n, y\in\cP^n$, $\epsilon(x) = 1$ and $\epsilon(y) = 0$. Recalling the form of $\epsilon$, we conclude that $\cG^n\subset\cH^{n*}_1$ and $\cP^n\subset\cH^{n*}_0$, where
\[
\cH^{n*}_a := \left\{a\one + \sum_{i=1}^n h_i ~\middle\vert~ h_i\in\cH^{(i*)}\right\}\,.
\]
Note that thanks to the truncation, $\cH^{(n*)}_0$ is nilpotent under the product $\star$, implying that all power series $\sum_{n\ge 0} \beta(n) h^{\star k}$ converge for $h\in\cH^{n*}_0$. We conclude that the following maps are well-defined:
\begin{align*}
    \exp_n: \cH^{n*}_0 &\to \cH^{n*}_1 \\
    h&\mapsto \sum_{k\ge 0} \frac{h^{\star k}}{k!} \\
    \log_n: \cH^{n*}_1&\to\cH_0^{n*} \\
    \one+h&\mapsto \sum_{k\ge 1} (-1)^{k+1}\frac{h^{\star k}}{k}\,.
\end{align*}
One easily checks that $\exp_n,\log_n$ are inverse to each other, turning them into diffeomorphisms with $\exp(0) = \one$. Further, the following result gives us that the Lie algebra $\cP^n$ is indeed associated with the Lie group $\cG^n$:

\begin{prop}
    It holds that $\exp_n\vert_{\cP^n}: \cP^n\to \cG^n$ is a diffeomorphism with inverse $\log_n\vert_{\cG^n}$.
\end{prop}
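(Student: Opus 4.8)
The plan is to lean entirely on the fact --- just established --- that $\exp_n\colon\cH^{n*}_0\to\cH^{n*}_1$ and $\log_n\colon\cH^{n*}_1\to\cH^{n*}_0$ are mutually inverse diffeomorphisms of finite-dimensional vector spaces, so that the proposition reduces to the two inclusions $\exp_n(\cP^n)\subset\cG^n$ and $\log_n(\cG^n)\subset\cP^n$. Granting these, $\cG^n=\exp_n(\log_n(\cG^n))\subset\exp_n(\cP^n)\subset\cG^n$ forces $\exp_n(\cP^n)=\cG^n$; and since $\cP^n$ is a linear subspace of $\cH^{n*}_0$, namely the kernel of the linear map $x\mapsto\delta x-x\otimes\one-\one\otimes x$, it is an embedded submanifold, whence the restriction of the diffeomorphism $\exp_n$ to $\cP^n$ is a diffeomorphism onto its image $\cG^n$ with inverse $\log_n|_{\cG^n}$ (this also shows, as a by-product, that $\cG^n$ is a submanifold diffeomorphic to the vector space $\cP^n$).

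For the two inclusions I would use that $\delta$ is an algebra morphism for $\star$: this is the Compatibility axiom applied to the graded dual Hopf algebra $\cH^g$, pushed to the truncated tensor product $\cA:=(\cH^g\otimes\cH^g)/(\bigoplus_{i+j\ge n+1}\cH^{(i)*}\otimes\cH^{(j)*})$, where $\star$ is nilpotent on the augmentation ideal. Hence $\delta$ intertwines the series defining $\exp_n$ and $\log_n$, i.e.\ $\delta(\exp_n h)=\exp_\cA(\delta h)$ and $\delta(\log_n(\one+h))=\log_\cA(\delta(\one+h))$, with $\exp_\cA,\log_\cA$ the analogous series in $\cA$. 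Now if $p\in\cP^n$ then $\delta p=p\otimes\one+\one\otimes p$, whose two summands commute in $\cA$ since $(p\otimes\one)\star(\one\otimes p)=p\otimes p=(\one\otimes p)\star(p\otimes\one)$; using $\exp_\cA(a+b)=\exp_\cA(a)\star\exp_\cA(b)$ for commuting nilpotents and $(p\otimes\one)^{\star k}=p^{\star k}\otimes\one$ gives $\delta(\exp_n p)=\big((\exp_n p)\otimes\one\big)\star\big(\one\otimes(\exp_n p)\big)=(\exp_n p)\otimes(\exp_n p)$, and since the grade-$0$ part of $\exp_n p$ equals $\one\ne 0$ we get $\exp_n p\in\cG^n$. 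Symmetrically, for $g\in\cG^n$ one has $\epsilon^*(g)=1$, so $g=\one+h$ with $h$ of positive grade and $\delta g=g\otimes g=(g\otimes\one)\star(\one\otimes g)$, a product of two commuting elements of the form $\one\otimes\one+(\text{positive grade})$; applying $\log_\cA$ and the identity $\log_\cA(a\star b)=\log_\cA(a)+\log_\cA(b)$ for such commuting elements yields $\delta(\log_n g)=(\log_n g)\otimes\one+\one\otimes(\log_n g)$, that is $\log_n g\in\cP^n$.

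The main difficulty I anticipate is purely organisational: one must check that the Hopf-compatibility of $\cH^g$ genuinely descends to the quotient algebra $\cA$, that $\delta$ restricted to $\cH^{n*}$ is compatible with the truncation used to define $\star$ on $\cH^{n*}$, and that nilpotency in $\cA$ legitimises the finite-sum identities $\exp(a+b)=\exp a\star\exp b$ and $\log(a\star b)=\log a+\log b$ for commuting arguments --- all of which are the very computations already used to introduce $\exp_n,\log_n$, so no genuinely new estimate is required, and smoothness is free because every map involved is polynomial on a finite-dimensional space.
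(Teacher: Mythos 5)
Your argument is correct and is essentially the standard proof. The paper does not write it out: it merely remarks that the argument is identical to the tensor-algebra case and cites Reutenauer's \emph{Free Lie Algebras}, Theorem~3.2, whose proof is exactly what you have reproduced in the general Hopf-algebra setting --- the dualised compatibility axiom $\delta(x\star y)=\delta x\star\delta y$, followed by the multiplicativity of $\exp$ and $\log$ on commuting nilpotent arguments in the truncated tensor product.
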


\begin{proof}
    The proof is the same as in the tensor algebra case, given in \cite{freeLieAlgebras}, Theorem 3.2.
\end{proof}

\subsection{Lie algebras and universal enveloping algebra}\label{subsec:LieAlgebras}

The goal of this section is to recall the correspondence between Lie algebras and Hopf alebras For more details on this topic, we refer to \cite{surveyLieAlgebras} for a short introduction as well as \cite{freeLieAlgebras} and \cite{universalAlgebras} for a deeper introduction. Let us recall the definition of a Lie algebra:

\begin{defn}
    A Lie algebra is a vector space L equipped with a bilinear \emph{Lie bracket} $[\cdot,\cdot]: L\times L\to L$, such that
    \begin{itemize}
        \item $[\cdot,\cdot]$ is anti-symmetric.
        \item The Jacobi-identity holds: For all $x,y,z\in L$, we ave that
        \[
        [x,[y,z]] + [y,[z,x]] + [z,[x,y]] = 0\,.
        \]
    \end{itemize}
    We say $(L,[\cdot,\cdot])$ is graded if one can decompose it into finite-dimensional vector spaces
    \begin{equation*}
        L = \bigoplus_{n\ge 1} L^{(n)}\,,
    \end{equation*}
    such that for all $x\in L^{(k)}, y\in L^{(l)}$, we have that $[x,y]\in L^{(l+k)}$.
\end{defn}

\noindent Note that, unlike the Hopf-algebra case, we start with $n=1$ and will not have a space $L^{(0)}$! We say that $L$ is nilpotent, if there is an $N\ge 1$ such that for all $k\ge N$, $L^{(k)} = \emptyset$. For any graded $L$, its truncation $L^n = L/\bigoplus_{i\ge n+1} L^{(i)} = \bigoplus_{i=0}^n L^{(i)}$ is automatically nilpotent.

\begin{example}
    Let $(A,\circ)$ be an associate algebra with commutator $[x,y] = x\circ y- y\circ x$. Then $(A,[\cdot,\cdot])$ is a Lie algebra.
\end{example}

\noindent Since any Hopf algebra is an associative algebra, we can always equip it with its commutator to turn it into a Lie algebra. However, we are more interested in the Lie subalgebra given by the primitive elements $\cP$: For two primitive elements $p,q\in \cP$, it holds that
\begin{equation*}
    \Delta[p,q] = \Delta p \circ \Delta q - \Delta q\circ\Delta p = [p,q]\otimes 1+ 1\otimes [p,q]\,.
\end{equation*}
Thus, $\cP$ is closed under $[\cdot,\cdot]$ and thus a Lie subalgebra. It especially follows that the truncation $\cP^n$ is a nilpotent Lie algebra.

If one starts with a Hopf algebra $\cH$, it is in general not possible to recover $\cH$ just from the Lie algebra $\cP$ of primitive elements. However, one can recover a smaller Hopf algebra $U(\cP)$, called the universal enveloping algebra of $\cP$, such that $[x,y] = x\star y- y\star x$ holds in $U(\cP)$ and $\cP\subset U(\cP)$ is the set of primitive elements in $U(\cP)$. It is constructed as follows: Let $T(\cP)$ be the tensor algebra over with grading inherited from $\cP$, i.e. $p_1\otimes p_2\otimes\dots\otimes p_n$ has grading $\sum_{k=1}^n \abs{p_k}$. When $T(\cP)$ is an associative algebra with the product $\otimes$. To make sure that $x\otimes y-y\otimes x$ is the same as the Lie bracket $[x,y]$, we set $\cI$ to be the two-sided ideal generated by $\tilde\cI := \{x\otimes y - y\otimes x-[x,y] \vert x,y \in \cP\}$. By quotioning out this ideal, we guarantee the desired identity.

\begin{defn}
    The space
    \begin{equation*}
        U(P) := T(P)/\cI
    \end{equation*}
    is called the universal enveloping algebra.
\end{defn}

\noindent $U(\cP)$ becomes a graded Hopf algebra by setting

\begin{itemize}
    \item the multiplication to be the natural product in $U(P)$.
    \item For all $x\in P$, $\Delta x = 1\otimes x + x\otimes 1$.
    \item $\one(\alpha) = \alpha \one$.
    \item $\epsilon(\one) = 1$ and $\epsilon (x) = 0$ for all $x$ of degree higher than $0$.
    \item $S(x) = -x$ for all $x\in P$.
\end{itemize}

\noindent Here, the coproduct, $\epsilon$ and $S$ will be extended to all of $U(P)$ by demanding, that $\Delta,\epsilon$ are algebra morphisms and $S$ is an anti-homomorphism. It is a classical result (e.g. \cite{surveyLieAlgebras}, \cite{universalAlgebras}), \cite{Hochschild1981} that this turns $U(\cP)$ into a graded Hopf algebra.

Furthermore, $U(\cP)$ inherits a grading from $\cP$ by setting $\abs{p}$ to be the grade of $p$ as an element of the graded Lie algebra $\cP$. Further, we require $\abs{x* y} =\abs x + \abs y$, which extends the grading to all of $U(\cP)$.

\begin{prop}\label{prop:UisShuffle}
    $U(P) = \cH_{Sh}/\cI$, where $\cH_{Sh}$ denotes the shuffle-Hopf algebra.
\end{prop}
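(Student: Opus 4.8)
The plan is to exhibit $\cI$ as a Hopf ideal of $\cH_{Sh}$ and then check that the quotient Hopf structure coincides with the one we put on $U(\cP)$ by hand. Throughout, I take $\cH_{Sh}$ to be the tensor algebra $T(\cP)=\bigoplus_{n\ge 0}\cP^{\otimes n}$ with the concatenation product and the coproduct, counit and antipode for which every $p\in\cP$ is primitive (so $\Delta p=p\otimes\one+\one\otimes p$, $\epsilon(p)=0$, $S(p)=-p$, extended multiplicatively, resp. as an anti-homomorphism). Since $U(\cP):=T(\cP)/\cI$ was \emph{defined} as an algebra, $U(\cP)$ and $\cH_{Sh}/\cI$ already share the same underlying algebra; the whole content is therefore that $\cI$ is a Hopf ideal of $\cH_{Sh}$ and that the comultiplication, counit and antipode it induces on the quotient are the declared ones.

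The key step is that the generators $r(x,y):=xy-yx-[x,y]$ (with $xy$ the concatenation product), $x,y\in\cP$, of $\cI$ are \emph{primitive} in $\cH_{Sh}$. Indeed, since $\Delta$ is multiplicative and $x,y$ are primitive, $\Delta(xy)=xy\otimes\one+x\otimes y+y\otimes x+\one\otimes xy$; subtracting the analogous expansion of $yx$ together with $\Delta[x,y]=[x,y]\otimes\one+\one\otimes[x,y]$ (which holds because $[x,y]\in\cP$) leaves exactly $\Delta r(x,y)=r(x,y)\otimes\one+\one\otimes r(x,y)$, and similarly $\epsilon(r(x,y))=0$, $S(r(x,y))=-r(x,y)$. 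I would then pass to the full two-sided ideal: a generic element of $\cI$ is a sum of terms $a\,r(x,y)\,b$ with $a,b\in\cH_{Sh}$, and expanding $\Delta(a\,r(x,y)\,b)=\Delta(a)\bigl(r(x,y)\otimes\one+\one\otimes r(x,y)\bigr)\Delta(b)$ in Sweedler notation produces only summands of the form $a_{(1)}\,r(x,y)\,b_{(1)}\otimes a_{(2)}b_{(2)}$ and $a_{(1)}b_{(1)}\otimes a_{(2)}\,r(x,y)\,b_{(2)}$, each of which lies in $\cI\otimes\cH_{Sh}$ or in $\cH_{Sh}\otimes\cI$ since $\cI$ is two-sided; multiplicativity of $\epsilon$ and anti-multiplicativity of $S$ likewise give $\epsilon(\cI)=0$ and $S(\cI)\subseteq\cI$. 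Hence $\cI$ is a Hopf ideal, and as the $r(x,y)$ are homogeneous for the grading inherited from $\cP$, the quotient $\cH_{Sh}/\cI$ is again a graded connected Hopf algebra.

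Finally, I would match this quotient Hopf structure with the one declared on $U(\cP)$: the product is concatenation modulo $\cI$ by construction; the image of $\cP$ in $\cH_{Sh}/\cI$ consists of primitive elements and generates the quotient as an algebra, so the induced coproduct is the unique algebra morphism making $\cP$ primitive --- precisely the coproduct on $U(\cP)$ --- and the same holds for $\epsilon$ and $S$. Therefore $U(\cP)=\cH_{Sh}/\cI$ as graded connected Hopf algebras. The only genuine step is the Hopf-ideal verification, and it collapses to the observation that the relations $r(x,y)$ are primitive; the rest is the standard bookkeeping behind the construction of the universal enveloping algebra (cf. \cite{surveyLieAlgebras}, \cite{universalAlgebras}, \cite{Hochschild1981}), so I expect no real obstacle beyond tracking which tensor slot each relation occupies when one moves from the generators to the whole ideal.
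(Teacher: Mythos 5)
Your proof is correct and follows the same route as the paper's: the key observation in both is that the generators $r(x,y)=xy-yx-[x,y]$ are primitive (hence $\epsilon$-killed and $S$-negated), so that $\cI$ is a Hopf ideal and the Hopf structure descends to the quotient. The paper states this only for the generating set $\tilde\cI$ and leaves the extension to the full two-sided ideal implicit, whereas you spell out that Sweedler computation and explicitly match the induced quotient structure with the declared one on $U(\cP)$ — more thorough bookkeeping, but not a different argument.
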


\begin{proof}
    It is clear that $\cH_{Sh}/\cI$ has all the above properties, as long as it is well-defined. One just needs to prove that $\epsilon, S$ and $\Delta$ are well-defined in $\cH_{Sh}/\cI$. Note that for all $z\in\tilde\cI$, it holds that $S(z) = -z\in\tilde\cI$. Thus, it is well-defined as an anti-homomorphism $\cH_{Sh}/\cI\to \cH_{Sh}/\cI$. Furthermore, $\epsilon(\tilde\cI) = 0$. $\Delta z = 1\otimes z + z\otimes 1$ implying $\Delta\tilde\cI = 1\otimes\tilde\cI + \tilde\cI\otimes 1$. Thus, all operations are well-defined showing the claim.
\end{proof}

\noindent The following statement can be found in \cite{Hochschild1981} as Theorem 2.1:

\begin{thm}
    Let $\cP$ be a graded Lie-algebra with universal enveloping algebra $U(\cP)$. Then $\cP$ is the set of primitive elements in $U(\cP)$.
\end{thm}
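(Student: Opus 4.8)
The plan is to prove that the primitive elements of $U(\cP)$ coincide with $\cP$ by using the grading together with the explicit description of the coproduct. The inclusion $\cP\subset\mathrm{Prim}(U(\cP))$ is immediate from the very definition of the Hopf structure on $U(\cP)$, since $\Delta x = 1\otimes x + x\otimes 1$ for every $x\in\cP$ by fiat. So all the content is in the reverse inclusion: if $z\in U(\cP)$ satisfies $\Delta z = 1\otimes z + z\otimes 1$, then $z\in\cP$. First I would invoke the Poincaré--Birkhoff--Witt theorem, which for a \emph{graded} Lie algebra gives a vector-space basis of $U(\cP)$ consisting of ordered monomials $p_{i_1}p_{i_2}\cdots p_{i_m}$ in a fixed homogeneous basis $(p_i)$ of $\cP$; this is the graded analogue of the classical PBW theorem and the reason Proposition \ref{prop:UisShuffle} (the identification with a quotient of the shuffle algebra) is in place. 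Because everything is graded and each graded piece $\cP^{(n)}$ is finite-dimensional, it suffices to prove the statement degree by degree: a primitive $z$ decomposes as $\sum_n z^{(n)}$ with each $z^{(n)}$ primitive (same argument as in Lemma \ref{lem:pkIsPrimitive}, using that $R^k$ lies in complementary graded components), so one reduces to $z$ homogeneous of degree $n$.

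Next I would expand a homogeneous $z$ of degree $n$ in the PBW basis, $z=\sum c_{I}\, p_{I}$ where $I=(i_1\le\cdots\le i_m)$ ranges over ordered multi-indices with $\sum_{k}|p_{i_k}| = n$, and compute $\Delta z$ using that $\Delta$ is an algebra morphism and that each $p_i$ is primitive. The coproduct of an ordered monomial $p_{i_1}\cdots p_{i_m}$ is the usual "deshuffle" sum over ways of splitting the factors into an ordered left part and an ordered right part; the only terms landing in $1\otimes U(\cP)$ or $U(\cP)\otimes 1$ are the two extreme splittings. Hence $\Delta z - 1\otimes z - z\otimes 1$ is a sum, over all monomials of length $\ge 2$ appearing in $z$, of their "interior" coproduct terms. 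The key algebraic point is that distinct PBW monomials produce \emph{linearly independent} interior terms in $U(\cP)\otimes U(\cP)$ — this is again PBW applied to the tensor square — so the vanishing of $\Delta z - 1\otimes z - z\otimes 1$ forces $c_I = 0$ for every multi-index $I$ of length $\ge 2$. What remains is that $z$ is a linear combination of the length-one monomials $p_i$, i.e. $z\in\cP$.

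The main obstacle is making the PBW-basis argument rigorous in the graded setting and extracting the linear-independence statement for interior coproduct terms cleanly; a slick way to organize this is to filter $U(\cP)$ by the length of monomials (the standard PBW filtration) and pass to the associated graded, which is the symmetric algebra $S(\cP)$, where primitivity forces degree $\le 1$. Concretely: if $z$ had a nonzero component of monomial-length $m\ge 2$, its image $\bar z$ in $S^m(\cP)$ would be a nonzero primitive element of $S(\cP)$ of symmetric degree $m\ge 2$, contradicting the well-known fact that $\mathrm{Prim}(S(V)) = V$ for the symmetric (polynomial) Hopf algebra. Since the paper already cites \cite{Hochschild1981} for exactly this theorem, I would in practice simply reproduce the short argument above and reference that source for the underlying PBW input, rather than reproving PBW from scratch. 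The degree-by-degree reduction plus the filtration argument then closes the proof with no further computation.
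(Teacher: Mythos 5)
The paper does not prove this theorem; it simply cites it as Theorem~2.1 of \cite{Hochschild1981}. Your argument is therefore a genuine addition rather than a parallel to the paper's proof, and it is correct: both variants you sketch — expanding a primitive in a PBW basis and observing that the interior coproduct terms of length-$\ge 2$ monomials are linearly independent, or passing to the associated graded of the length filtration and invoking $\mathrm{Prim}(S(\cP))=\cP$ — are standard and close the argument. Two small points of hygiene before writing it up. First, Proposition~\ref{prop:UisShuffle} is merely the presentation $U(\cP)=T(\cP)/\cI$ by generators and relations; it is not the Poincar\'e--Birkhoff--Witt theorem. PBW (which gives you the ordered-monomial basis and the isomorphism $\mathrm{gr}\,U(\cP)\cong S(\cP)$) is a separate input and should be cited on its own. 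Second, the identity $\mathrm{Prim}(S(V))=V$ fails in positive characteristic, where $v^{p}$ is primitive; your argument implicitly uses characteristic zero, which holds since the paper works over $\RR$, but this hypothesis should be stated explicitly. The grading enters only to make the degreewise reduction convenient (via the analogue of Lemma~\ref{lem:pkIsPrimitive}); the conclusion $\mathrm{Prim}(U(\cP))=\cP$ is really a characteristic-zero statement and does not require the grading per se.
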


\noindent While in general $\cH$ can be bigger than $U(\cP)$, it turns out that they are isomorphic to each other if $\cH$ is cocommutative. This is called the Theorem of Milnor-Moore, and gives a 1-1 correspondence between Lie algebras and Hopf algebras in the cocommutative setting:

\begin{thm}[Milnor-Moore, \cite{Milnor-Moore}]\label{MilnoreMoore}
    Let $\cH$ be a connected, graded, cocommutative Hopf algebra. Then $U(\cP)\cong \cH$.
\end{thm}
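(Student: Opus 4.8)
The plan is to build an explicit Hopf algebra morphism $\Phi: U(\cP) \to \cH$ and show it is an isomorphism, using the grading to reduce everything to finite-dimensional linear algebra. First I would observe that the inclusion $\cP \hookrightarrow \cH$ is a Lie algebra map into $(\cH, [\cdot,\cdot])$, so by the universal property of the universal enveloping algebra it extends uniquely to an algebra morphism $\Phi: U(\cP) \to \cH$; since on primitives it sends primitives to primitives with $\Delta\Phi(p) = \one\otimes\Phi(p) + \Phi(p)\otimes\one$, and since $\Delta$, $\epsilon$, $S$ on $U(\cP)$ are the unique algebra (anti-)morphism extensions of their values on $\cP$, the map $\Phi$ automatically intertwines coproducts, counits and antipodes, i.e.\ it is a morphism of graded Hopf algebras. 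It also respects the grading because it is generated by the grade-preserving inclusion $\cP \to \cH$ and is multiplicative. So the content is entirely in showing $\Phi$ is bijective.

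For injectivity and surjectivity I would work grade by grade on the truncations $U(\cP)^n$ and $\cH^n$, which are finite-dimensional by the standing assumption that each $\cH^{(i)}$ is finite-dimensional (hence so is each $\cP^{(i)} = \cP \cap \cH^{(i)*}$, and hence each graded piece of $U(\cP)$). Surjectivity: one shows that $\cH$, being connected, graded and \emph{cocommutative}, is generated as an algebra by its primitive elements $\cP$. This is the crux. The standard route is: let $\cH_+ = \bigoplus_{i\ge 1}\cH^{(i)}$ be the augmentation ideal and consider the coradical-type / grading filtration; by cocommutativity one can run the argument that the associated graded of the primitive filtration is primitively generated, or more concretely argue inductively on the grade that any $x \in \cH^{(n)}$ differs from a polynomial in lower-grade primitives by a primitive element — using that $\bar\Delta x := \Delta x - \one\otimes x - x\otimes\one$ lands in $\bigoplus_{i+j=n,\, i,j\ge 1}\cH^{(i)}\otimes\cH^{(j)}$, is symmetric (cocommutativity), and can be ``integrated'' back using the lower-grade surjectivity hypothesis and coassociativity. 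Once $\cH$ is generated by $\cP$, surjectivity of $\Phi$ is immediate since $\Phi(U(\cP)) \supseteq \cP$ and $\Phi$ is an algebra map.

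Injectivity then follows from the Poincaré--Birkhoff--Witt theorem together with a dimension count: PBW gives a vector-space basis of $U(\cP)$ in terms of ordered monomials in a homogeneous basis of $\cP$, so in each grade $n$ one has $\dim U(\cP)^{(n)}$ computed explicitly; on the other hand, since $\cH$ is cocommutative and connected over a field of characteristic zero, the Milnor--Moore / Cartier--Kostant structure forces $\dim \cH^{(n)} = \dim U(\cP)^{(n)}$ — alternatively, one avoids an independent dimension computation by noting that a surjective graded linear map between graded vector spaces with finite-dimensional components of equal dimension in each grade is automatically injective, so it suffices to prove $\dim\cH^{(n)} \le \dim U(\cP)^{(n)}$, which again comes from the ``primitively generated'' statement plus PBW providing the upper bound via spanning monomials. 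Assembling: $\Phi$ is a grade-preserving Hopf algebra morphism that is surjective in each (finite-dimensional) grade and hence, by the dimension equality, an isomorphism in each grade, therefore $U(\cP) \cong \cH$.

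The main obstacle is the surjectivity step — proving that a connected graded cocommutative Hopf algebra is generated by its primitives. This is exactly where cocommutativity (and characteristic zero) is essential: without it one only gets a coalgebra statement, not an algebra-generation statement, and the inductive ``integration'' of $\bar\Delta x$ back to a primitive correction term genuinely uses the symmetry of the reduced coproduct. Since the excerpt states this is a classical result (Milnor--Moore), in the actual write-up I would most likely cite \cite{Milnor-Moore} (or give the short proof via the above induction on the grading, which is cleaner than the topological-Hopf-algebra original because here everything is already graded with finite-dimensional pieces), rather than reprove PBW and the generation statement from scratch.
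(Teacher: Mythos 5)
The paper does not give a proof of Theorem~\ref{MilnoreMoore}: it is stated as a classical result and cited directly from \cite{Milnor-Moore}. You correctly anticipate this in your final paragraph, so your plan to cite rather than reprove the theorem matches what the paper actually does, and there is no in-paper proof to compare your sketch against.

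That said, since you did offer a proof outline, note that the injectivity step as written has a logical gap. Surjectivity of the graded pieces $\Phi^{(n)}:U(\cP)^{(n)}\to\cH^{(n)}$ already gives $\dim\cH^{(n)}\le\dim U(\cP)^{(n)}$; to upgrade surjectivity to bijectivity you need the reverse inequality $\dim U(\cP)^{(n)}\le\dim\cH^{(n)}$, but the spanning argument you describe produces the same inequality you already have from surjectivity, and your other alternative (that the Milnor--Moore structure ``forces'' dimension equality) is circular. The standard way to close this avoids dimension counting entirely: $\ker\Phi$ is a graded coideal of the connected graded coalgebra $U(\cP)$; a nonzero element of minimal positive grade in $\ker\Phi$ would have to be primitive, because its reduced coproduct lies in strictly lower grades where the kernel vanishes by minimality; but $\Phi\vert_{\cP}$ is the inclusion $\cP\hookrightarrow\cH$, which is injective, so $\ker\Phi=0$. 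The rest of your outline --- constructing $\Phi$ from the universal property of $U(\cP)$, verifying it intertwines the Hopf structure by uniqueness of algebra (anti)morphism extensions agreeing on $\cP$, and the inductive primitive-generation argument using coassociativity and the symmetry of $\bar\Delta x$ under cocommutativity --- is the right shape of the classical proof.
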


\noindent This is of special interest in rough path theory, since all of the classically considered Hopf algebras of rough paths are cocommutative, implying that rough paths actually live in universal enveloping algebras. In Section \ref{subsec:BiMapsFromLieMaps}, we will use this observation to construct elementary differentials from Lie algebra maps.

\subsection{Rough paths in general Hopf algebras}

Let us fix the notation $\Delta_T := \{(s,t)\in [0,T]^2\vert s\le t\}$ for the 2 dimensional simplex. A rough path over a general Hopf algebra $\cH$ is a two-parameter process $(\XX_{s,t})_{(s,t)\in\Delta_T}$ in its dual space $\cH^*$, which fulfills three properties, namely a Hölder continuity assumption, a Chen identity and that $\XX$ is a character on $\cH$. Recall that the last assumption translates to $\XX_{s,t}$ being a group-like element. We make a further assumption, namely that for all rough paths, $\cH$ is a commutative Hopf algebra. This especially implies that $\XX$ lives in the cocommutative Hopf algebra $\cH^g$, which can therefore be seen as a universal enveloping algebra by Theorem \ref{MilnoreMoore}. Note that all three Hopf algebras introduced in Section \ref{sec:HopfALgebrasInRP} fulfill this assumption.

Thus, our definition of a rough paths reads as follows:

\begin{defn}
    Let $\cH$ be a graded, connected, commutative Hopf algebra with graded dual $\cH^g$. Let $\alpha\in(0,1)$ and set $N$ such that $N\alpha \le 1$, $(N+1)\alpha > 1$. Then $\XX:\Delta_T\to G^{N}\subset\cH^{N*}$ is called an $\alpha$-rough path, if:

    \begin{itemize}
        \item Chen's identity:
        \begin{equation*}
            \XX_{s,u}\star\XX_{u,t} = \XX_{s,t}\,,
        \end{equation*}
        where $\star$ refers to the truncated product $G^N\times G^N \to G^N$.

        \item Hölder continuity: There exists a constant $C>0$, such that for each $\tau\in\cH^{(i)*}$ for some $0\le i\le N$:
        \begin{equation*}
            \abs{\scalar{\XX_{s,t},\tau}} \le C^{\abs\tau}\abs{t-s}^{\abs{\tau}\gamma}\,.
        \end{equation*}
        We call the infimum over all such constants $C$ the norm $\norm{\XX}$.
    \end{itemize}
\end{defn}

\begin{rem}
    Note that at this point, we made the choice to think of $\XX_{s,t}$ not as an infinite sum in $\cH^*$, but only as a finite sum in $\cH^{N*}$. Thus, this is the point where we choose to follow the sewing approach of \cite{bailleul15} instead of the algebraic approach of \cite{curry20}.
\end{rem}

\noindent Our choice of $\norm{X}$ allows for the following interaction with the multiplication: For any $k\ge 1$ and $\tau\in\cH^{(i)}$ for some $0\le i\le N$, one calculates that
\begin{equation}\label{ineq:normX^k}
    \abs{\scalar{\XX_{s,t}^{\star k},\tau}} = \abs{\sum_{\tau_1,\dots,\tau_k}\scalar{X_{s,t},\tau_1}\dots\scalar{X_{s,t},\tau_k}\scalar{\tau_1\star\dots\star\tau_k,\tau}} \le C\norm{X}^{\abs\tau}\abs{t-s}^{\alpha\abs\tau}\,,
\end{equation}
for some $C>0$ depending only on the $\cH$, and where we used that $\scalar{\tau_1\star\dots\star\tau_k,\tau} \neq 0$ only holds for $\abs{\tau_1}+\dots+\abs{\tau_k} = \abs{\tau}$. This immediately allows us to compare the norm of the logarithm of $\XX$ with $\norm{\XX}$:

\begin{lem}\label{lem:boundNormL}
    Let $\LL_{s,t} = \log(\XX_{s,t})\in \cP^N$ and define $\norm{\LL}$ the same way as $\norm{\XX}$. Then there are constants $c,C>0$ depending only on the space $\cH^N$, such that for all $\tau\in \cH^{(k)}$ for some $1\le k\le N$, we get that

    \begin{equation*}
        c\norm{\XX}\le\norm{\LL}\le C\norm{\XX}\,.
    \end{equation*}
\end{lem}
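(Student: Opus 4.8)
The plan is to exploit the fact that $\XX$ and $\LL$ determine each other through the finite power series $\exp_N$ and $\log_N$ in the nilpotent algebra $\cH^{N*}_0$, and to transport the bounds through these series using \eqref{ineq:normX^k}. The key point is that on the truncated space everything is a \emph{finite} sum of products of at most $N$ factors, so there is no convergence issue and all implied constants depend only on $\cH^N$ (equivalently, on the finitely many structure constants $\scalar{\tau_1\star\dots\star\tau_k,\tau}$ for $\abs\tau\le N$).

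First I would prove the upper bound $\norm{\LL}\le C\norm{\XX}$. Write $\LL_{s,t}=\log_N(\XX_{s,t})=\sum_{k=1}^N\frac{(-1)^{k+1}}{k}(\XX_{s,t}-\one)^{\star k}$, noting the series truncates at $N$ because $\XX_{s,t}-\one\in\cH^{N*}_0$ is nilpotent of order $\le N+1$. Fix $\tau\in\cH^{(j)}$ with $1\le j\le N$. Applying $\scalar{\cdot,\tau}$ and expanding $(\XX_{s,t}-\one)^{\star k}$, each resulting term is of the form $\scalar{\XX_{s,t}^{\star m},\tau'}$ with $m\le k$ and $\abs{\tau'}\le\abs\tau$; by \eqref{ineq:normX^k} every such term is bounded by $C'\norm\XX^{\abs{\tau'}}\abs{t-s}^{\alpha\abs{\tau'}}$. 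Since there are only finitely many such terms and each carries a factor $\abs{t-s}^{\alpha\abs{\tau'}}$ with $\abs{\tau'}\ge 1$ (because $\scalar{\one,\tau}=0$ for $\abs\tau\ge 1$, so the pure unit contributions vanish), and since we may assume $\norm\XX$ is bounded below or above on the relevant range — more cleanly, one argues term by term, using that for $\abs{\tau'}\le\abs\tau$ one has $\norm\XX^{\abs{\tau'}}\abs{t-s}^{\alpha\abs{\tau'}}\le \max(1,\norm\XX^{\abs\tau})\abs{t-s}^{\alpha\abs{\tau'}}$, and collecting powers. The cleanest route is to note that $\abs{\scalar{\LL_{s,t},\tau}}$ is a polynomial (with coefficients depending only on $\cH^N$) in the quantities $\abs{\scalar{\XX_{s,t},\sigma}}\le\norm\XX^{\abs\sigma}\abs{t-s}^{\alpha\abs\sigma}$, homogeneous of total grade $\abs\tau$ in the sense that every monomial has $\sigma$-grades summing to $\abs\tau$; hence $\abs{\scalar{\LL_{s,t},\tau}}\le C\norm\XX^{\abs\tau}\abs{t-s}^{\alpha\abs\tau}$, giving $\norm\LL\le C\norm\XX$.

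For the lower bound $c\norm\XX\le\norm\LL$, I would run the symmetric argument with the roles reversed: $\XX_{s,t}=\exp_N(\LL_{s,t})=\sum_{k=0}^N\frac{1}{k!}\LL_{s,t}^{\star k}$, again a finite sum since $\LL_{s,t}\in\cH^{N*}_0$ is nilpotent. An analogue of \eqref{ineq:normX^k} holds for $\LL$ in place of $\XX$ with the same proof (it only used the grading of $\star$ and the Hölder-type bound on each component, both of which $\LL$ enjoys by definition of $\norm\LL$), so $\abs{\scalar{\LL_{s,t}^{\star k},\tau}}\le C\norm\LL^{\abs\tau}\abs{t-s}^{\alpha\abs\tau}$. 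Expanding $\scalar{\XX_{s,t},\tau}$ as a graded-homogeneous polynomial in the $\abs{\scalar{\LL_{s,t},\sigma}}$ then yields $\abs{\scalar{\XX_{s,t},\tau}}\le C\norm\LL^{\abs\tau}\abs{t-s}^{\alpha\abs\tau}$, i.e. $\norm\XX\le C\norm\LL$, which is the desired $c\norm\XX\le\norm\LL$ with $c=1/C$.

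The main obstacle — really the only subtlety — is bookkeeping the homogeneity: one must check that in both $\log_N$ and $\exp_N$, when $\scalar{\cdot,\tau}$ is applied and the star-products are expanded via the structure constants, every monomial that actually contributes has its factor-grades summing exactly to $\abs\tau$, so that the estimate closes with the correct power $\abs{t-s}^{\alpha\abs\tau}$ and no stray lower powers of $\abs{t-s}$ (which would ruin uniformity as $\abs{t-s}\to\infty$) or stray constants. This follows because $\star$ on $\cH^g$ is graded, so $\scalar{\sigma_1\star\dots\star\sigma_m,\tau}=0$ unless $\abs{\sigma_1}+\dots+\abs{\sigma_m}=\abs\tau$, exactly as was used in \eqref{ineq:normX^k}; and the pure-unit terms drop out since $\scalar{\one,\tau}=0$ for $\abs\tau\ge1$. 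Everything else is a finite, $\cH^N$-dependent constant, so the constants $c,C$ depend only on $\cH^N$ as claimed.
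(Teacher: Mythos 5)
Your proof is correct and follows essentially the same route as the paper: expand $\LL$ via the finite power series $\log_N(\XX)$, apply the grading-homogeneity bound behind \eqref{ineq:normX^k} to each $\scalar{\cdot,\tau}$-pairing, and run the symmetric argument with $\exp_N(\LL)$ for the reverse inequality. One small slip in your commentary: stray lower powers of $\abs{t-s}$ would be problematic as $\abs{t-s}\to 0$, not $\to\infty$, but this is moot since, as you correctly observe, the graded structure constants force every contributing monomial to have total grade exactly $\abs{\tau}$.
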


\begin{proof}
    Let $\tilde\XX_{s,t} = \XX_{s,t} - 1$. Then the formula
    \begin{equation*}
        \LL_{s,t} = \sum_{k=1}^N (-1)^{k+1} \frac{\tilde\XX_{s,t}^k}{k}
    \end{equation*}
    together with \eqref{ineq:normX^k} immediately gives us, that there is a $C>0$ such that
    \begin{equation*}
        \abs{\scalar{\LL_{s,t},\tau}}\le C^{\abs\tau}\norm{\XX}^{\abs\tau}\abs{t-s}^{\alpha\abs\tau}\,,
    \end{equation*}
    for all $(s,t)\in\Delta_T$ and $\tau\in\cH^{(i)}, 0\le i\le N$. This immediately gives us $\norm{\LL}\le C\norm{\XX}$. $\norm{\XX}\le \frac 1c\norm{\LL}$ follows analogously from $\XX_{s,t} = \exp_N(\LL_{s,t})$.
\end{proof}

\subsection{Hopf algebras relevant to Rough path theory}\label{sec:HopfALgebrasInRP}

In this section, we want to review the classical Hopf algebras used in Rough path theory:

\subsubsection{Shuffle algebra and tensor algebra}

The classical Hopf algebra considered in rough path theory (\cite{originalArticleRP}, \cite{FV10}, \cite{FH20}) is the tensor algebra acting on the shuffle algebra. An in-depth look at the algebra itself can be found in \cite{freeLieAlgebras}.

Let $V$ be a (finite-dimensional) vector space and consider the set of tensor polynomials
\begin{equation*}
    T(V) := \bigoplus_{k\ge 0} V^{\otimes k}\,.
\end{equation*}
We will denote the product $m:T(V)\otimes T(V)\to T(V)$ with $\cdot$, so that $\otimes$ is reserved for elements $x\otimes y \in T(V)\otimes T(V)$. Thus, $T(V)$ consists of words $e_{i_1}\dots e_{i_n}$ with $i_1,\dots,i_n\in\{1,\dots,d\}$ and $\{e_1,\dots,e_d\}$ being a basis of $V$, if it is finite-dimensional. $T(V)$ is naturally equipped with the product $\cdot$, which concatenates two words $u\cdot v = uv$. 

We can also equip it with the shuffle product $\shuffle$ as follows: Given two words $w = w_1\otimes\dots\otimes w_n$, $u = u_1\otimes\dots\otimes u_k$, $w\shuffle u$ is the sum over all words with the letters $w_1,\dots, w_n,u_1,\dots,u_k$, such that the original order of letters in $w$ and $u$ gets preserved. It is formally defined recursively by
\begin{align*}
    w\shuffle \one &= w\\
    w\shuffle u &= (w\shuffle (u_1\dots u_{k-1}))\otimes u_k + ((w_1\dots w_{n-1})\shuffle u)\otimes w_n
\end{align*}
where $\one$ is the empty word and $w= (w_1\dots w_n), u= (u_1\dots u_k)$. Note that $\shuffle$ is a commutative product. We denote the dual of the multiplication $\cdot$ with $\Delta$, which is just given by
\begin{equation*}
    \Delta w = \sum_{w_1\cdot w_2 = w} w_1\otimes w_2\,.  
\end{equation*}
On the other hand, we denote the dual of the shuffle product by the coproduct $\Delta_\shuffle$, also called the \emph{deshuffle}. We then set $\cH = (T(V),\shuffle,\Delta)$ to be the shuffle-algebra (where $\one$ is the empty word and $\epsilon(\one) = 1$, $\epsilon(w) = 0$ for all non-empty $w$), turning $\cH$ into a bialgebra.

We can introduce a grading on $\cH$, whenever there is a grading on $V$: Set $\abs{(w_1\dots w_n)} = \abs{w_1}+\dots+\abs{w_n}$ for all words $w$, suc that each letter $w_i \in V^{(j)}$ for some $j\in\NN$ and all $i=1,\dots,n$. If $V = V^{(1)}$, the grade of each word is simply its length. In this case, we say that the grading of $T(V)$ is homogenous. Otherwise, we call the grading inhomogenous. $\cH$ is a connected graded bialgebra and thus a Hopf algebra by \cite{sweedler69}.

Its graded dual is given by the Tensor algebra $\cH^g = (T(V),\cdot, \Delta_\shuffle)$ with the same $\one$ and $\epsilon$. In this case, we can directly identify the primitive elements as the single letter words $\cP = V\subset \cH$.

Rough paths in $(T(V),\cdot,\Delta_\shuffle)$ are called (weakly) geometric rough paths. Since we never use non-weakly geometric rough paths in this paper, we drop the word weakly and simply refer to rough paths in $T(V)$ as geometric rough paths.

\subsubsection{Hopf algebras of rooted trees}\label{sec:HopfAlgebrasTrees}

The classical Hopf algebras for branched rough path theory are considered over rooted trees \cite{foissy13}. In this context, we need to differentiate between \emph{planarly} and \emph{non-planarly} branched rough paths, which are constructed over ordered and unordered rooted trees, respectively.

To get started, we call an acyclic connected graph with finitely many vertices a tree. If there is a preferred vertex, we call that vertex the root and the tree rooted. Then drawing a rooted tree, we will always draw the root as the bottommost vertex. Given an alphabet $A$, any tree is called an $A$-decorated tree, if each vertex is equipped with a decoration $a\in A$. For a tree with root with decoration $i$ and children $\tau_1,\dots,\tau_n$, we use the notation
\begin{equation*}
    [\tau_1,\dots,\tau_n]_i = \begin{tree}
        \draw (0,0)\mynode{i} -- (-2.7ex,2.5ex)\mynode{$\tau_1$};
        \draw (0,0) -- (-.9ex,1.5ex);
        \draw (0,0) -- (0,1.5ex);
        \draw (0,0) -- (.9ex, 1.5ex);
        \draw (0.15,2.5ex) node[draw = none, fill = none]{$\dots$};
        \draw (0,0) -- (2.7ex,2.5ex)\mynode{$\tau_n$};
\end{tree}
\end{equation*}
We also introduce the operators $B^+_a(\tau_1\dots\tau_n) = [\tau_1,\dots,\tau_n]_a$, which maps forests to trees by adding a root decorated with $a$, as well as the root-cutting operator $B^-([\tau_1,\dots,\tau_n]_i) = \tau_1\dots\tau_n$, which maps trees to forests. 

We say a tree is \emph{ordered} (or \emph{planar}) if the children of each vertex are equipped with an ordering. If none of them have an ordering, we call the tree \emph{unordered} (or \emph{non-planar}). We denote the set of unordered trees with $\cUT$ and the set of ordered trees with $\cOT$. For example, in $\cUT$ it holds that
\begin{equation*}
    \cherry 123 = \cherry 132, \qquad \righttree 1234 = \lefttree 1324
\end{equation*}
as these trees only differ in the order of children of the root. On the flip side, in $\cOT$, we have
\begin{equation*}
    \cherry 123 \neq \cherry 132, \qquad \righttree 1234 \neq \lefttree 1324\,.
\end{equation*}
We call words of decorated trees forests, and will again make a distinction between the unordered forests $\cUF$ and the ordered forests $\cOF$: $\cUF$ is given by \emph{unordered monomials} over $\cUT$, making $\myspan(\cUF)$ simply the symmetric algebra $\mbox{Sym}(\cUT)$ over unordered trees (\cite{HopfAlgebrasCombinatorics}, Example 1.1.3). That is, in $\cUF$ we have
\begin{equation*}
    \tau_1 \tau_2 = \tau_2\tau_1
\end{equation*}
for any $\tau_1,\tau_2\in\cUT$. $\cOF$ is given by ordered monomials or words with letters in $\cOT$. Thus, $\myspan(\cOF)$ is simply the tensor algebra $T(\cOT)$ over ordered trees.\\[.5\baselineskip]

\noindent {\bf Non-planar case: Connes-Kreimer and Grossman-Larson Hopf algebras:} Let us first consider the Hopf algebras over $\cUF$, which are necessary to construct non-planar branched rough paths \cite{ferrucci22}, \cite{HK15}: In this case, $\cH$ will be given by the Connes-Kreimer Hopf algebra \cite{Connes1998} and its graded dual $\cH^g$ is given by the Grossman-Larson Hopf algebra \cite{GROSSMAN1989184}. We refer to \cite{Hoffman03} for the relation between those algebras.

As mentioned above, $\myspan(\cUF)$ forms the symmetric algebra with the (commutative) product being given by
\begin{equation*}
    (\tau_1\dots\tau_k)\cdot (\sigma_1\dots\sigma_l) = \tau_1\dots\tau_k\sigma_1\dots\sigma_l
\end{equation*}
as well as unit element $\one$ being the empty forest. The counit is given by $\epsilon(f) = 0$ for every non-empty forest $f$ and $\epsilon(\one) = 1$. The coproduct is called the Connes-Kreimer coproduct and is given by the sum over all \emph{admissible cuts} of a tree $\tau\in\cUT$. Given a tree $\tau$, an admissible cut is a choice of edges such that each path from some vertex to the root contains at most one cut. Any admissible cut of $\tau$ can be associated with an element in $\myspan(\cUF)\otimes\myspan(\cUF)$ given by $f^{(1)}\otimes f^{(2)}$, where $f^{(1)}$ is given by the ``cut off'' trees gathered as a forest, while $f^{(2)}$ is the left over tree $\tau$ after cutting of $f^{(1)}$. For example
\begin{equation*}
    \begin{tree}
        \node[label=right:{\tiny 1}]{}[grow'=up, level distance = 2ex, sibling distance = 2.5ex]
            child{node [label=right:{\tiny 2}] {}}
                child{node [label=right:{\tiny 3}] {}
                child{node [label=right:{\tiny 4}] {}}};
        \draw[red] (-1.3ex, 1ex) -- (0,1ex);
        \draw[red] (0.6ex,3ex) -- (1.9ex,3ex);
    \end{tree}\qquad\text{corresponds to}\qquad
    \tomato{2}\tomato{4}\otimes\ladderTwo{1}{3}
\end{equation*}
We can then set $\Delta \tau := \tau\otimes 1 + \sum_{\text{adm. cuts}} f^{(1)}\otimes f^{(2)}$. For a forest $f=\tau_1\dots\tau_n$, we set $\Delta f := \Delta \tau_1\dots\Delta\tau_n$. With these operations, $\cH_{CK} = (\myspan(\cUF), \cdot, \Delta,\one,\epsilon)$ forms a bialgebra. We can equip it with a scaling by setting $\abs{f}$ to be the number of vertices in the forest $f$. Thus, $\cH_{CK}$ is a graded bialgebra and can thus be equipped with an antipode to become a Hopf algebra. It is called the Connes-Kreimer Hopf algebra.

We denote its graded dual by $\cH_{GL}$ since it forms the Grossman-Larson algebra. We will not associate any basis element in $\cF$ with its dual basis but rather denote with any forest $f\in\cH_{GL}$ the element of $\cH_{CK}^*$, such that
\begin{equation*}
    \scalar{f,g} = sg(f)\delta_{f,g}
\end{equation*}
holds for all $g\in\cUF$, where $sg(f)$ is the \emph{symmetry factor} of $f$, which is recursively given by
\begin{equation*}
    sg(\one) = 1 \qquad sg(\tau_1\dots\tau_n) = n!\prod_{i=1}^n sg(\tau_i) \qquad sg([\tau_1,\dots,\tau_n]_a) = sg(\tau_1\dots,\tau_n)\,,
\end{equation*}
for all trees $\tau_1\dots\tau_n\in \cUT$. This construction causes the dual product $\star$ in $\cH_{GL}$ defined by $\scalar{f\star g, h} = \scalar{f\otimes g,\Delta h}$ to have the following form: For $f= \tau_1\dots\tau_n$, $\tau_i\in\cUT$, $i=1\dots,n$ as well as a $\sigma\in\cUT$, we define the grafting product $f\curvearrowright \sigma$ to be the sum over all possibilities to grow the trees $\tau_1,\dots,\tau_n$ out of vertices of $\sigma$. Then, $\star$ can be defined by adding a root to the forest $g$ and grafting $f$ onto the new forest before removing the root again: $f\star g := B^-(f\curvearrowright B^+_1(g))$. Note that for two trees $\tau,\sigma\in\cUT$, it holds that $\tau\star\sigma = \tau\sigma+\tau\curvearrowright\sigma$. Let us give an example of this operation:
\begin{equation*}
    (\tomato 1\tomato2)\star\ladderTwo 34 = \Ytree 3412 + \righttree 3142 + \righttree 3241 + \bushThree 3124 + \tomato 1 \cherry 324 + \tomato 1 \ladderThree342 + \tomato 2 \cherry 314 + \tomato 2 \ladderThree341 + \tomato 1\tomato 2\ladderTwo34\,.
\end{equation*}
The dual coproduct $\delta$ defined by $\scalar{\delta f, h\otimes g} = \scalar{f,hg}$ is given by the deconcatenation, corrected with the appropriate symmetry factors:
\begin{equation*}
    \delta f = \sum_{gh = f} \frac{sg(f)}{sg(g)sg(h)} g\otimes h\,.
\end{equation*}
$\one,\epsilon$ and the grading is given as in $\cH_{CK}$ and the antipode is simply the dual operator of the antipode $S$ in $\cH_{CK}$. 

In $\cH_{GL}$, the primitive elements are given as the vector space spanned by $\cUT$ of trees, as Proposition 2.11 from \cite{HK15} shows.

A rough path in $\cH_{GL}$ is called a non-planarly branched rough path.\\[.5\baselineskip]

\noindent {\bf Planar case: The Munthe-Kaas-Wright Hopf algebra:} Planar branched rough paths \cite{curry20} need to be constructed over a Hopf algebra constructed from ordered forests in $\cOF$. This Hopf algebra was constructed in \cite{MKW08} and played a vital role in the analysis of Lie-group integrators \cite{Lundervold2009HopfAO}, \cite{Lundervold2013}. We will denote it with $\cH_{MKW}$ and its graded dual with $\cH_{MKW}^g$. Note that in the 1989 paper of Grossman-Larson \cite{GROSSMAN1989184}, they discuss both unordered trees and ordered trees, which makes it sensible to also call $\cH_{MKW}^g$ a Grossman-Larson Hopf algebra. To avoid confusion, we will reserve this name for the algebra $\cH_{GL}$ over unordered trees and strictly speak of the Munthe-Kaas-Wright algebra and its graded dual for ordered trees.

As mentioned before, $\cH_{MKW}$ is given by the tensor algebra (or more precisely, the Shuffle algebra) $T(\cOF)$ over the ordered forests $\cOF$. We equip it with the empty forest as unit element $\one = \emptyset$ and the shuffle product $\shuffle$ to form an associative, commutative algebra. The counit is given by $\epsilon(\one) = 1$ and $\epsilon(f) = 0$ for all non-empty forests $f$, as before. The coproduct, denoted by $\Delta$ can be constructed as the sum over all \emph{full left admissible cuts} (\cite{MKW08}, Def. 6), but it is easier to construct a product $\star$ on $\cH_{MKW}^g$ and define $\Delta$ to be its dual. The grading on $\cH_{MKW}$ is given by the number of vertices, as before.

The graded dual $\cH_{MKW}^g$ can be equipped with the dual operation $\delta$ (making it the deshuffle) and uses the same unit and counit as $\cH_{MKW}$. In the ordered case, we do not need symmetry factors and can simply identify the elements in $\cH_{MKW}^g$ with linear combinations of $\cOF$ by requiring
\begin{equation*}
    \scalar{f,g} = \delta_{f,g}    
\end{equation*}
for all ordered forests $f,g\in\cOF$.The grading on $\cOF$ is again given by the number of vertices in a forest. To make $\cH_{MKW}, \cH_{MKW}^g$ into graded bialgebras and thus Hopf algebras, it remains to construct the product $\star$ on $\cH_{MKW}^g$: For two trees $\tau,\sigma$, we set the \emph{left grafting} $\tau\curvearrowright_l \sigma$ to be the sum over all possibilities, to grow $\tau$ out of a vertex of $\sigma$ \emph{as the left-most child} of said vertex. For a forest $f = \tau_1\dots\tau_n$, we set $f\curvearrowright_l \sigma$ to be the sum over all possibilities to grow the trees $\tau_1,\dots,\tau_n$ out of vertices of $\sigma$ as the left-most child, with the extra condition that if two or more trees $\tau_1,\dots,\tau_k$, $k\ge 2$ grow out of the same vertex $p$ in $\sigma$, then they need to have the same order as children of $\sigma$ as they had as trees in $f$. For example,
\begin{equation*}
    (\tomato 1\tomato 2) \curvearrowright_l \ladderTwo{2}{3} = \Ytree 3412 + \righttree 3142 + \righttree 3241 + \bushThree 3124\,.
\end{equation*}
As before, we can construct $\star$ by adding and removing a root in a smart way: $f\star g := B^-(f\curvearrowright_l B^+_1(g))$. Since $\cH_{MKW}^g$ as a coalgebra is the shuffle coalgebra over $T(\cOT)$, we immediately get from \cite{freeLieAlgebras} that the primitive elements are given by the free Lie algebra over $\cOT$ with respect to the commutator $[f,g]_\otimes = fg-gf$. It should be noted that this is not the commutator of $\cH^g_{MKW}$ as a Hopf algebra, but since $\star$ is associative and $\delta$ is a $\star$-homomorphism, $\cP$ is also a Lie algebra with respect to the $\star$-commutator $[f,g]_\star = f\star g-g\star f$.

The rough paths in $\cH_{MKW}^g$ are called planarly branched rough paths.

\begin{rem}
    With the use of so-called aborification maps, one can show that any geometric rough path gives rise to a canonical non-planarly branched rough path as well as a planarly branched rough path. However, it is in general not true that a non-planarly branched rough path gives rise to a planarly branched rough path or vice versa. For more information, we refer to \cite{curry20}.
\end{rem}

\subsection{Vector fields, differential operators and linear connections}

Let us use this section to recall some notions from differential geometry: Let $M$ be a smooth manifold. A smooth vector field on $M$ is map $V: M\to TM$, such that $m\mapsto V(M)\in T_mM$. In coordinates, every vector field can be expressed in the form
\begin{equation*}
    V^\phi(m) = \sum_{n=1}^d c_n(m) \partial_n\,,
\end{equation*}
where $\partial_n$ is the derivative of $\phi$ evaluated in the $n-th$ unit direction. $V$ acts on $C^\infty(M)$ by
\begin{equation*}
    V\psi(m) = \sum_{n=1}^d c_w(m)\partial_n\psi(m)\,,
\end{equation*}
which gives us a map mapping smooth vector fields into smooth differential operators on $M$. Let us recall the definition of a smooth differential operator:

\begin{defn}
    A smooth map $F:C^\infty(M)\to C^\infty(M)$ is called a smooth differential operator if there is an $N>0$ such that $F$ has the following expression in each coordinate chart:
    \begin{equation*}
        F\psi(x) = \sum_{\abs w \le N}c_w(x)\partial_w\psi(x)\,,
    \end{equation*}
    where $\partial_w \psi = \partial_{w_1}\dots\partial_{w_{\abs w}} \psi$ for each word $w= w_1\dots w_n$ over the alphabet $A = \{1,\dots, d\}$, and $c_w\in C^\infty(U)$ for the open set $U\subset M$ on which the coordinate function lives. If at least one $c_w\neq 0$ for a $\abs w= N$, we call $N$ the order of $F$. If $F$ is of order $1$, we call it a vector field. We denote by $\cD$ the space of differential operators and by $\cV$ the space of vector fields.
\end{defn}

\noindent It should be noted that $c_w$ are coordinate dependent, but the order of $F$ is not. If we have a vector field $F$, one can regain the map $V:M\to TM$ by applying $F$ to the coordinate functions to get $F\phi(m)\in \RR^d \cong T_mM$. It is further well known that the vector fields are uniquely characterized as the differential operators $F\in\cD$ such that the Leipniz rule holds: For all $\phi,\psi\in C^\infty(M)$,
\begin{equation*}
    F(\phi\cdot\psi) = F(\phi)\cdot\psi +\phi\cdot F(\psi)\,.
\end{equation*}
$(\cD,\circ,\Id)$ forms an associative algebra, where $\Id$ is the identity map and $\circ$ is composition. The set of vector fields $\cV$ equipped with the commutator $[V,U] = V\circ U- U\circ V$ form a Lie algebra.

Given a smooth vector field $V$ on $M$, the initial value problem
\begin{equation*}
    \begin{cases}
        dZ_t = V(Z_t) \\
        Z_0 = z \in M
    \end{cases}
\end{equation*}
has a unique solution which we denote by $\mu_t z = Z_t$ for all $t\le T$ up to some explosion time $T(z)$. Since everything is smooth, we can choose $T:M\to \RR_+$ to be a smooth map on $M$. We assume that $M$ does not have a border so that $T(z) > 0$ for all $z\in M$. It follows that we can assign each compact set $K\subset M$ an explosion time $T(K) = \inf_{z\in K}T(z) >0$. Furthermore, it is well known that $\mu_{t}z$ is Lipschitz-continuous in the starting value $z$: If we have $x,y\in K\subset O$ for some compact set $K$ and open neighbourhood $O\subset M$, such that $\mu_{t}x,\mu_{t}y\in O$, and we have a coordinate chart $\phi:O\to U\subset\RR^d$, it holds that
\begin{equation*}
    \abs{\phi(\mu_{t}(x))-\phi(\mu_{t}(y))}\le L(\phi, K, t)\abs{\phi(x)-\phi(y)}\,,
\end{equation*}
for $x,y\in K$, $t\le T(K)$, with $L(\phi, K, t)\to 1$ as $t\to 0$. If $M = \RR^d$ is just the flat space, we can find $T$ explicitly: For any $K\subset \RR^d$ compact, we can equip $V$ with the norm
\[
\norm{V}_K = \norm{V}_{\infty,K}+ \norm{V}_{\Lip,K}\,,
\]
where $\norm{V}_{\infty,K} = \sup_{x\in K}\abs{V(x)}$ and $\norm{V}_{\Lip,K} = \sup_{x\neq y\in K}\frac{\abs{V(x)-V(y)}}{\abs{x-y}}$ are just the usual supremum norm and Lipschitz norm. Let $\mathring K$ be the interior of $K$. Using $Z_t = z+\int_0^t V(Z_s) ds$, we get that $Z_t$ can not leave $K$, as long as $T\norm{V}_K\le \dist(z,\mathring K^C)$. By a standard fixed-point argument, it follows that we get a solution up to time $T(z)$ fulfilling
\begin{equation}\label{eq:T}
    T(z)\norm{V}_K\le \min(1,\dist(z, \mathring K^c))\,.
\end{equation}

\subsection{Linear connections}

For non-geometric rough paths, one requires more structure on a manifold to solve an RDE, as demonstrated by \cite{emilio22}. This extra structure is given by a linear connection (\cite{riemannian_manifolds}, \cite{hsu}, \cite{StochCalcEmery}). Note that from this chapter onwards, we will use Einstein summation notation. 

Let $\cV(M)$ denote the set of smooth vector fields on $M$ (a smooth $d$-dimensional manifold). A linear connection is a form to differentiate a vector field in the direction of another vector field:

\begin{defn}
    A smooth, linear connection (or covariant derivative) is a smooth map $\nabla: \cV(M)\times\cV(M)\to\cV(M)$, which is linear over $C^\infty(M)$ in the first component and fulfills the Leipniz rule in the second component:
    \begin{align*}
        \nabla_{fU} V &= f\nabla_U V \\
        \nabla_U fV &= U(f) V + f\nabla_U V\,.
    \end{align*}
    For all $U,V\in\cV(M)$, $f\in C^\infty(M)$. We call $\nabla_U V$ the  covariant derivative of $V$ in direction $U$. For all fixed $V$, the map $\nabla V:\cV(M)\to\cV(M)$, $U\mapsto \nabla_U V$ is called the total covariant derivative of $V$.
\end{defn}

\noindent One can think of $\nabla_U V$ as a directional derivative of $V$ in direction $U$. In coordinates, the easiest way to deal with connections is to denote by $\partial_i$ the (local) vector field generated by the coordinates and denote the Christoffel symbols as the smooth functions $\Gamma_{i,j}^k$ in $C^\infty(O)$ for some open set $O\subset M$ given by
\begin{equation*}
    \nabla_{\partial_i}\partial_j = \Gamma_{i,j}^k\partial_k\,.
\end{equation*}
Since $\nabla$ is smooth, the Christoffel symbols will be smooth functions. We introduce higher order covariant derivatives as in \cite{riemannian_manifolds}, pages 53-54: We denote the tangent space of $M$ at a point $x\in M$ by $T_xM$ and its dual space by $T_xM^*$. A smooth map from $M$ into the tangent bundle is called a vector field and a map into the cotangent bundle $TM^*$ is called a covector field. We denote $T_nM := TM^{\otimes n}$ and $T^kM := (TM^*)^{\otimes k}$ last but not least, $T_n^k M := T_n M \otimes T^k M$. We can identify $T_n^k M$ with smooth sections of linear maps
\[ F(x): (T_x M^*)^{\otimes n} \otimes (T_x M)^{\otimes k} \to \RR\]
and thus, we can always find smooth maps $F^{i_1,\dots, i_n}_{j_1,\dots, j_k}:M\to \RR$ such that
\begin{equation*}
    F(x) = F^{i_1,\dots,i_n}_{j_1,\dots,j_k}(x) \partial_{i_1}\otimes \dots\otimes\partial_{i_n}\otimes d^{j_1}\otimes\dots\otimes d^{j_k}.
\end{equation*}
With these spaces, we can define the n-th covariant derivative quite easily. Let $V$ be a vector field. Then the covariant derivative in the direction of $V$ is defined by:

\begin{itemize}
    \item For a function $\phi\in T^0_0 M = C^\infty(M)$, we have $\nabla_V \phi = V\phi$. If $U$ is another vector field, we $\nabla_V U$ is given by the covariant derivative.

    \item For an $F\in T^k_n M$, we define

    \begin{align}
    \begin{split}\label{covariant_der_general}
        \nabla_V F(\omega_1,\dots,\omega_n, U_1,\dots, U_k) &= V (F(\omega_1,\dots,\omega_n, U_1,\dots, U_k)) \\
        &\qquad - \sum_{j=1}^n F(\omega_1,\dots, \nabla_V \omega_j,\dots,\omega_k,U_1,\dots, U_k)\\
        &\qquad - \sum_{j=1}^k F(\omega_1,\dots, \omega_k,U_1,\dots, \nabla_V U_j,\dots, U_n)
    \end{split}
    \end{align}
\end{itemize}

\noindent Note that this definition seems to require us to first define $\nabla_V \omega$ for a vector field $V$ and a one-form $\omega$. Since a one form $\omega(U)$ only has a vector field as argument, \eqref{covariant_der_general} gives us a directly $\nabla _V\omega(U) = V(\omega(U))-\omega(\nabla_V U)$, which is well-defined. In coordinates, it reads
\begin{equation}\label{eq:covOneForm}
\nabla_V\omega = (V^\alpha\partial_\alpha\omega_i-V^\alpha\omega_k\Gamma^k_{\alpha,i})d^i\,.
\end{equation}
Thus, \eqref{covariant_der_general} is well-defined for all $F\in T^k_n M$.

We define the total covariant derivative of $F$ by $\nabla F\in T^{k+1}$
\begin{equation*}
    \nabla F(\omega_1\dots,\omega_n, V,U_1,\dots,U_k) := \nabla_V F(\omega_1,\dots,\omega_n, U_1,\dots, U_k).
\end{equation*}
The $m$-th covariant derivative of $F$ is then simply given inductively as $\nabla^m F\in T^{k+m}_n M$, $\nabla^m F := \nabla(\nabla^{m-1} F)$.

In practice, we are only interested in the $m$-th covariant derivative of functions and vector fields, for which the above calculations can be somewhat simplified. Let us pick a coordinate chart, such that we can express all vector fields via $V = V^i\partial_i$ and the one-forms as $\omega = \omega_j d^j$. As discussed above, the covariant derivative of a vector field is then given via
\begin{equation*}
    \nabla_V U = (V^i\partial_i U^k + V^i U^j\Gamma_{i,j}^k )\partial_k,
\end{equation*}
and the covariant derivative of a function is simply $\nabla_V \phi = V^i\partial_i \phi$. Using \eqref{eq:covOneForm} on the derivative of a function $\phi\in C^\infty (M)$ given by $\omega = d\phi = \partial_i\phi d^i$ we get $\nabla d\phi = \nabla^2\phi = (\partial_i\partial_j\phi -\Gamma_{i,j}\partial_k \phi)d^{i}\otimes d^j$. More generally, we get the $n-th$ covariant derivative of $\phi$ inductively via 
\begin{equation*}
    \nabla^n\phi(U_1,\dots, U_n) = U_1(\nabla^{n-1} \phi(U_2,\dots, U_n))- \sum_{k=2}^n\nabla^{n-1}\phi(U_2,\dots,\nabla_{U_1} U_k,\dots, U_n)\,,
\end{equation*}
which is again a function in $C^\infty(M)$. For vector fields, we can use the relation $V\phi = V(d\phi)$ to get a inductive formula with one more term:
\begin{align*}
    \nabla^n V(U_1,\dots,U_n)\phi &= U_1(\nabla^{n-1} V(U_2,\dots, U_n)\phi)\\
    &\qquad - \sum_{k=2}^n\nabla^{n-1} V(U_2,\dots, \nabla_{U_1} U_k,\dots, U_n)(\phi)\\
    &\qquad - \nabla^{n-1} V(\nabla_{U_1} d\phi, U_2,\dots, U_n)\,.
\end{align*}
From an algebraic point of view, a connection gives us a non-commutative, non-associative multiplication $\cV\otimes\cV\to\cV$, $U\otimes V \mapsto U\triangleright V := \nabla_U V$ on the space of vector fields. In this case, it is common to control this multiplication with the use of the \emph{torsion} and \emph{curvature} of the connection, which are given by
\begin{align*}
    T(X,Y) &= \nabla_X Y - \nabla_Y X - [X,Y]\\
    R(X,Y)Z &= \nabla_X\nabla_Y Z -\nabla_Y\nabla_X Z -\nabla_{[X,Y]}Z
\end{align*}
for vector fields $X,Y,Z$, where $[X,Y] = X\circ Y- Y\circ X$ is the commutator of the two vector fields. While $(V,\triangleright)$ is not an associative algebra, one can put extra conditions on $T$ and $R$ to turn it into a pre-Lie or post-Lie algebra, see Section \ref{sec:elementaryDiff} for details.

\section{Pseudo bialgebra maps and constructing local flows from rough paths} \label{sec:FlowsGenByRP}

\subsection{Hopf algebraic structures on $\cD$ and pseudo bialgebra maps}

As mentioned above, $\cD$ forms an associative algebra with composition as a product. If one chooses a reference point $m\in M$, if further acts on the function space $C^\infty M$ via the pairing
\begin{equation*}
    \scalar{F,\phi} = F\phi(m)\,.
\end{equation*}
Since $C^\infty(M)$ is itself an associative algebra with pointwise multiplication as a product, this raises the question if we can turn $\cD$ into a bialgebra or even Hopf algebra by finding a coproduct such that $\scalar{\Delta F,\phi\otimes\psi} = \scalar{F,\phi\cdot\psi}$. The short answer is no: Since $C^\infty(M)$ is infinite-dimensional, the dual operation of $\cdot$ is not a coproduct, and $\cD$ with the pairing described above is clearly not the whole dual space of $C^\infty(M)$.

It is however possible to construct Hopf algebras on subsets of $\cD$: The simplest example is given by Example 2.2 of \cite{HK15}, but more advanced structures have been found for example in \cite{Lundervold2009HopfAO} or \cite{varilly}.

However, in general, it is not necessary to precisely describe the Hopf algebraic structures of subsets of $\cD$: The approach generally taken in Butcher series theory, as well as rough path theory, is to construct abstract Hopf algebras $\cH, \cH^g$ beforehand and find suitable mappings, also called \emph{elementary differentials} \cite{Lundervold2009HopfAO} to map $\cH^g$ into $\cD$. 

In our paper, this class of elementary differentials will be given by what we call \emph{pseudo bialgebra maps}, which are supposed to connect $\cH$ and $\cD$ in such a way, that
\begin{itemize}
    \item[a)] $\star$ in $\cH$ corresponds to the composition $\circ$ in $\cD$ and
    \item[b)] $\Delta$ is ``dual'' to the pointwise product between functions.
\end{itemize}
This leads to the following definition:

\begin{defn}
    Let $\cH$ be a bialgebra. We say that a linear map $\cF:\cH\to\cD$ is a pseudo bialgebra map, if 

    \begin{itemize}
        \item[a)] $\cF:(\cH,\star,\one)\to (\cD,\circ,\Id)$ is an associative algebra homomorphism and
        \item[b)] For all $x\in\cH$ and $\phi,\psi\in C^\infty(M)$, we have
            \begin{equation*}
                \cF(\Delta x)(\psi\otimes\phi) = \cF(x)(\phi\cdot\psi)
            \end{equation*}
    \end{itemize}
\end{defn}

\begin{rem}
    Since $\cdot$ is a commutative product in $C^\infty(M)$, this gives an intuitive meaning to the requirement, that our rough paths need to live in an cocommutative Hopf algebra.
\end{rem}

\subsection{Pseudo bialgebra maps and Lie algebra maps}\label{subsec:BiMapsFromLieMaps}

From the definition of a pseudo bialgebra map, one can easily show the following:

\begin{prop}
    Let $p\in \cP\subset\cH$ be a primitive object. If $\cF$ is a pseudo bialgebra map, then $\cF(p)$ is a vector field.
\end{prop}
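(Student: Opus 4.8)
The plan is to recall the characterization of vector fields inside $\cD$ quoted earlier in the excerpt: an operator $F\in\cD$ is a vector field precisely when it satisfies the Leibniz rule $F(\phi\cdot\psi)=F(\phi)\cdot\psi+\phi\cdot F(\psi)$ for all $\phi,\psi\in C^\infty(M)$. So the whole proof reduces to checking that $\cF(p)$ obeys this rule, and the only input we have is the definition of a pseudo bialgebra map together with the fact that $p$ is primitive, i.e. $\Delta p = p\otimes\one+\one\otimes p$.

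First I would compute $\cF(p)(\phi\cdot\psi)$ using property b) of a pseudo bialgebra map: $\cF(p)(\phi\cdot\psi)=\cF(\Delta p)(\psi\otimes\phi)$. Then I substitute $\Delta p=p\otimes\one+\one\otimes p$ and use linearity of $\cF$ (extended to $\cH\otimes\cH$ acting on $C^\infty(M)\otimes C^\infty(M)$ in the obvious way, i.e. $\cF(x\otimes y)(\psi\otimes\phi)=\cF(x)(\psi)\cdot\cF(y)(\phi)$, which is how property b) is meant to be read) to get
\begin{equation*}
\cF(\Delta p)(\psi\otimes\phi)=\cF(p)(\psi)\cdot\cF(\one)(\phi)+\cF(\one)(\psi)\cdot\cF(p)(\phi).
\end{equation*}
Now property a) gives $\cF(\one)=\Id$, so $\cF(\one)(\phi)=\phi$ and $\cF(\one)(\psi)=\psi$, whence
\begin{equation*}
\cF(p)(\phi\cdot\psi)=\cF(p)(\psi)\cdot\phi+\psi\cdot\cF(p)(\phi)=\cF(p)(\phi)\cdot\psi+\phi\cdot\cF(p)(\psi),
\end{equation*}
using commutativity of pointwise multiplication in $C^\infty(M)$. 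This is exactly the Leibniz rule, so $\cF(p)$ is a vector field.

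The one genuine point requiring care — and the thing I would call the main obstacle, though it is minor — is that being a vector field requires $F$ to be an \emph{order one} differential operator, not merely order $\le 1$; in other words we must rule out that $\cF(p)$ is order $0$, i.e. multiplication by a function. But an order-zero operator $F\phi=c\cdot\phi$ satisfies $F(\phi\cdot\psi)=c\,\phi\psi$ while $F(\phi)\psi+\phi F(\psi)=2c\,\phi\psi$, so the Leibniz rule forces $c=0$; thus the only order-zero operator satisfying Leibniz is $0$, and $0$ is (degenerately) a vector field. Hence the Leibniz identity alone is the correct and complete characterization, and the computation above suffices. I would also remark that one should interpret the notation $\cF(\Delta x)(\psi\otimes\phi)$ via the canonical extension of $\cF$ to $\cH^{\otimes 2}$ and of the $C^\infty(M)$-action to $C^\infty(M)^{\otimes 2}$ by $C^\infty(M)^{\otimes 2}\to C^\infty(M)$, $\psi\otimes\phi\mapsto\psi\cdot\phi$ composed with $\cF\otimes\cF$; with this reading the argument is a one-line unwinding of the definitions and no further estimates or structure theory (e.g. Milnor–Moore) is needed for this direction.
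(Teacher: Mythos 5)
Your proof is correct and follows essentially the same route as the paper: apply property b) of a pseudo bialgebra map to $\cF(p)(\phi\cdot\psi)$, expand $\Delta p = p\otimes\one+\one\otimes p$, use $\cF(\one)=\Id$, and conclude by the Leibniz characterization of vector fields. The extra remark about order-zero operators is a sensible but unnecessary elaboration, since the paper (and the standard convention) already treats ``fulfills the Leibniz rule'' as the defining property of a vector field within $\cD$.
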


\begin{proof}
    Let $\cF$ be a pseudo bialgebra map and $p\in\cP$. Then:
    \begin{equation*}
        \cF(p)(\phi\cdot\psi) = \cF(p\otimes\one+\one\otimes p)(\phi\otimes\psi) = \cF(p)\phi\cdot\psi + \phi\cdot\cF(p)\psi
    \end{equation*}
    holds for all $\phi,\psi\in C^\infty(M)$. Thus, $\cF(p)$ fulfills the Leipniz rule and is a vector field.
\end{proof}

\noindent Recall that $\cP$ equipped with the commutator $[x,y] = x\star y-y\star x$ forms a Lie algebra, and since $\cF$ is an algebra-homomorphism, we immediately get that $\cF\vert_\cP:\cP\to\cV$ is a Lie algebra-morphism. 

An interesting observation from Section \ref{subsec:flowsGeneratedFromRP} is that we only use $\cF\vert_\cP$ to construct the solution flow to a rough differential equation. This raises the question, of whether a Lie algebra map mapping $\cP\to\cV$ already generates a pseudo bialgebra map on all of $\cH$.

The short answer is no, it generates a pseudo bialgebra map on the universal enveloping algebra $U(\cP)$. However, all relevant Hopf algebras for rough paths are cocommutative, so the Theorem of Milnor-Moore gives us, that $\cH\cong U(\cP)$. That implies that any Lie algebra map can be extended to a Hopf algebra map: Let $\tilde \cH$ be another Hopf algebra and let $\cF:\cP(\cH)\to\cP(\tilde\cH)$ be a Lie map. $\cF$ extends to an algebra map $\cF:\cH\to \tilde\cH$ by the universal property of $\cU(\cP(\cH))$. With a bit of work, one can see that it suffices that $\cF$ maps primitive elements into primitive elements to show that it is a Hopf-algebra map (here we just use that $\Delta,\epsilon$ are homomorphism and $S$ is an anti-homomorphism). As it turns out, this result extends to pseudo bialgebra maps:

\begin{thm}
    Let $\cF:\cP\to \cV$ be a Lie map and let $\tilde\cF:\cU(\cP)\to \cD$ be the algebra map generated by $\cF$. Then $\tilde\cF$ is a pseudo bialgebra map.
\end{thm}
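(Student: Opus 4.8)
The statement packages the two axioms of a pseudo bialgebra map, and the first of them is essentially free. By construction $\tilde\cF$ is the unique associative algebra morphism $U(\cP)\to(\cD,\circ,\Id)$ extending the Lie morphism $\cF\colon\cP\to\cV$, where we use that the inclusion $\cV\hookrightarrow\cD$ is a morphism of Lie algebras (the bracket on $\cV$ is the restriction of the commutator on $\cD$). Hence $\tilde\cF(\one)=\Id$ and $\tilde\cF(x\star y)=\tilde\cF(x)\circ\tilde\cF(y)$, which is exactly axiom (a). So the whole content of the theorem is axiom (b): $\tilde\cF(\Delta x)(\psi\otimes\phi)=\tilde\cF(x)(\phi\cdot\psi)$ for all $x\in U(\cP)$ and $\phi,\psi\in C^\infty(M)$.

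To prove (b) the plan is to introduce the set $\cA$ of those $x\in U(\cP)$ for which the identity $\tilde\cF(\Delta x)(\psi\otimes\phi)=\tilde\cF(x)(\phi\cdot\psi)$ holds for \emph{every} pair $\phi,\psi\in C^\infty(M)$, and to show $\cA=U(\cP)$. Both sides are linear in $x$, so $\cA$ is a linear subspace; and since $U(\cP)=T(\cP)/\cI$ is generated as a unital algebra by $\cP$, it suffices to prove that $\one\in\cA$, that $\cP\subseteq\cA$, and that $\cA$ is closed under $\star$. The first two are the base cases: for the unit, $\Delta\one=\one\otimes\one$ gives $\tilde\cF(\Delta\one)(\psi\otimes\phi)=\psi\cdot\phi=\tilde\cF(\one)(\phi\cdot\psi)$; for $p\in\cP$, $\Delta p=p\otimes\one+\one\otimes p$ gives $\tilde\cF(\Delta p)(\psi\otimes\phi)=(\cF(p)\psi)\cdot\phi+\psi\cdot(\cF(p)\phi)$, and this equals $\cF(p)(\phi\cdot\psi)$ precisely because $\cF(p)$ is a vector field and hence obeys the Leibniz rule. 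This is the one and only place where the hypothesis $\cF(\cP)\subseteq\cV$ is used.

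The inductive step is the crux. Let $x,y\in\cA$. Using that $\Delta$ is an algebra morphism on $U(\cP)$ for the componentwise $\star$-product on $U(\cP)\otimes U(\cP)$, and that $\tilde\cF\otimes\tilde\cF$ is therefore an algebra morphism into $(\cD\otimes\cD,\circ\otimes\circ)$, we get $\tilde\cF(\Delta(x\star y))=\tilde\cF(\Delta x)\star\tilde\cF(\Delta y)$, where $\star$ on the right is componentwise composition. Writing $\tilde\cF(\Delta y)=\sum_j C_j\otimes D_j$ with $C_j,D_j\in\cD$, one then evaluates $\tilde\cF(\Delta(x\star y))(\psi\otimes\phi)$, applies the hypothesis $x\in\cA$ with the \emph{test functions} $C_j\psi$ and $D_j\phi$, pulls $\tilde\cF(x)$ outside the resulting $j$-sum by linearity, recognizes the remaining sum as $\tilde\cF(\Delta y)(\psi\otimes\phi)=\tilde\cF(y)(\phi\cdot\psi)$ via $y\in\cA$, and finally invokes axiom (a) to rewrite $\tilde\cF(x)\circ\tilde\cF(y)=\tilde\cF(x\star y)$. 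This gives $x\star y\in\cA$ and closes the induction.

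The one subtlety — the main obstacle — is exactly in that last step: it is not a mechanical substitution, since to use the hypothesis on $x$ one must feed it test functions that already carry the differential operators coming from $\Delta y$, so it is essential that membership in $\cA$ was quantified over \emph{all} smooth $\phi,\psi$ from the outset. Beyond this the argument is purely formal, an identity of smooth functions (equivalently of germs at the reference point), needing no analysis, no convergence and no truncation — only the structural facts already recorded above: that $U(\cP)$ is generated by its primitives, that $\Delta$ is compatible with $\star$, and that $\tilde\cF$ is an algebra morphism.
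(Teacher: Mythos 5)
Your proof is correct. The paper proves the same statement by taking a word $w=w_1\dots w_n$ of primitives, expanding
\[
\cF(w)(\phi\cdot\psi)=\cF(w_1)\circ\dots\circ\cF(w_n)(\phi\cdot\psi)
\]
and applying the Leibniz rule $n$ times to turn this into $(\Id\otimes\cF(w_1)+\cF(w_1)\otimes\Id)\circ\dots\circ(\Id\otimes\cF(w_n)+\cF(w_n)\otimes\Id)$ acting on $\phi\otimes\psi$, which is then recognized as $\cF(\Delta w)(\phi\otimes\psi)$ because $\Delta$ is the deshuffle on words of primitives. Your argument has the same skeleton (base case is the Leibniz rule for vector fields; the rest follows because $U(\cP)$ is generated as an algebra by $\cP$), but the inductive step is organized differently: rather than unrolling the Leibniz expansion over a fixed word, you define the set $\cA$ of elements on which the identity holds for all test functions and show it is closed under $\star$, using the Hopf compatibility $\Delta(x\star y)=\Delta x\cdot\Delta y$ and the algebra-morphism property of $\tilde\cF$ as black boxes. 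What your version buys is a slightly cleaner handling of the reduction to generators: you never need to identify the iterated Leibniz expansion with the deshuffle of a particular word, linearity in $x$ is made explicit from the outset (the paper tacitly treats a general element of $U(\cP)$ as a single word), and, as you flag, the quantification over all $\phi,\psi$ is made load-bearing and visible where the paper leaves it implicit. The paper's version buys concreteness — the deshuffle formula is written out — at the cost of being a computation over one word at a time. Both are correct; yours is the more robust formulation of the same induction.
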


\begin{proof}
    We already know that $\tilde\cF$ is an algebra map, so it suffices to show that
    \begin{equation*}
        \cF(\Delta x)(\phi\otimes\psi) = \cF(x)(\phi\cdot\psi)\,.
    \end{equation*}
    For a primitive element $x\in\cP$, this holds since $\cF(x)$ is a vector field and thus obeys the Leipniz rule. We use that every element $w\in\cU(\cP)$ can be written as a word $w= (w_1\dots w_n)$ with letters being primitive elements $w_i\in\cP$, $i=1,\dots n$. For a general word $w\in\cU(\cP)$, it holds that
    \begin{align*}
        \cF(w)(\phi\cdot\psi) &= \cF(w_1)\circ\dots\circ\cF(w_{\abs w})(\phi\cdot\psi)\\
        &= (\Id\otimes\cF(w_1)+\cF(w_1)\otimes\Id)\circ\dots\circ(\Id\otimes\cF(w_{\abs{w}})+\cF(w_{\abs w})\otimes\Id)(\phi\cdot\psi)\\
        &=\cF(\Delta x)(\phi\otimes\psi)\,,
    \end{align*}
    where we use that $\Delta x$ is just the deshuffle of $w$ in $\cU(\cP)$.
\end{proof}

\subsection{Constructing almost-flows from rough paths}\label{subsec:flowsGeneratedFromRP}


The goal of this subsection is to show that any rough path $\XX$ together with a pseudo bialgebra map generates an almost-flow on $M$, and thus a flow on $M$ by Section \ref{sec:flows}. Thus, the only missing ingredient to solve
\[
dY_t = V_i(Y_t) d\XX_t^i\,.
\]
is the construction of pseudo bialgebra maps $\cF(V_1,\dots, V_n)$ for the vector fields $V_1,\dots V_n$. This will be the topic of Section\ref{sec:elementaryDiff}.

Let us start by fixing a connected, graded, commutative Hopf algebra $\cH$ such that its graded dual $\cH^g$ is a connected, graded, cocommutative Hopf algebra, and a rough path $\XX$ in $\cH^{N*}$. Let $\alpha\in (0,1)$ be the Hölder continuity of $\XX$ and $N$ be such that $N\alpha\le 1<(N+1)\alpha$. Further, let $\cF:\cH^g\to \cD$ be a pseudo bialgebra map. Note that we consider $\cF$ and $\XX$ fixed, and will not point out if constants explicitly depend on $\cF$ or $\norm{\XX}$.

We construct the almost-flow $\mu_{s,t}$ with the \emph{log-ODE} method, also used in \cite{bailleul15}, \cite{bailleul19} to construct rough flows. For more general information about the log-ODE method, we recommend \cite{logODECastell}. Since $\LL_{s,t} := \log_N(\XX_{s,t})\in\cP^N\subset\cP$ is a primitive element of $\cH^g$ for all $(s,t)\in\Delta_T$, and thus $\cF(\LL_{s,t})\in \cV$ is a vector field. Thus, for fixed $(s,t)\in\Delta_T$, we can consider the initial value problem
\begin{equation}\label{eq:log-ODE}
    \begin{cases}
        dZ_u = \cF(\LL_{s,t})(Z_u) \\
        Z_0 = z\in M\,.
    \end{cases}
\end{equation}
Since $\cF$ maps into smooth vector fields, this has a unique solution up to some explosion time $T(z)$. We claim that if $z$ is an inner point of $M$ and for small enough $\abs{t-s}$, $T(z)\ge 1$. In this case, we set $\mu_{s,t}(z) := Z_1$. To see that the claim holds, let $z$ be an inner point of $M$ and fix some coordinate function $\phi:M\supset O \to U\subset\RR^d$ such that $z\in O$. We denote by $\cF(\LL_{s,t})^\phi$ the vector field over $U$ given by $x\mapsto \cF(\LL_{s,t})\phi(x)$. We further assume that $U\subset K \subset\RR^d$ is a subset of some compact set, otherwise, we restrict it to $U\cap \bar B_1(\phi(z))$, where $\bar B_1(\phi(z))$ is the closed ball of radius $1$ around $\phi(z)$. It follows that $\norm{\cF(\LL_{s,t})^\phi}_U$ is finite. Furthermore, we can use Lemma \ref{lem:boundNormL} together with $\cF(\LL_{s,t}) = \sum_{\abs{\tau}\le N} \scalar{\LL_{s,t},\tau}\cF(\tau)$ to bound it with
\begin{align*}
    \norm{\cF(\LL_{s,t})}_U &\le \sum_{\abs{\tau}\le N} \abs{\scalar{\LL_{s,t},\tau}}\norm{\cF(\tau)}_U\\
    &\le C\sup_{k = 1,\dots,N}\norm{X}^k \sum_{\abs{\tau}\le N}\norm{\cF(\tau)}_U \abs{t-s}^\alpha
\end{align*}
for all $\abs{t-s}\le 1$, and where the $\tau$ form a basis of $\cH^{N*} = \bigoplus_{n\le N}\cH^{(n)*}$. Thus, by choosing $T$ small enough such that $\norm{\cF(\LL_{s,t})}_U\le \min(1,\dist(z,U^c))$, \eqref{eq:T} gives us that the explosion time is larger or equal to $1$. Since the $z$-dependence of $T(z)$ only depends on $\dist(z,U^c)$, we further see that we can choose $T$ continuously over $z$. Thus, as long as $M$ does not have a boundary, $(s,t,z)\mapsto \mu_{s,t}(z)$ exists on some admissible domain $\diag_T\times M \subset O \subset \Delta_T\times M$.

We can now present the main result of this section:

\begin{thm}\label{theo:main1}
    Let $\mu: O\to M$ be the solution to \eqref{eq:log-ODE} evaluated at time $1$. Then $\mu$ is an almost-flow on $M$. 
\end{thm}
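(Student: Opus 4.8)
The plan is to verify, in an arbitrary coordinate chart $\phi\colon M\supset O\to U\subset\RR^d$, the three defining properties of a local almost-flow for $\mu^\phi_{s,t}=\phi\circ\mu_{s,t}\circ\phi^{-1}$; Proposition~\ref{prop:sewingOnM} then upgrades this to an almost-flow on $M$. Fix a compact $K\subset U$. By the discussion preceding the theorem the log-ODE \eqref{eq:log-ODE} has explosion time $\ge 1$ for $\abs{t-s}$ small, and all the flows involved stay inside a fixed compact, so every estimate below is uniform over $K$. Write $V_{s,t}$ for the smooth vector field $x\mapsto\cF(\LL_{s,t})\phi(x)$ on $U$; Lemma~\ref{lem:boundNormL} combined with $\cF(\LL_{s,t})=\sum_{\abs\tau\le N}\scalar{\LL_{s,t},\tau}\cF(\tau)$ gives a bound of order $\norm{\XX}\,\abs{t-s}^\alpha$ on the relevant $C^{N+1}$-norm of $V_{s,t}$ on $K$, and more precisely that the grade-$k$ component of $\LL_{s,t}$ contributes a factor of size $\abs{t-s}^{k\alpha}$. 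The \textbf{Hölder} bound $\abs{\mu_{s,t}x-x}\le\norm{V_{s,t}}_{\infty,K}\lesssim\abs{t-s}^\alpha$ is then immediate from $Z_1-z=\int_0^1 V_{s,t}(Z_r)\,\dd r$, and Gronwall applied to the variational equation gives \textbf{Lipschitz continuity} with $L(K,s,t)=\exp(C\abs{t-s}^\alpha)$, which has $L(K,s,s)=1$ and $L(K,s,t)\to 1$ as $\abs{t-s}\to 0$.

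The heart of the proof is a Taylor expansion of the log-ODE flow. First I would iterate $\psi(Z_1)=\psi(z)+\int_0^1(V_{s,t}\psi)(Z_r)\,\dd r$ exactly $N+1$ times to get, for any smooth $\psi$,
\[
    \psi(\mu_{s,t}x)=\sum_{k=0}^{N}\frac{1}{k!}V_{s,t}^{\,k}\psi(x)+O\big(\abs{t-s}^{(N+1)\alpha}\big),
\]
the remainder being controlled by the $(N+1)$-fold product of $\norm{V_{s,t}}_{C^{N},K}$ times $\norm{\psi}_{C^{N+1},K}$. Expanding each power via $V_{s,t}=\sum_\tau\scalar{\LL_{s,t},\tau}\cF(\tau)$, discarding the finitely many terms whose total grade exceeds $N$ (each of size $O(\abs{t-s}^{(N+1)\alpha})$), and using that $\cF\colon(\cH^g,\star,\one)\to(\cD,\circ,\Id)$ is an algebra morphism to replace $\cF(\tau_1)\circ\dots\circ\cF(\tau_k)$ by $\cF(\tau_1\star\dots\star\tau_k)$, the surviving sum is exactly $\sum_{\abs\tau\le N}\scalar{\exp_N(\LL_{s,t}),\tau}\cF(\tau)\psi(x)$, since $\scalar{\tau_1\star\dots\star\tau_k,\tau}$ is supported on $\abs{\tau_1}+\dots+\abs{\tau_k}=\abs\tau$. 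Because $\XX_{s,t}=\exp_N(\LL_{s,t})$ this establishes Davie's formula
\[
    \psi(\mu_{s,t}x)=\sum_{\abs\tau\le N}\scalar{\XX_{s,t},\tau}\,\cF(\tau)\psi(x)+O\big(\abs{t-s}^{1+\epsilon}\big),\qquad \epsilon:=(N+1)\alpha-1>0,
\]
uniformly for $x\in K$ and for $\psi$ ranging over a fixed finite family of smooth functions.

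For the \textbf{almost-flow property} I would apply this expansion twice. Writing it for $\mu_{u,t}$ with test function $\psi$, substituting the base point $\mu_{s,u}x$, and then expanding each smooth function $\cF(\sigma)\psi$ along $\mu_{s,u}$ — where, since $\abs{\scalar{\XX_{u,t},\sigma}}\lesssim\abs{t-u}^{\abs\sigma\alpha}$, it suffices to expand only up to grade $N-\abs\sigma$, the cross-terms of total grade $\ge N+1$ being $O(\abs{t-s}^{1+\epsilon})$ — one obtains
\[
    \psi(\mu_{u,t}\mu_{s,u}x)=\sum_{\abs\rho+\abs\sigma\le N}\scalar{\XX_{s,u},\rho}\scalar{\XX_{u,t},\sigma}\,\cF(\rho)\circ\cF(\sigma)\psi(x)+O\big(\abs{t-s}^{1+\epsilon}\big).
\]
Now $\cF(\rho)\circ\cF(\sigma)=\cF(\rho\star\sigma)$, and since $\rho\star\sigma$ has grade $\abs\rho+\abs\sigma\le N$ no truncation is lost; summing the coefficients and invoking Chen's identity $\XX_{s,t}=\XX_{s,u}\star\XX_{u,t}$ shows the principal part equals $\sum_{\abs\tau\le N}\scalar{\XX_{s,t},\tau}\cF(\tau)\psi(x)$, exactly the principal part of $\psi(\mu_{s,t}x)$. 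Subtracting gives $\abs{\psi(\mu_{s,t}x)-\psi(\mu_{u,t}\mu_{s,u}x)}\lesssim\abs{t-s}^{1+\epsilon}$; taking $\psi$ to run over the $d$ coordinate components yields the almost-flow inequality in the chart. The \textbf{joint Lipschitz--almost-flow property} follows by running the same two arguments in $C^1$: differentiating \eqref{eq:log-ODE} in the initial condition, the remainders and the discarded higher-grade terms have $x$-gradient also of order $\abs{t-s}^{(N+1)\alpha}$ (with constants uniform on $K$, using that the flow Jacobians are uniformly bounded), so the defect $\mu_{s,t}-\mu_{u,t}\circ\mu_{s,u}$ has $C^1$-norm $\lesssim\abs{t-s}^{1+\epsilon}$ on $K$.

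I expect the main obstacle to be the bookkeeping in the double expansion: to substitute one copy of Davie's formula into another one must track, term by term, how many further orders of expansion a coefficient of grade $\abs\sigma$ still requires, and check that every cross-term of total grade $\ge N+1$ is genuinely $O(\abs{t-s}^{1+\epsilon})$ uniformly in $u\in[s,t]$ and in $x\in K$. The algebraic cancellation of the principal parts is forced — it is precisely Chen's identity together with property~a) of a pseudo bialgebra map (property~b) having already been used, before the theorem, to guarantee that $\cF(\LL_{s,t})$ is a genuine vector field, so that \eqref{eq:log-ODE} is well posed) — but making the remainder estimate uniform, and then upgrading it to $C^1$ for the joint property, is the delicate part. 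Once the three properties hold in every chart, Proposition~\ref{prop:sewingOnM} concludes that $\mu$ is an almost-flow on $M$.
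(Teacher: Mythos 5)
Your proposal follows essentially the same approach as the paper: Gronwall and the integral identity give the Lipschitz and H\"older bounds, iterating the log-ODE produces the Davie/Taylor expansion, and Chen's identity together with the algebra-morphism property of $\cF$ yields the almost-flow coherence. The only organizational differences are that the paper factors the algebraic coherence step into a separate lemma (Lemma~\ref{lem:technical4}) and obtains the joint Lipschitz bound via the structural Lemma~\ref{lem:DiffOpLip} rather than by a variational-equation/$C^1$ argument, but these are cosmetic rather than conceptual.
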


\noindent While it is rather straightforward to show the Lipschitz and Hölder conditions of $\mu$, the almost-flow property needs a bit more work. Our main strategy to show the almost-flow property is to show a Taylor formula for $\mu_{s,t} z$, and show that the Taylor approximation fulfills the almost-flow property. To get started, let us show a general result for smooth differential operators, which we need for the joined Lipschitz-almost-flow property:

\begin{lem}\label{lem:DiffOpLip}
    Let $F\in\cD$ be a smooth differentiable operator of order $m$ and $\phi\in C^{m+1}(M,\RR^d)$. If $\phi$ is additionally a diffeomorphism, we have that for each $x,y\in K$ for some compact set $K\subset M$:
    \begin{equation*}
        \abs{F\phi(x)-F\phi(y)} \le C\abs{\phi(x)-\phi(y)}\,,
    \end{equation*}
    where $C$ only depends on $F$ and $K$ and $\phi$.
\end{lem}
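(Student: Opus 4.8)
The plan is to work entirely in a single coordinate chart containing the relevant compact set and reduce the statement to an elementary Lipschitz estimate for smooth functions on compact sets. First I would pick a coordinate chart $\psi: M\supset O\to W\subset\RR^d$ with $K\subset O$, and shrink $K$ if necessary so that $\psi(K)$ is a compact subset of $W$; since $F$ is a smooth differential operator of order $m$, in this chart it has the form $F\phi(x) = \sum_{\abs w\le m} c_w(x)\partial_w(\phi\circ\psi^{-1})(\psi(x))$ with $c_w\in C^\infty(W)$. Because $\phi\in C^{m+1}(M,\RR^d)$, the functions $\partial_w(\phi\circ\psi^{-1})$ are $C^1$ on a neighbourhood of $\psi(K)$, hence the whole map $x\mapsto F\phi(x)$ is $C^1$ on a neighbourhood of $K$ (each component is a finite sum of products of smooth/$C^1$ functions), and therefore Lipschitz on $K$ with a constant $C_1$ depending only on $F$, $\phi$ and $K$. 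The only remaining point is to replace the Euclidean distance $\abs{x-y}$ (or $\abs{\psi(x)-\psi(y)}$) on the left by $\abs{\phi(x)-\phi(y)}$ on the right.

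This is where the hypothesis that $\phi$ is a diffeomorphism onto its image is used: $\phi^{-1}$ is then $C^1$ (indeed $C^{m+1}$) on the compact set $\phi(K)$, hence Lipschitz there, so $\abs{\psi(x)-\psi(y)} \le C_2\abs{\phi(x)-\phi(y)}$ for all $x,y\in K$ with $C_2$ depending only on $\phi$ and $K$ — here one also uses that $\psi$ is Lipschitz on $K$ to pass between the chart coordinate $\psi$ and the intrinsic distance, or one may simply absorb $\psi$ into the constant throughout. Combining the two estimates gives
\[
\abs{F\phi(x)-F\phi(y)} \le C_1\abs{\psi(x)-\psi(y)} \le C_1 C_2 \abs{\phi(x)-\phi(y)},
\]
and setting $C = C_1 C_2$ finishes the proof.

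The main (and essentially only) obstacle is bookkeeping rather than mathematics: one must be careful that $F$ is a single global differential operator but its coefficient functions $c_w$ are chart-dependent, so the constant genuinely depends on the choice of chart; however, since the statement only asserts existence of \emph{some} constant $C$ depending on $F$, $K$ and $\phi$, this causes no trouble. A minor technical point is ensuring that $\phi(K)$ and $\psi(K)$ are compact subsets of the respective open images so that all the $C^1$ functions involved are genuinely Lipschitz there; this is handled by the standard device (used already after Lemma \ref{lem:flowTechnical1}) of passing to a slightly fattened compact neighbourhood of $K$ still contained in the chart domain. No use of the almost-flow structure or of rough path theory is needed; this is a purely differential-geometric lemma isolated for later use in verifying the joint Lipschitz--almost-flow property of $\mu$.
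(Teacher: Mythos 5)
Your proof is correct and takes essentially the same route as the paper's: work in a coordinate chart, use that $F\phi$ is $C^1$ (since $F$ has order $m$ and $\phi\in C^{m+1}$), and then invoke the Lipschitz continuity of $\phi^{-1}$ on the compact set to pass from the chart distance to $\abs{\phi(x)-\phi(y)}$. The paper just compresses your two-step composition into one line by writing $F\phi = \bigl((F\phi)\circ\phi^{-1}\bigr)\circ\phi$ and bounding the middle factor's $C^1$-norm, but the content is identical.
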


\begin{proof}
    Without loss of generality, assume that $x,y$ are in some coordinate chart $\psi$, with $F = \sum_{\abs w\le m} f^w\partial_w$ in the said chart. It follows that
    \[
        F\phi(x) = \sum_{\abs w\le m} (f^w\partial_w\phi)(\phi^{-1} \circ\phi(x))\,.
    \]
    Since $(f^w\partial_w\phi)\circ\phi^{-1}$ is continuously differentiable for each word $\abs w\le m$, we get
    \[
    \abs{F\phi(x)-F\phi(y)} \le C \norm{\phi}_{C^{m+1}}\norm{\phi^{-1}}_{C^1}\abs{\phi(x)-\phi(y)}\,,
    \]
    where $\norm{\phi}_{C^{m+1}},\norm{\phi}_{C^1}$ are the respective norms with respect to the coordinate chart $\psi$. 
\end{proof}

\noindent Any map $\nu:O \to M$ has an associated operator $\nu_*$ mapping any function $\phi\in C^\infty(M)$ into a function $\nu_*\phi: O \to \RR$ via $\nu_{s,t*} \phi(x) = \phi(\nu_{s,t} x)$. We call $\nu_*$ the push-forward of $\nu$. Note that one can easily recover $\nu$ from its push-forward by applying $\nu_*$ to some coordinate function $\phi:M\supset V\to U\subset\RR^d$. Furthermore, it holds that the map $\nu\mapsto \nu_*$ is anti-homomorph in the following sense: For two maps $\nu,\eta:M\to M$, we have $(\nu\circ \eta)_*\phi = (\eta_*\circ\nu_*)\phi$.

With this notation, we can show that the push-forward of the map $\mu$ from Theorem \ref{theo:main1} has the following Taylor decomposition:

\begin{lem}[Taylor decomposition]
    Let $\phi\in C^\infty(M)$. Let $K\subset M$ be a compact set and $\abs{t-s}\le T(K)$, such that $(s,t,x)\in O$ for all $x\in K$. then there is a constant $C$ only depending on $K$ and $\phi$ and $\norm{X}$, such that
    \begin{equation}\label{eq:TaylorDecomp}
        \abs{(\mu_{s,t*}-\cF(\XX_{s,t}))\phi(x)}\le C\abs{t-s}^{(N+1)\alpha}\,.
    \end{equation}
    If additionally, $\phi:\tilde U\to U$ is a diffeomorphism from some open set $\tilde U\subset M$ to $U\subset\RR^d$, we have that for all $x,y\in K\cap \tilde U$:
    \begin{equation}\label{eq:TaylorDecompJoinedProp}
        \abs{(\mu_{s,t*}-\cF(\XX_{s,t}))(\phi(x)-\phi(y))}\le \tilde C\abs{t-s}^{(N+1)\alpha}\abs{\phi(x)-\phi(y)}\,.
    \end{equation}
    for another constant $\tilde C$ only depending on $K$, $\phi$ and $\norm{\XX}$.
\end{lem}

\begin{proof}
    We start by showing \eqref{eq:TaylorDecomp}, using the same proof as in \cite{bailleul15}: We write $\LL_{s,t} = \sum_{k=1}^n\LL_{s,t}^k$, there $\LL_{s,t}^k$ is the projection of $\LL_{s,t}$ onto $\cH^{(k)}$. Note that by Lemma \ref{lem:pkIsPrimitive}, $\LL_{s,t}^k$ is a primitive element, which implies that $\cF(\LL_{s,t}^k)$ is a vector field. Since $\phi(Z_u)$ solves the initial value problem
    \begin{equation*}
        \begin{cases}
            \frac{\partial}{\partial u}\phi(Z_u) = \cF(\LL_{s,t})\phi(Z_u) \\
            \phi(Z_0) = \phi(x)\,,
        \end{cases}
    \end{equation*}
    it follows that we get the integral identity
    \begin{equation*}
        \phi(Z_u) = \phi(x) + \int_0^u \cF(\LL_{s,t})\phi(Z_r) dr\,.
    \end{equation*}
    By iterating that identity and decomposing $\LL_{s,t}$, we get
    \begin{align*}
        \phi(\mu_{s,t} x) & = \sum_{k=0}^N \frac{\cF(\LL_{s,t})^k}{k!}\phi(x) + \underbrace{\int_{0}^1\dots\int_0^{r_{N-1}}\cF(\LL_{s,t})^{N+1}\phi(Z_{r_{N}}) dr_{N}\dots dr_{1}}_{R^1}\\
        & = \cF(\exp_N(\LL_{s,t}))\phi(x) +\underbrace{\sum_{m=2}^{N} \sum_{k_1+\dots+k_m\ge N} \prod_{j=1}^m\frac{\cF(\LL_{s,t}^{k_j})}{k_j!} \phi(x)}_{R_2} +R_1\\
        &= \cF(\XX_{s,t})\phi(x) + R_1+R_2\,.
    \end{align*}
    It remains to show that $\abs{R_i}\lesssim \abs{t-s}^{(N+1)\alpha}$ holds for $i=1,2$. To do so, let $(p_1,\dots, p_{M_k})$ be a basis for the primitive elements of order $k$ for each $k=1,\dots m$. It then holds that
    \begin{align}
    \begin{split}\label{ineq:LLk}
        \abs{\cF(\LL_{s,t}^k)\phi(x)} &\le C\norm{X}^k \sup_{i=1,\dots M_k}\abs{\cF(p_i)\phi(x)} \abs{t-s}^{k\alpha}\\
        &\le C \norm{X}^k \abs{t-s}^{k\alpha}\,,
    \end{split}
    \end{align}
    where we allow the constant $C$ to change between the lines and does depend on $\cF$ and $\phi$. By iterating this, we can calculate 
    \begin{equation}\label{ineq:prodFL}
    \abs{\prod_{j=1}^m\frac{\cF(\LL_{s,t}^{k_j})}{k_j!} \phi(x)}\le C \norm{X}^{k_1+\dots+k_m}\abs{t-s}^{\alpha(k_1+\dots+k_m)}\,.
    \end{equation}
    This shows the desired inequality for $R_2$. The one from $R_1$ follows analogously by decomposing $\LL_{s,t} = \sum_{k=1}^N\LL_{s,t}^k$.\\[.5\baselineskip]

    \noindent The proof of \eqref{eq:TaylorDecompJoinedProp} is similar. Note that by Lemma \ref{lem:DiffOpLip}, \eqref{ineq:LLk} becomes
    \begin{equation*}
        \abs{\cF(\LL_{s,t}^k)(\phi(x)-\phi(y)))} \le C \norm{X}^k\abs{t-s}^{k\alpha}\abs{\phi(x)-\phi(y)}\,.
    \end{equation*}
    Plugging this into \eqref{ineq:prodFL} gives the desired result for $R_2$. $R_1$ follows as before.
\end{proof}

\noindent We now know that we can approximate $\mu_{s,t*}$ with its Taylor decomposition $\cF(\XX_{s,t})$. To show that $\mu_{s,t}$ is an almost-flow, it is easier to show that $\cF(\XX_{s,t})$ has the almost-flow property and use the above lemma to extend this result to $\mu_{s,t}$. The next lemma shows the almost-flow property of $\cF(\XX_{s,t})$:

\begin{lem}\label{lem:technical4}
    Let $\phi\in C^\infty(M)$ and $x\in M$. Let $K\subset M$ be a compact set and $\abs{t-s}\le T(K)$. Further let $s\le u\le t$ and assume that $\mu_{s,u} x\in K$. It then holds that
    \begin{equation}\label{ineq:almostFlowFX}
        \abs{(\cF(\XX_{s,u})\circ\cF(\XX_{u,t}) - \cF(\XX_{s,t}))\phi(x)} \le C(\abs{t-s}^{(N+1)\alpha}\,.
    \end{equation}
    for some constant $C$ depending on $K,\phi,\cF$ and $\norm{\XX}$. If $\phi$ is additionally an diffeomorphism mapping some open set $\tilde U\subset M$ into $U\subset \RR^d$, we have that
    \begin{equation}\label{ineq:joinedalmostFlowFX}
        \abs{(\cF(\XX_{s,u})\circ\cF(\XX_{u,t}) - \cF(\XX_{s,t})) (\phi(x) - \phi(y))} \le C(K,\phi)\abs{t-s}^{(N+1)\alpha}\abs{\phi(x)-\phi(y)}\,.
    \end{equation}
\end{lem}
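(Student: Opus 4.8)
The plan is to reduce the almost-flow property of $\cF(\XX_{s,t})$ to a purely algebraic identity in the dual Hopf algebra, using the defining properties of a pseudo bialgebra map. The key observation is that $\cF$ being an algebra morphism from $(\cH^g,\star,\one)$ to $(\cD,\circ,\Id)$ gives us $\cF(\XX_{s,u})\circ\cF(\XX_{u,t}) = \cF(\XX_{s,u}\star\XX_{u,t})$, and Chen's identity $\XX_{s,u}\star\XX_{u,t} = \XX_{s,t}$ holds \emph{in the truncation} $\cG^N$. The only gap is the truncation: $\XX_{s,u}\star\XX_{u,t}$ as a product of elements in $\cH^{N*}$ picks up error terms in degrees $> N$ compared to $\XX_{s,t}$ when we apply $\cF$ to the full (untruncated) product. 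So the first step is to write
\begin{equation*}
    \cF(\XX_{s,u})\circ\cF(\XX_{u,t}) - \cF(\XX_{s,t}) = \cF(\XX_{s,u}\star\XX_{u,t} - \XX_{s,t})\,,
\end{equation*}
and observe that by Chen's identity the right-hand side only involves terms $\scalar{\XX_{s,u},\sigma_1}\scalar{\XX_{u,t},\sigma_2}\cF(\sigma_1\star\sigma_2)$ where $\abs{\sigma_1}+\abs{\sigma_2}\ge N+1$ (the lower-degree terms cancel exactly).

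The second step is to bound these high-degree error terms. Using the Hölder estimate for $\XX$ (and $\abs{u-s},\abs{t-u}\le\abs{t-s}$), each surviving term is controlled by $\norm{\XX}^{\abs{\sigma_1}+\abs{\sigma_2}}\abs{t-s}^{\alpha(\abs{\sigma_1}+\abs{\sigma_2})}$ times $\abs{\cF(\sigma_1\star\sigma_2)\phi(x)}$, which is a finite constant depending on $K$, $\phi$, $\cF$ since $\sigma_1\star\sigma_2$ ranges over a \emph{finite} set of basis elements of bounded degree (everything lives in $\cH^{N*}\star\cH^{N*}$, a finite-dimensional space). Since $\abs{\sigma_1}+\abs{\sigma_2}\ge N+1$, each such term is $O(\abs{t-s}^{(N+1)\alpha})$, proving \eqref{ineq:almostFlowFX}. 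For the joint property \eqref{ineq:joinedalmostFlowFX}, one applies Lemma \ref{lem:DiffOpLip} to each differential operator $\cF(\sigma_1\star\sigma_2)$ (which is smooth of bounded order) to replace $\abs{\cF(\sigma_1\star\sigma_2)\phi(x)}$ by $C\abs{\phi(x)-\phi(y)}$ in the estimate, exactly as was done for $R_2$ in the Taylor decomposition lemma.

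The step I expect to be the main (though mild) obstacle is making precise the claim that ``the low-degree terms cancel,'' i.e. that $\scalar{\XX_{s,u}\star\XX_{u,t} - \XX_{s,t},\tau} = 0$ for all $\tau$ with $\abs{\tau}\le N$. This is where one must carefully unwind that $\XX$ is defined as a map into $\cG^N\subset\cH^{N*}$ and that Chen's identity is stated in the truncated product $\cG^N\times\cG^N\to\cG^N$; concretely, for $\abs{\tau}\le N$ one has $\scalar{\XX_{s,u}\star\XX_{u,t},\tau} = \scalar{\XX_{s,u}\otimes\XX_{u,t},\Delta\tau}$ and $\Delta\tau$ only involves tensor factors of degree $\le N$, so this equals the truncated product, which by Chen equals $\scalar{\XX_{s,t},\tau}$. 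Everything else is a routine repetition of the estimates \eqref{ineq:LLk}--\eqref{ineq:prodFL} from the Taylor decomposition lemma, now applied to $\cF(\sigma_1\star\sigma_2)$ instead of products of $\cF(\LL^{k_j})$.
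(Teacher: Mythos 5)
Your proposal is correct and takes essentially the same approach as the paper: both decompose $\XX_{s,u}\star\XX_{u,t} = \XX_{s,t} + R_N$ with $R_N$ supported in degrees $> N$, invoke the algebra-morphism property of $\cF$ to convert the flow defect into $\cF(R_N)$, bound $R_N$ termwise via the Hölder estimates and the finite-dimensionality of $\cH^{N*}$, and reuse Lemma~\ref{lem:DiffOpLip} for the joint Lipschitz estimate. Your remark on why the degree-$\le N$ terms cancel (grading of $\Delta$ plus the truncated Chen identity) is precisely the content of the paper's identification of $\XX_{s,u}\star_N\XX_{u,t} = \XX_{s,t}$.
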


\begin{proof}
    We again begin by proving \eqref{ineq:almostFlowFX} first. Let $\star_N$ be the truncated product in $\cH^{N*}$. It holds that
    \begin{align*}
        \XX_{s,u}\star\XX_{u,t} &= \XX_{s,u} \star_N \XX_{u,t} + \underbrace{\sum_{k+l > N} \XX_{s,u}^k\star \XX_{u,t}^l}_{R_N}\\
        &= \XX_{s,t}+R_N\,.
    \end{align*}
    By choosing a basis $\{h_{m}^k~\vert~m=1\dots, M_k\}$ for each $\cH^{(k)}$, we get that 
    \begin{equation*}
        R_N = \sum_{h_m^k, h_q^p, k+p>N} \scalar{\XX_{s,u},h_m^k}\scalar{\XX_{u,t},h_q^p} h_m^k\star h_q^p\,.
    \end{equation*}
    It follows, that
    \begin{align}
    \begin{split}\label{ineq:FRN}
        \abs{\cF(R_N)\phi(x)} &\le \sum_{h_m^k, h_q^p,k+p>N} \abs{\scalar{\XX_{s,u},h_m^k}\scalar{\XX_{u,t},h_q^p}} \abs{\cF(h_m^k\star h_q^p)\phi(x)}\\
        &\le C \sum_{N< k+p\le 2N}\abs{u-s}^{k\alpha}\abs{t-u}^{p\alpha}\\
        &\le C \abs{t-s}^{(N+1)\alpha}\,,
    \end{split}
    \end{align}
    where we again allow $C$ to change between lines. Thus, we use the fact that $\cF$ is a pseudo bialgebra map to calculate
    \begin{align*}
        \abs{\cF(\XX_{s,u})\circ\cF(\XX_{u,t})\phi(x) - \cF(\XX_{s,t}) \phi(x))} &= \abs{\cF(\XX_{s,u}\star\XX_{u,t})\phi(x)- \cF(\XX_{s,t}) \phi(x)} \\
        &= \abs{\cF(R_N)\phi(x)}\le C \abs{t-s}^{(N+1)\alpha}\,.
    \end{align*}
    \eqref{ineq:joinedalmostFlowFX} follows analogously, by using Lemma \ref{lem:DiffOpLip} in \eqref{ineq:FRN} to get $\abs{\cF(h_m^k\star h_q^p)(\phi(x)-\phi(y))}\le C\abs{\phi(x)-\phi(y)}$.
\end{proof}

\noindent With these tools at hand, we can now prove our main result:

\begin{proof}[Proof of Theorem \ref{theo:main1}]
    Fix  a smooth coordinate function $\phi:M\supset\tilde U\to U\subset \RR^d$ as well as a compact set $K\subset U$. We need to show the Lipschitz and Hölder continuity and the almost-flow property of $\mu$. Let us start with the Lipschitz property: We set $\cF(\LL_{s,t})^\phi$ to be the vector field $\cF(\LL_{s,t})$ on $U$ given by $\cF(\LL_{s,t})\phi$. We get the integral identity
    \begin{equation*}
        Z_u = Z_0 + \int_0^u \cF(\LL_{s,t})^\phi(Z_r)dr\,.
    \end{equation*}
    Thus, it follows that if $Z$ has starting value $x\in K$ and $Z'$ has starting value $y\in K$:
    \begin{equation*}
        \abs{Z_u-Z'_u} \le \abs{x-y}+\int_0^u \norm{\cF(\LL_{s,t})^\phi}_{Lip, U}\abs{Z_r-Z'_r} dr\,.
    \end{equation*}
    So by Gronwall's inequality, we conclude
    \begin{equation*}
        \abs{Z_1-Z'_1} \le e^{\norm{\cF(\LL_{s,t})^\phi}_{Lip, U}} \abs{x-y}\,.
    \end{equation*}
    It follows that $\mu$ expressed in the coordinate chart $\phi$, given by $\mu_{s,t}^\phi := \phi\circ\mu_{s,t}\circ\phi^{-1}$, is Lipschitz with Lipschitz-constant $L(s,t) = e^{\norm{\cF(\LL_{s,t})^\phi}_{Lip, U}}$ which fulfills $L(K,s,s) = 1 = \lim_{\abs{t-s}\to 0}L(K,s,t)$. For the Hölder continuity, it suffices to recall that $\norm{\cF(\LL_{s,t})^\phi}_{\infty, U} \le C\abs{t-s}^\alpha$ to calculate
    \begin{equation*}
        \abs{\mu^\phi_{s,t} x - x} = \abs{\int_0^1 \cF(\LL_{s,t})^\phi(Z_r) dr}\le C\abs{t-s}^\alpha\,.
    \end{equation*}
    For the almost-flow property, let $m = \phi^{-1}(x)\in M$. We write
    \begin{align*}
        \mu^\phi_{u,t}\circ\mu^\phi_{s,u} x &= \cF(\XX_{u,t})\phi(\mu_{s,u} m) +\epsilon_{u,t}(\mu_{s,u} m)\\
        &= \cF(\XX_{s,u})\circ\cF(\XX_{u,t})\phi(m) + \epsilon_{u,t}(\mu_{s,u} m)+\epsilon_{s,u}(m)
    \end{align*}
    by the Taylor-decomposition, where $\abs{\epsilon_{s,u}(m)}\le C\abs{s-u}^{1+\epsilon}$ and $\abs{\epsilon_{u,t}(\mu_{s,u} m)}\le C\abs{u-t}^{1+\epsilon}$ holds, for $\epsilon = (N+1)\alpha$-1. Lemma \ref{lem:technical4} together with the Taylor-decomposition of $\mu_{s,t}^\phi$ gives us
    \begin{equation*}
        \abs{\mu_{u,t}^\phi\circ\mu_{s,u}^\phi x- \mu_{s,t}^\phi x} \le C\abs{t-s}^{1+\epsilon}\,.
    \end{equation*}
    For the joined Lipschitz-almost-flow property, we use the second part of the Taylor decomposition to calculate
    \begin{align*}
        \mu^\phi_{u,t}\circ\mu^\phi_{s,u} (x - y) = \cF(\XX_{s,u})\circ\cF(\XX_{u,t})(\phi(m) - \phi(n)) + \epsilon_{u,t}(\mu_{s,u} m,\mu_{s,u} n)+\epsilon_{s,u}(m,n)\,,
    \end{align*}
    where $n = \phi^{-1}(y)$. We get that 
    \[
        \abs{\epsilon_{u,t}(\mu_{s,u} m,\mu_{s,u} n)}\le C \abs{t-u}^{1+\epsilon}\abs{\mu^\phi_{s,u} x-\mu^\phi_{s,u} y} \le C \abs{t-u}^{1+\epsilon}\abs{x-y}\,,
    \]
    and $\abs{\epsilon_{s,u}(m,n)}\le C\abs{u-s}^{1+\epsilon}\abs{x-y}$. The joined Lipschitz-almost-flow property follows again from Lemma \ref{lem:technical4}.
\end{proof}

\noindent Thanks to Proposition \ref{prop:sewingOnM}, we see that $\mu$ generates a flow on $M$, which we denote by $\eta$. Our Taylor decomposition of $\mu$ immediately gives us a Davie's formula for $\eta$:

\begin{cor}[Davie's formula]
    Let $K\subset M$ be compact and $\phi\in C^\infty(M)$. It holds that for each $m\in M$ and $\abs{t-s}\le T(K)$:
    \begin{equation}\label{ineq:Davie}
        \abs{(\eta_{s,t*}-\cF(\XX_{s,t}))\phi(m)} \le C \abs{t-s}^{(N+1)\alpha}\,,
    \end{equation}
    where $C$ depends on $K, \phi, \cF$ and $\norm{\XX}$. Furthermore, $\eta_{s,t} x$ is the unique $\alpha$-Hölder continuous flow in $M$ (up to time $\abs{s-t} \le T(K)$) with this property.
\end{cor}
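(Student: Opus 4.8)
The plan is to read off \eqref{ineq:Davie} by interpolating between the almost-flow $\mu$ and the flow $\eta$ it generates, and then to deduce uniqueness from the uniqueness statement in Lemma~\ref{lem:sewing} together with Remark~\ref{rem:UNiquenessAddition}. The one numerical fact that makes everything fit is that in the proof of Theorem~\ref{theo:main1} the almost-flow exponent was chosen as $\epsilon = (N+1)\alpha - 1$, so that $1+\epsilon = (N+1)\alpha$; this is why the sewing error and the Taylor error sit at the same order and can be added.

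For \eqref{ineq:Davie}, fix a point $m$ for which $(s,t,m)$ lies in the admissible domain and a coordinate chart $\psi\colon V\to U\subset\RR^d$ with $m\in V$. By the H\"older estimates for $\mu$ and for $\eta$ (the H\"older part of the almost-flow, respectively \eqref{ineq:uniqueness2}), for $\abs{t-s}$ small enough both $\mu_{s,t}m$ and $\eta_{s,t}m$ remain in $V$, so $\mu^\psi$ and $\eta^\psi$ are defined near $\psi(m)$ and Lemma~\ref{lem:sewing} gives $\abs{\mu^\psi_{s,t}\psi(m) - \eta^\psi_{s,t}\psi(m)}\le \hat C(K)\abs{t-s}^{1+\epsilon}$. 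Since $\phi\circ\psi^{-1}$ is smooth, hence Lipschitz on a neighbourhood of $\psi(m)$, this yields $\abs{(\eta_{s,t*}-\mu_{s,t*})\phi(m)}\le C\abs{t-s}^{1+\epsilon}$. Combining with the Taylor decomposition \eqref{eq:TaylorDecomp} for $\mu$ and using $1+\epsilon=(N+1)\alpha$ gives $\abs{(\eta_{s,t*}-\cF(\XX_{s,t}))\phi(m)}\le C\abs{t-s}^{(N+1)\alpha}$. Covering $K$ by finitely many charts and taking a common time horizon (exactly as in the $\delta$-fattening argument following Lemma~\ref{lem:flowTechnical1}) makes the constant $C$ uniform over $K$.

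For uniqueness, let $\tilde\eta$ be any $\alpha$-H\"older flow on $M$ satisfying \eqref{ineq:Davie}. Applying \eqref{ineq:Davie} to $\tilde\eta$ with $\phi$ equal to a coordinate chart $\psi$, and subtracting the Taylor decomposition \eqref{eq:TaylorDecomp} for $\mu$, we obtain $\abs{\tilde\eta^\psi_{s,t}\psi(m)-\mu^\psi_{s,t}\psi(m)}\le C\abs{t-s}^{(N+1)\alpha} = C\abs{t-s}^{1+\epsilon}$ for $\abs{t-s}\le T(K)$. Hence $\tilde\eta^\psi$, restricted to the admissible domain on which $\eta^\psi$ is constructed in Lemma~\ref{lem:sewing}, is a local flow that is $\alpha$-H\"older and that lies within distance $C\abs{t-s}^{1+\epsilon}$ of $\mu^\psi$; that is, it satisfies \eqref{ineq:uniqueness1} and \eqref{ineq:uniqueness2}. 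By Remark~\ref{rem:UNiquenessAddition}, the uniqueness half of Lemma~\ref{lem:sewing} uses only those two properties, so $\tilde\eta^\psi = \eta^\psi$. As this holds in every chart and for every compact $K$, we conclude $\tilde\eta=\eta$ up to time $T(K)$.

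The main obstacle I expect is not analytic but organizational: one must pin down, for a given compact $K\subset M$, a finite atlas and a single $T(K)>0$ such that $\mu$, $\eta$ and $\tilde\eta$ started in $K$ never leave a common coordinate chart before time $T(K)$, so that the chart-wise estimates above can be assembled into a single $K$-uniform bound. Once this localization is in place, the argument is just a triangle inequality fed by Lemma~\ref{lem:sewing}, the Taylor decomposition, and Remark~\ref{rem:UNiquenessAddition}, with the identity $(N+1)\alpha = 1+\epsilon$ doing the bookkeeping.
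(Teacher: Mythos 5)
Your proposal is correct and takes essentially the same route as the paper: the triangle inequality $\abs{(\eta_{s,t*}-\cF(\XX_{s,t}))\phi(m)}\le\abs{(\eta_{s,t*}-\mu_{s,t*})\phi(m)}+\abs{(\mu_{s,t*}-\cF(\XX_{s,t}))\phi(m)}$ combined with Lemma~\ref{lem:sewing} and the Taylor decomposition (with the crucial identification $1+\epsilon=(N+1)\alpha$), and uniqueness via Remark~\ref{rem:UNiquenessAddition}. The only difference is that you spell out the chart-wise localization and uniformity over $K$ more explicitly than the paper does.
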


\begin{proof}
    The uniqueness immediately follows from the fact that every over $\alpha$-Hölder continuous flow $\tilde\eta$ in $M$ fulfilling \eqref{ineq:Davie} would be a flow with
    \begin{equation*}
        \abs{(\mu_{s,t*}-\tilde\eta_{s,t*})\phi(m)}\le C\abs{t-s}^{(N+1)\alpha}\,,
    \end{equation*}
    which contradicts the sewing lemma by Remark \ref{rem:UNiquenessAddition}. To see that the above inequality holds, observe that
    \begin{align*}
        \abs{(\eta_{s,t*}-\cF(\XX_{s,t}))\phi(m)} &\le \abs{(\eta_{s,t*}- \mu_{s,t*})\phi(m)}+\abs{(\mu_{s,t*} - \cF(\XX_{s,t}))\phi(m)}\\ 
        &\le C \abs{t-s}^{(N+1)\alpha}\,.
    \end{align*}
\end{proof}

\section{Constructing elementary differentials}\label{sec:elementaryDiff}

\noindent The goal of this section is to provide an overview of the classical ways to construct elementary differentials and show that they indeed generate pseudo bialgebra maps. We also present a concrete form of $\cF$ on the Hopf algebras over trees $\cH_{GL}$ and $\cH^g_{MKW}$. This will especially allow us to generalize the construction of $\cF$ on $\cH_{MKW}^g$ to any manifold equipped with a connection, whereas the standard construction requires a flat connection with constant torsion.

The main idea behind the constructions on $T(\RR^n), \cH_{GL}$ and $\cH_{MKW}^*$ is that all of these algebras (or the Lie algebras of the primitive elements) are free in some sense. More precisely, 

\begin{itemize}
    \item The tensor algebra $T(V)$ over $V=\myspan\{e_1,\dots, e_n\}$ is the free associative algebra generated by $e_1,\dots, e_n$ for linear independent $e_1,\dots,e_n$.
    \item The primitive elements of the Grossman-Larson algebra form the free pre-Lie algebra.
    \item The primitive elements of $\cH_{MKW}^g$ form the free post-Lie algebra.
\end{itemize}

\noindent Thus, the general strategy to generate $\cF$ is as follows: For the generator set $\{e_1,\dots,e_n\}$ of $T(\RR^n)$ or $\{\tomato1,\dots,\tomato n\}$ in the branched case, we set $\cF(e_i) = V_i = \cF(\tomato i)$, where $V_i$ is given by \eqref{eq:RDE}. Then, the universal properties of the respective algebras will generate maps $\cF$ on $T(\RR^n)$ as well as $\cP(\cH_{GL})$ and $\cP(\cH_{MKW}^*)$.

\subsection{Tensor algebra}\label{subsec:elementaryDiffTensor}

The elementary differentials of the tensor algebra are well known, but we still want to spend a page to show that it does fit into our theory: Considering the RDE \eqref{eq:RDE} for a smooth path $X$ lifted to a rough path $\XX$, a simple Taylor expansion of $\phi(Y_t)$ for some $\phi\in C^\infty(M)$ suggest (\cite{FV10}, page 128):
\begin{equation*}
    \phi(Y_t) \approx \sum_{\abs{w}\le N} V_{w_1}\circ\dots\circ V_{w_n}\phi(Y_s)\XX^w_{s,t}\,,
\end{equation*}
implying that we should expect $\cF(w) = V_{w_1}\circ\dots\circ V_{w_n}$ for any word $w = (w_1,\dots, w_n)$. This is precisely the map one gets from the freeness property of $T(\RR^n)$: $T(\RR^n)$ is the free associative algebra (\cite{sweedler69}, \cite{freeLieAlgebras}), meaning that it has the following universal property: Every linear map $F:\RR^n\to \cA$ for some associative algebra $(\cA,\circ,\one)$ uniquely extends to an associative algebra map $\cF:T(\RR^n)\to \cA$. Note that $(\cD,\circ,\Id)$ is an associative algebra, and we assumed that $\cF(e_i) = V_i$ for $i=1,\dots,n$, which gives us a linear map $\cF:\RR^n\to\cV\subset\cD$. Thus, we can extend it to an associative algebra map $\cF:T(\RR^n)\to \cD$. Furthermore, since every word $w = (w_1\dots w_n) = w_1\cdot w_2 \cdot\ldots\cdot w_n$, it immediately follows that
\begin{equation*}
    \cF(w) = \cF(w_1)\circ\dots\circ\cF(w_n) = V_{w_1}\circ\dots\circ V_{w_n} 
\end{equation*}

\begin{thm}
    $\cF$ is a pseudo bialgebra map.
\end{thm}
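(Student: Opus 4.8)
The plan is to verify the two defining properties of a pseudo bialgebra map for the map $\cF:T(\RR^n)\to\cD$ constructed above from the universal property of the free associative algebra. Property (a), that $\cF:(T(\RR^n),\cdot,\one)\to(\cD,\circ,\Id)$ is an associative algebra homomorphism, is free: it is exactly the conclusion of the universal property, so we get $\cF(\one)=\Id$ and $\cF(w\cdot v)=\cF(w)\circ\cF(v)$ for all words $w,v$ at no cost. Hence the entire content of the theorem is property (b), namely
\[
\cF(\Delta_\shuffle x)(\psi\otimes\phi)=\cF(x)(\phi\cdot\psi)
\]
for all $x\in T(\RR^n)$ and $\phi,\psi\in C^\infty(M)$, where $\Delta_\shuffle$ is the deshuffle coproduct (the dual of the shuffle product, which is the relevant coproduct on the graded dual $\cH^g=(T(V),\cdot,\Delta_\shuffle)$).

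First I would establish the base case: for a single letter $e_i$ we have $\cF(e_i)=V_i$, which is a genuine vector field on $M$, hence satisfies the Leibniz rule $V_i(\phi\cdot\psi)=V_i(\phi)\cdot\psi+\phi\cdot V_i(\psi)$; since $\Delta_\shuffle e_i = e_i\otimes\one+\one\otimes e_i$ (letters are primitive in the shuffle coalgebra), this is precisely property (b) for $x=e_i$. Then I would argue by induction on word length, writing a word $w=(w_1\dots w_n)=e_{w_1}\cdot w'$ with $w'=(w_2\dots w_n)$, and using that $\Delta_\shuffle$ is an algebra homomorphism for the concatenation product, so $\Delta_\shuffle w=\Delta_\shuffle(e_{w_1})\cdot\Delta_\shuffle(w')$ (product taken componentwise in $T(V)\otimes T(V)$). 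Applying $\cF\otimes$-style bookkeeping: since $\cF(w)=\cF(e_{w_1})\circ\cF(w')=V_{w_1}\circ\cF(w')$, one computes
\[
\cF(w)(\phi\cdot\psi)=V_{w_1}\bigl(\cF(w')(\phi\cdot\psi)\bigr)=V_{w_1}\bigl(\cF(\Delta_\shuffle w')(\psi\otimes\phi)\bigr)
\]
by the induction hypothesis, and then one expands $V_{w_1}$ acting on a sum of products $\cF(w'_{(1)})\psi\cdot\cF(w'_{(2)})\phi$ via its Leibniz rule, which redistributes the leading letter over the two tensor legs exactly as $\Delta_\shuffle(e_{w_1})\cdot\Delta_\shuffle(w')$ prescribes. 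This reproduces $\cF(\Delta_\shuffle w)(\psi\otimes\phi)$.

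Cleaner still, and the route I would actually write up: observe that the composite map $x\mapsto\bigl[(\phi,\psi)\mapsto\cF(x)(\phi\cdot\psi)\bigr]$ and the map $x\mapsto\bigl[(\phi,\psi)\mapsto\cF(\Delta_\shuffle x)(\psi\otimes\phi)\bigr]$ are both algebra homomorphisms from $(T(\RR^n),\cdot)$ into the associative algebra of bilinear operators on $C^\infty(M)$ with an appropriate composition — the first because $\cF$ is an algebra map and the multiplication map $C^\infty(M)\otimes C^\infty(M)\to C^\infty(M)$ intertwines composition in $\cD$ with the relevant product, the second because $\Delta_\shuffle$ is an algebra map and $\cF$ is an algebra map — and they agree on the generators $e_1,\dots,e_n$ by the Leibniz rule for vector fields; by freeness of $T(\RR^n)$ they therefore agree everywhere. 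I expect the only genuine subtlety to be making the algebra-homomorphism claim for the map $x\mapsto\cF(\Delta_\shuffle x)(\psi\otimes\phi)$ fully precise — i.e.\ identifying the correct associative structure on two-variable differential operators so that $\cF(\Delta_\shuffle(x\cdot y))=\cF(\Delta_\shuffle x)\bullet\cF(\Delta_\shuffle y)$ matches composition of one-variable operators after multiplying functions — but this is a formal check, not a hard one, and is essentially the statement that $\Delta_\shuffle$ being dual to $\shuffle$ encodes the Leibniz rule iterated. Once that is set up, the theorem follows immediately from the universal property, with no analysis required.
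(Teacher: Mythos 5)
Your first route is, step for step, the paper's own proof: base case from the Leibniz rule for the vector fields $V_i$, then induction on word length by peeling off the leading letter, using that $\Delta_\shuffle$ (being dual to $\shuffle$) is a homomorphism for concatenation, and redistributing $V_{w_1}$ over the two tensor legs. The ``cleaner'' second route is also correct, but the subtlety you flag is real and worth getting exactly right: the target of the second map is not naturally an algebra of bilinear operators, so rather than comparing two homomorphisms into a common algebra, the precise statement is that $m$ intertwines the two honest algebra maps $x\mapsto\cF(x)\in\cD$ and $x\mapsto(\cF\otimes\cF)(\Delta_\shuffle x)\in\cD\otimes\cD$, i.e.\ $\cF(x)\circ m=m\circ(\cF\otimes\cF)(\Delta_\shuffle x)$, and the set of $x$ for which this holds is a subalgebra of $T(\RR^n)$ containing the generators (the subalgebra property follows from $\cF(xy)\circ m=\cF(x)\circ\cF(y)\circ m=\cF(x)\circ m\circ(\cF\otimes\cF)(\Delta_\shuffle y)=m\circ(\cF\otimes\cF)(\Delta_\shuffle x)\circ(\cF\otimes\cF)(\Delta_\shuffle y)=m\circ(\cF\otimes\cF)(\Delta_\shuffle(xy))$); once phrased this way, it is the same induction, just without Sweedler bookkeeping.
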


\begin{proof}
    By its definition, $\cF:T(\RR^n)\to\cD$ is an algebra map. Thus, we only need to check that
    \begin{equation*}
        \cF(\Delta x)(\phi\otimes\psi) = \cF(x)(\phi\cdot\psi)\,.
    \end{equation*}
    Note that for letters $i=1\dots, n$, the Leipniz rule together with the fact that $i$ is primitive gives us
    \begin{align*}
        \cF(i)(\phi\cdot\psi) &= V_i(\phi\cdot\psi) \\
        &= (V_i\otimes \Id + \Id\otimes V_i)(\phi\otimes \psi) \\
        &= \cF(\Delta i)(\phi\otimes\psi)\,.
    \end{align*}
    For general words $x= (x_1\dots x_m)$, we set $\bar x = (x_2\dots x_m)$ and inductively conclude that
    \begin{align*}
        \cF(\Delta x)(\phi\otimes\psi) &= \cF((x_1\otimes \one+ \one \otimes x_1)\Delta\bar x)(\phi\otimes\psi)\\
        &= \sum_{(\bar x)}\cF(x_1\bar x^{(1)}\otimes \bar x^{(2)} + \bar x^{(1)}\otimes x_1\bar x^{(2)})(\phi\otimes \psi) \\
        &= \sum_{(\bar x)} \left[\cF(x_1)\circ\cF(\bar x^{(1)})\phi\cdot\cF(\bar x^{(2)})\psi + \cF(\bar x^{(1)})\phi \cdot\cF(x_1)\circ\cF(\bar x^{(2)}) \psi\right]\\
        &= \cF(x_1)\left[\cF(\Delta\bar x)(\phi\otimes\psi)\right]\\
        &= \cF(x_1)\circ\cF(\bar x)(\phi\cdot\psi)\\
        &= \cF(x)(\phi\cdot\psi)\,.
    \end{align*}
\end{proof}

\begin{rem}
    The set of primitive elements of $T(V)$ is given by the free Lie algebra of $V$ (\cite{FV10},\cite{freeLieAlgebras}). Thus, one can also construct $\cF$ as the unique Lie algebra map mapping $\cP\to\cV$, extending $\cF(i) = V_i$. One can easily check that the pseudo bialgebra map generated by this Lie map is the same as we described above.
\end{rem}

\subsection{Grossmann-Larson algebra}

Before we start with the proper construction of $\cF$ over $\cH_{GL}$, let us recall the elementary differentials for the Grossman-Larson algebra: Taking again a smooth path $X$ in \eqref{eq:RDE} and Taylor expending $\phi(Y)$, we see that (\cite{HK15} calculation (1.6))
\begin{equation*}
    \phi(Y_t) \approx \phi(Y_s) + V_i \phi(Y_s) X^i_{s,t} + \frac 12V_i^\alpha V_j^\beta \partial_{\alpha,\beta} \phi(Y_s) X^i_{s,t} X^j_{s,t} + V_i^\alpha\partial_\alpha V_j^\beta \partial_\beta\phi(Y_s) \int_{s}^t X^i_{s,r} dX^j_{r}+\dots
\end{equation*}
for any smooth function $\phi$. If we compare that to our expected series $\sum_{\abs{\tau}\le N} \frac 1{sg(\tau)}\cF(\tau)\phi(Y_s) \XX^\tau_{s,t}$ (recall that $\tau$ is not the dual element of $\tau\in\cH_{CK}$, but it is connected via $\scalar{\tau,\tau} = sg(\tau)$. Thus, we need to add a factor $\frac 1{sg(\tau)}$ to the formula.), we see a pattern arise for $\cF$: For $n$ vector fields $U_i^\alpha\partial_\alpha, i=1\dots,n$, we say that we apply all $U_i$ to $\phi$ by applying the differential operator $U_1^{\alpha_1}\dots U_n^{\alpha_n}\partial_{\alpha_1,\dots\alpha_n}$ to $\phi$. For a vector field $U = U^\beta\partial_\beta$, we say that $U_1,\dots, U_n$ applied to U is the vector field $(U_1^{\alpha_1}\dots U_n^{\alpha_n})\partial_{\alpha_1,\dots\alpha_n} U^\beta\partial_\beta$. Then for any tree $\tau$, $\cF(\tau)$ is the vector field one gets by setting $\cF(\tomato i) = V_i^\alpha\partial_\alpha$ and inductively applying all children to their parent. For example, 
\begin{equation*}
    \cF(\righttree 1234) = (V_4^\alpha\partial_\alpha V_3^\beta) V_2^\gamma\partial_{\beta\gamma}V_1^\delta\partial_\delta\,.
\end{equation*}
Finally, one gets $\cF(\tau_1\dots\tau_n)\phi$ for any forest $\tau_1\dots\tau_n$ by applying all vector fields $\cF(\tau_i),i=1\dots,n$ to $\phi$.

If our vector fields do not live in flat space but on the manifold $M$, we do not have partial derivatives but only covariant derivatives. Replacing the partial derivatives with covariant derivatives gives us the following recursive formula for $\cF$: As before, we set $\cF(\tomato i) = V_i$. Inductively, we then assign to the tree $[\tau_1,\dots,\tau_n]_i$ the $n$-th covariant derivative of $V_i$ in the directions $\cF(\tau_1),\dots,\cF(\tau_n)$:
\begin{equation*}
    \cF([\tau_1,\dots,\tau_n]_i) = \nabla^n V_i(\cF(\tau_1),\dots,\cF(\tau_n))\,.
\end{equation*}
Finally, we set $\cF(\tau_1,\dots,\tau_n)\phi = \nabla^n\phi(\cF(\tau_1),\dots,\cF(\tau_n))$.

\begin{rem}
    Note that $\cF$ is in general not well-defined (see \cite{geometricViewRP}, Remark 4.35): In general, $\nabla^2\phi(V_1,V_2)\neq \nabla^2\phi(V_2,V_1)$. However, the Grossman-Larson algebra does not differentiate between the order of trees in a forest or the order of children of a vertex in a tree. Thus, we will need an additional assumption to construct $\cF$, namely that $\nabla$ is flat and torsion-free. In this case, $\nabla^n\phi(V_1,\dots, V_n)$ does not depend on the order of $V_1,\dots, V_n$. We discuss this map in more detail in Section \ref{subsec:MyMap}.
\end{rem}

\noindent In the flat, torsion-free case it actually suffices to require
\begin{itemize}
    \item $\cF(\tomato i) = V_i$  for $i=1,\dots, n$ and
    \item $\cF(\tau\curvearrowright\sigma) = \nabla_{\cF(\tau)}\cF(\sigma)$ for all trees $\tau,\sigma\in\cUF$
\end{itemize}
to construct $\cF$ on all trees (see Thm. \ref{thm:flatCase}), where $\curvearrowright$ is the crafting product defined in Section \ref{sec:HopfAlgebrasTrees}. This approach has a deep connection to the algebraic structure of $\myspan(\cUT)\subset\cH_{GL}$: As shown in for example \cite{ayadi} or \cite{chapoton}, $(\myspan(\cUT),\curvearrowright)$ is the \emph{free pre-Lie algebra} generated by $\{\tomato 1,\dots\tomato n\}$. Pre Lie algebras are an important concept for the analysis of Butcher series \cite{MKL13}, and are defined as follows:

\begin{defn}
    A pre-Lie algebra is a vector field $\cA$ equipped with a (not necessarily associative) product $\triangleright$, which fulfills the pre-Lie identity:
    \begin{equation*}
        (x\triangleright y)\triangleright z - x\triangleright(y\triangleright z) = (y\triangleright x)\triangleright z - y\triangleright(x\triangleright z)
    \end{equation*}
    for all $x,y,z\in \cA$.
\end{defn}

\noindent Pre-Lie algebras are \emph{Lie admissible}, meaning that their commutator $[x,y] = x\triangleright y- y\triangleright x$ forms a Lie bracket on $\cA$. As mentioned above, the free pre-Lie algebra generated by the set $\{\tomato1,\dots,\tomato n\}$ is well known to be the span of (unordered) trees $\myspan(\cUT)$ equipped with the grafting product 
\begin{equation*}
    \tau\triangleright\sigma = \tau \curvearrowright \sigma\,,
\end{equation*}
where $\curvearrowright$ is defined in Section \ref{sec:HopfAlgebrasTrees}. Recall that $\myspan(\cUT) = \cP(\cH_{GL})$, which immediately gives us the free structure of the primitives of $\cH_{GL}$.

We want to connect $\curvearrowright$ with the the non-associative product $V\triangleright U = \nabla_V U$ on $\cV$. Assume for the moment, that $(\cV,\triangleright)$ forms a pre-Lie algebra, such that the classical Lie bracket on $\cV$ is equal to the commutator $[U,V] = U\triangleright V-V\triangleright U$. By assumption, we have a linear map $\cF:\myspan\{\tomato 1\dots\tomato n\}\to\cV$, $\cF(\tomato i) = V_i$. The free property of $\cP(\cH_{GL})$ then automatically extends this to a unique pre-Lie map $\cF:\cP(\cH_{GL})\to\cV$. Note that for all trees $\tau,\sigma\in\cUT$, we have
\begin{equation*}
    \tau\curvearrowright\sigma = \tau\star\sigma-\tau\sigma\,.
\end{equation*}
since $\tau\sigma = \sigma\tau$, we get that $[\tau,\sigma] = \tau\star\sigma-\sigma\star\tau = \tau\curvearrowright\sigma-\sigma\curvearrowright\tau$. Thus, the pre-Lie property of $\cF$ then immediately gives us
\begin{equation*}
    \cF([\tau,\sigma]) = \cF(\tau)\triangleright\cF(\sigma)- \cF(\sigma)\triangleright\cF(\tau) = [\cF(\tau),\cF(\sigma)]\,,
\end{equation*}
turning $\cF$ into a Lie algebra map. We can summarize this result in the following proposition:

\begin{prop}\label{prop:PreLieMap}
    Let $\nabla$ be flat and torsion-free. Then the pre-Lie map $\cF:\cP(\cH_{GL})\to\cV$ generated by $\cF(\tomato i) = V_i$, $i=1\dots, n$, is a Lie-algebra map. Thus, it generates a pseudo bialgebra map on $\cH_{GL}$.
\end{prop}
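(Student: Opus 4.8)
The proof is essentially a chaining of universal properties, so the plan is: \textbf{(i)} verify that $(\cV,\triangleright)$ with $U\triangleright V:=\nabla_U V$ is genuinely a pre-Lie algebra whose commutator is the Lie bracket of vector fields; \textbf{(ii)} use the freeness of $\cP(\cH_{GL})=\myspan(\cUT)$ to produce the pre-Lie morphism $\cF$; \textbf{(iii)} read off that $\cF$ is a Lie-algebra morphism from the identity $\tau\curvearrowright\sigma=\tau\star\sigma-\tau\sigma$; and \textbf{(iv)} conclude via the extension theorem of Section~\ref{subsec:BiMapsFromLieMaps} together with Milnor--Moore.

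For (i) --- which carries the only real geometric content --- the torsion-free hypothesis gives $U\triangleright V-V\triangleright U=\nabla_U V-\nabla_V U=[U,V]$, so the $\triangleright$-commutator on $\cV$ coincides with the ordinary bracket of vector fields. Flatness reads $\nabla_U\nabla_V W-\nabla_V\nabla_U W=\nabla_{[U,V]}W$; substituting $[U,V]=U\triangleright V-V\triangleright U$ and using $C^\infty$-linearity of $\nabla_{(\cdot)}W$ in its subscript, the right-hand side equals $(U\triangleright V)\triangleright W-(V\triangleright U)\triangleright W$, and rearranging gives exactly the pre-Lie identity $(U\triangleright V)\triangleright W-U\triangleright(V\triangleright W)=(V\triangleright U)\triangleright W-V\triangleright(U\triangleright W)$. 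For (ii), $(\myspan(\cUT),\curvearrowright)$ is the free pre-Lie algebra on $\{\tomato 1,\dots,\tomato n\}$ (\cite{ayadi},\cite{chapoton}) and $\myspan(\cUT)=\cP(\cH_{GL})$, so the linear map $\tomato i\mapsto V_i$ extends uniquely to a pre-Lie morphism $\cF:\cP(\cH_{GL})\to(\cV,\triangleright)$, i.e. $\cF(\tau\curvearrowright\sigma)=\cF(\tau)\triangleright\cF(\sigma)$ for all trees $\tau,\sigma$; in particular $\cF$ takes values in $\cV$.

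For (iii), the Lie bracket on $\cP(\cH_{GL})$ is the $\star$-commutator $[x,y]_\star=x\star y-y\star x$; since it is bilinear and $\cP(\cH_{GL})$ is spanned by single trees, it suffices to evaluate it on $\tau,\sigma\in\cUT$. From $\tau\star\sigma=\tau\sigma+\tau\curvearrowright\sigma$ in $\cH_{GL}$ and $\tau\sigma=\sigma\tau$ in $\cUF$ we get $[\tau,\sigma]_\star=\tau\curvearrowright\sigma-\sigma\curvearrowright\tau$, hence by (ii) and (i)
\[
\cF([\tau,\sigma]_\star)=\cF(\tau)\triangleright\cF(\sigma)-\cF(\sigma)\triangleright\cF(\tau)=[\cF(\tau),\cF(\sigma)],
\]
so $\cF:\cP(\cH_{GL})\to\cV$ is a Lie-algebra morphism. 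Finally, for (iv), the theorem of Section~\ref{subsec:BiMapsFromLieMaps} extends $\cF$ to a pseudo bialgebra map $\tilde\cF:\cU(\cP(\cH_{GL}))\to\cD$, and since $\cH_{GL}$ is connected, graded and cocommutative, Milnor--Moore (Theorem~\ref{MilnoreMoore}) gives $\cU(\cP(\cH_{GL}))\cong\cH_{GL}$; transporting $\tilde\cF$ along this isomorphism yields the desired pseudo bialgebra map on $\cH_{GL}$, which restricts to $\cF$ on primitives. The only genuine obstacle is step (i); everything else is bookkeeping, the one subtlety being to keep track of which commutator --- the $\star$-commutator, not the forest commutator $[f,g]=fg-gf$ --- defines the Lie structure on $\cP(\cH_{GL})$.
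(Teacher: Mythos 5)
Your proof is correct and follows the same overall architecture as the paper's: establish that $(\cV,\triangleright)$ is pre-Lie with $\triangleright$-commutator equal to the vector-field bracket, invoke the freeness of $\cP(\cH_{GL})=\myspan(\cUT)$ to get the pre-Lie morphism, convert it to a Lie morphism via the identity $\tau\star\sigma=\tau\sigma+\tau\curvearrowright\sigma$, and extend to a pseudo bialgebra map by the result of Section~\ref{subsec:BiMapsFromLieMaps} and Milnor--Moore. The one place you diverge is step~(i): the paper outsources the equivalence ``$(\cV,\triangleright)$ is pre-Lie iff $\nabla$ is flat and torsion-free'' to Theorem~2.23 of \cite{MKSV20} and only verifies the commutator identity directly from the torsion tensor, whereas you derive the pre-Lie identity from scratch by feeding $[U,V]=U\triangleright V-V\triangleright U$ into the flatness relation $\nabla_U\nabla_V W-\nabla_V\nabla_U W=\nabla_{[U,V]}W$ and using linearity of $\nabla$ in its lower slot. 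Both are valid; your version is self-contained (and only uses additivity in the subscript, not full $C^\infty$-linearity), while the paper's citation also covers the converse direction that the proposition does not actually need.
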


\begin{proof}
    The only thing left to show is that $(\cV,\triangleright)$ is a pre-Lie algebra with $[V,U] = V\triangleright U - U \triangleright V$ for all $U,V\in\cV$. The fact that $(\cV,\triangleright)$ is pre-Lie if and only if $(M,\nabla)$ is flat and torsion-free has been proven in Thm 2.23 in \cite{MKSV20}. Writing down the torsion tensor gives us for all $V,U\in\cV$
    \begin{equation*}
        0 = T(V,U) = V\triangleright U - U\triangleright V - [V,U]\,,
    \end{equation*}
    which gives us the second property.
\end{proof}

\subsection{The Munthe-Kaas Wright algebra}\label{subsec:elementarryDiffMKW}

Note that the map $\cF$, given by $\cF(\tomato i) = V_i, \cF([\tau_1,\dots,\tau_n]_i) = \nabla^n V_i(\cF(\tau_1),\dots,\cF(\tau_n))$ is well-defined as a map from $\cF:\cH^g_{MKW}\to\cD$, as the forests are now ordered. This map naturally arises on $\cH_{MKW}^g$, if we loosen the assumptions on $(M,\nabla)$, which gives rise to the structure of \emph{post-Lie algebras}. Post-Lie algebras were independently introduced in \cite{MKL13} and \cite{VALLETTE2007699} (for an introduction, we recommend \cite{CKMK19}) and are based on the observation, that without the assumption of flatness and torsion-freeness, a single operation $\triangleright$ no longer suffices to generate all the relevant vector fields from $V_1,\dots, V_n$. The solution to this is to introduce a second operation, which is given by another Lie bracket $\llbracket\cdot,\cdot\rrbracket$. In this Section, we will show the construction of $\cF$ with the use of the post-Lie structure of the primitive elements $\cP(\cH^g_{MKW})$ (mainly following \cite{curry20}), which will still require rather strong assumptions on $(M,\nabla)$. In Section \ref{subsec:MyMap}, we show that this leads to the same map as directly constructing $\cF$ on each forest as above and that the direct approach allows us to loosen the assumptions on $(M,\nabla)$ even further.

Let us first recall the definition of a post-Lie algebra:

\begin{defn}
    A post Lie algebra $(\mathfrak g,\brackets{\cdot,\cdot},\triangleright)$ is a Lie algebra $(\mathfrak g,\brackets{\cdot,\cdot})$ equipped with a product $\triangleright:\mathfrak g\otimes\mathfrak g\to \mathfrak g$, such that for all $x,y,z\in\mathfrak g$
    \begin{align*}
        x\triangleright\brackets{y,z} &= \brackets{x\triangleright y, z} + \brackets{y,x\triangleright z}\,. \\
        \brackets{x,y}\triangleright z &= a_\triangleright(x,y,z)-a_\triangleright(y,x,z)
    \end{align*}
    where $a_\triangleright(x,y,z) := x\triangleright (y\triangleright z) - (x\triangleright y)\triangleright z$ is the associator.
\end{defn}

\noindent As shown in \cite{MKW08}, \cite{MKL13} the free post-Lie algebra is given as the free Lie algebra over the space of trees $\cOT$, equipped with the left crafting product $\curvearrowright_l$ and the Lie bracket $\brackets{\tau,\sigma} = \tau\sigma-\sigma\tau$ being the commutator with respect to the tensor product. A thorough discussion of the free post-Lie algebra can be found here: \cite{postLie}. Note that $\cH^g_{MKW}$ as a space is given by the tensor algebra $T(\cOT)$ with the coproduct being the coshuffle, which immediately gives us that the primitive elements of $\cH_{MKW}^g$ are given by the free Lie algebra over $\cOT$ with respect to the Lie bracket $\brackets{\cdot,\cdot}$. Thus, $(\cP(\cH_{MKW}^g),\brackets{\cdot,\cdot},\curvearrowright_l)$ is the free post-Lie algebra.

It remains to find the post-Lie structure on $\cV$: We equip $\cV$ with the product $V\triangleright U = \nabla_V U$ and the operation $\brackets{U,V} := -T(U,V)$, where $T$ is the torsion (note that this is in general not a Lie bracket). Thm 2.23 from \cite{MKSV20} then states that $(\cV,\brackets{\cdot,\cdot},\triangleright)$ is a post-Lie algebra if and only if $(M,\nabla)$ is flat and $\brackets{\cdot,\cdot}$ is a Lie bracket, which Munthe-Kaas et al call ``$(M,\nabla)$ is parallel''. \cite{MKL13} shows that a sufficient condition such that $\brackets{\cdot,\cdot}$ is a Lie bracket is, that $(M,\nabla)$ is flat and has \emph{constant torsion}, that is, its covariant derivative vanishes: $\nabla T = 0$. This is the condition we will use in the following to ensure that $(\cV,\brackets{\cdot,\cdot},\triangleright)$ forms a post-Lie algebra.

Since $\cV$ forms a post-Lie algebra, the universal property immediately gives us a unique post-Lie map $\cF:\cP(\cH_{MKW}^g)\to\cV$ with $\cF(\tomato i) = V_i$ for $i=1,\dots, n$. And while this immediately implies that $\cF$ is a Lie homomorphism with respect to the Lie brackets $\brackets{\cdot,\cdot}$ on $\cP(\cH_{MKW}^g)$ and $\cV$ respectively, we will only get a pseudo bialgebra map if it is a Lie homomorphism with respect to the Lie bracket $[\tau,\sigma] = \tau\star\sigma-\sigma\star\tau$ on $\cP(\cH_{MKW}^g)$ and $[U,V] = U\circ V-V\circ U$ on $\cV$. This follows easily, as the main result of this section shows:

\begin{prop}\label{prop:postLie}
    Let $(M,\nabla)$ be flat with constant torsion. Then $\cF:(\cP(\cH_{MKW}^g),[\cdot,\cdot])\to(\cV,[\cdot,\cdot])$ is a Lie map and thus generates a pseudo bialgebra map $\cF:\cH_{MKW}^g\to \cD$.
\end{prop}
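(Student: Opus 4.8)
The plan is to reduce the statement to a single bracket identity. Write $[\cdot,\cdot]_\star$ for the $\star$-commutator on $\cP(\cH_{MKW}^g)$ and $[\cdot,\cdot]$ for the commutator of vector fields on $\cV$. Once we have shown that the post-Lie map $\cF\colon\cP(\cH_{MKW}^g)\to\cV$ constructed above satisfies
\[
\cF([\tau,\sigma]_\star)=[\cF(\tau),\cF(\sigma)]\qquad\text{for all }\tau,\sigma\in\cP(\cH_{MKW}^g),
\]
the conclusion follows at once: $\cF$ is then a Lie map $\cP(\cH_{MKW}^g)\to\cV$, so the theorem in Section~\ref{subsec:BiMapsFromLieMaps} yields an algebra map $\cU(\cP(\cH_{MKW}^g))\to\cD$ that is a pseudo bialgebra map, and $\cU(\cP(\cH_{MKW}^g))\cong\cH_{MKW}^g$ by Milnor--Moore (Theorem~\ref{MilnoreMoore}), the identification carrying the Hopf commutator of $\cH_{MKW}^g$ (i.e.\ $[\cdot,\cdot]_\star$) to the Lie bracket of the primitives.

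First I would express $[\cdot,\cdot]_\star$ on primitives through the post-Lie data $(\brackets{\cdot,\cdot},\curvearrowright_l)$. For a single tree $\tau$ the identity $\tau\star g=B^-(\tau\curvearrowright_l B^+_1(g))$ gives $\tau\star g=\tau\cdot g+\tau\curvearrowright_l g$, and the standard description of $\cH_{MKW}^g$ as the enveloping algebra of the free post-Lie algebra $(\cP(\cH_{MKW}^g),\brackets{\cdot,\cdot},\curvearrowright_l)$ (see \cite{MKW08}, \cite{MKL13}, \cite{CKMK19}) shows that $f\star g=f\cdot g+f\curvearrowright_l g$ for every primitive $f$ and every $g$. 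Hence, for $\tau,\sigma\in\cP(\cH_{MKW}^g)$,
\[
[\tau,\sigma]_\star=(\tau\sigma-\sigma\tau)+\tau\curvearrowright_l\sigma-\sigma\curvearrowright_l\tau=\brackets{\tau,\sigma}+\tau\curvearrowright_l\sigma-\sigma\curvearrowright_l\tau,
\]
that is, $[\cdot,\cdot]_\star$ is exactly the derived Lie bracket of the post-Lie structure on $\cP(\cH_{MKW}^g)$.

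The companion identity on $\cV$ is the torsion computation: with $U\triangleright V=\nabla_U V$ and $\brackets{U,V}=-T(U,V)$, the defining relation $T(U,V)=\nabla_U V-\nabla_V U-[U,V]$ gives
\[
\brackets{U,V}+U\triangleright V-V\triangleright U=-T(U,V)+\nabla_U V-\nabla_V U=[U,V],
\]
so the derived bracket on $\cV$ is just the vector-field commutator. Since $\cF$ is a post-Lie morphism, $\cF(\brackets{\tau,\sigma})=\brackets{\cF(\tau),\cF(\sigma)}$ and $\cF(\tau\curvearrowright_l\sigma)=\cF(\tau)\triangleright\cF(\sigma)$; applying $\cF$ to the formula for $[\tau,\sigma]_\star$ and using these two facts together with the previous display yields $\cF([\tau,\sigma]_\star)=[\cF(\tau),\cF(\sigma)]$, completing the argument. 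The only step that is not pure bookkeeping is the identification of $[\cdot,\cdot]_\star$ on $\cP(\cH_{MKW}^g)$ with the derived post-Lie bracket, i.e.\ understanding the MKW product $\star$ well enough to see that $(\cH_{MKW}^g,\star)$ is the enveloping algebra of the free post-Lie algebra equipped with its derived Lie structure; the torsion identity on $\cV$ and the invocation of the earlier results are routine.
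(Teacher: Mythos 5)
Your argument is correct and follows essentially the same route as the paper: reduce the statement to showing that the post-Lie morphism $\cF$ intertwines the $\star$-commutator with the vector-field commutator, express each of these as the derived bracket $\brackets{\cdot,\cdot}+{\triangleright}-{\triangleright}^{\mathrm{op}}$ of the respective post-Lie structure, and conclude by applying the two post-Lie morphism identities. The paper's proof of Proposition \ref{prop:postLie} does exactly this, relying on the torsion formula for the $\cV$ side and on Lemma \ref{lem:postLieTechnical} for the $\cP(\cH_{MKW}^g)$ side. The only real difference is how the identity $f\star g = fg + f\curvearrowright_l g$ for primitive $f$ is handled: you cite it as a standard fact about $\cH_{MKW}^g$ being the enveloping algebra of the free post-Lie algebra, whereas the paper does not take it for granted and proves it by an explicit induction over the free Lie algebra structure of $\cP(\cH_{MKW}^g)$ (Lemma \ref{lem:postLieTechnical}). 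If you were writing this up self-contained you would need to supply that argument, since the leap from ``true for single trees by the definition of $\star$'' to ``true for all primitives'' is exactly the nontrivial content. As a small aside, the paper's statement of $[f,g]=f\star g-g\star f$ contains a sign slip (it writes $g\curvearrowright_l f-f\curvearrowright_l g$), and your version $f\curvearrowright_l g-g\curvearrowright_l f$ is the correct one.
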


\noindent Before we show this proposition, let us present a helpful Lemma:

\begin{lem}\label{lem:postLieTechnical}
    Let $f\in\cP(\cH_{MKW}^g)$ and $g\in\cOF$. Then
    \begin{equation}\label{eq:StarDecomp}
        f\star g = fg + f\curvearrowright_l g\,.
    \end{equation}
\end{lem}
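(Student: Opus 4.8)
The plan is to reduce the statement to a single general combinatorial identity for the product $\star$ — a Guin--Oudom--type formula valid for \emph{arbitrary} forests — and then to specialise it to a primitive $f$, where the deshuffle coproduct of $\cH^g_{MKW}$ has only two terms.

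First I would prove the following formula for all forests $f=\tau_1\cdots\tau_n$ and $g$ in $\cH^g_{MKW}=T(\cOT)$:
\[
f\star g \;=\; \sum_{(f)} f_{(1)}\cdot\bigl(f_{(2)}\curvearrowright_l g\bigr),
\]
where $\delta f=\sum_{(f)} f_{(1)}\otimes f_{(2)}$ is the deshuffle coproduct of $\cH^g_{MKW}$, the central dot is concatenation of forests, and we use the conventions $\one\curvearrowright_l g=g$ and $f\curvearrowright_l\one=\epsilon(f)\one$. To prove it, unpack the definition $f\star g=B^-\bigl(f\curvearrowright_l B^+_1(g)\bigr)$. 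The tree $B^+_1(g)$ has a root decorated $1$ whose ordered children are the trees of $g$, and grafting $\tau_1,\dots,\tau_n$ onto it as leftmost children amounts to choosing, for each $\tau_i$, a target vertex which is either the root $1$ or a vertex of one of the trees of $g$. Group the resulting terms by the set $S\subseteq\{1,\dots,n\}$ of indices whose trees are grafted onto the root. Those $\tau_i$ with $i\in S$, kept in their original relative order, become the new leftmost children of the root, so after applying $B^-$ they get concatenated in front and contribute exactly the subword $f_S$; the remaining $\tau_j$ with $j\notin S$ are grafted into $g$ under the same order constraint, and therefore contribute $f_{S^c}\curvearrowright_l g$. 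Since the deshuffle coproduct is precisely $\delta f=\sum_{S} f_S\otimes f_{S^c}$, summing over $S$ yields the formula; the extreme cases $S=\{1,\dots,n\}$ and $S=\emptyset$ produce the terms $fg$ and $f\curvearrowright_l g$ respectively.

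Granting this, the Lemma is immediate: if $f$ is primitive then $\delta f=f\otimes\one+\one\otimes f$, so the formula collapses to $f\star g=f\cdot(\one\curvearrowright_l g)+\one\cdot(f\curvearrowright_l g)=fg+f\curvearrowright_l g$, which is \eqref{eq:StarDecomp}.

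The main obstacle is the bookkeeping in the first step: one must verify carefully that partitioning the grafting positions according to ``which trees land on the new root'' reproduces exactly the deshuffle coproduct — in particular that the relative-order constraint on trees grafted onto a common vertex matches the order in which the letters of $f$ appear in the subwords $f_S$ and $f_{S^c}$, and that nothing is double counted. A cleaner-looking but ultimately longer alternative would avoid the general formula and instead induct on the grade of the primitive $f$, writing $f$ as a linear combination of iterated brackets $[\tau,h]_\otimes$ of trees (using that $\cP(\cH^g_{MKW})$ is the free Lie algebra over $\cOT$); this still requires understanding how $\star$ interacts with concatenation, so in practice it reduces to the same computation, and I would therefore present the direct combinatorial argument above.
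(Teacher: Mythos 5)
Your argument is correct and takes a genuinely different route from the paper's. The paper inducts on the free-Lie-algebra structure of $\cP(\cH^g_{MKW})$: the base case $f\in\cOT$ is immediate from the definition of $\star$, and the inductive step for $\brackets{f,g}=fg-gf$ passes through an auxiliary formula for $(fg)\star h$ that uses the tracked grafting $\curvearrowright_l^h$ and a positional concatenation $\otimes^h$ to keep the $h$-part of $g\star h$ separate from the $g$-part. You instead prove a stronger Guin--Oudom-type identity, $f\star g=\sum f^{(1)}\bigl(f^{(2)}\curvearrowright_l g\bigr)$ with $\delta f=\sum f^{(1)}\otimes f^{(2)}$ the deshuffle, for \emph{all} forests $f$, by unpacking $B^-\bigl(f\curvearrowright_l B^+_1(g)\bigr)$ and grouping terms by the set $S$ of trees of $f$ that land on the auxiliary root; specialising to primitive $f$, where the deshuffle has only the terms $f\otimes\one$ and $\one\otimes f$, then gives \eqref{eq:StarDecomp}. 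The bookkeeping you flag as the main obstacle does work out: the trees indexed by $S$ reappear after $B^-$ as a prefix in their original relative order, hence as the subword $f_S$; the trees indexed by $S^c$ graft into $g$ at vertices disjoint from the root, so the order constraints on the two groups decouple; and summing over $S$ is precisely the deshuffle. This avoids both the induction and the $\curvearrowright_l^h$, $\otimes^h$ machinery, at the small cost of extending $\curvearrowright_l$ to forest targets (which the paper does implicitly elsewhere). Both proofs rest on the same combinatorial fact, namely partitioning grafting positions according to whether they hit the auxiliary root, but yours packages it as a reusable product formula for all of $\cH^g_{MKW}$, whereas the paper's stays local to the primitive case.
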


\begin{proof}
    We show the claim by induction, where we use that $\cP(\cH_{MKW}^g)$ gets generated by $\cOT$ and the Lie bracket $\brackets{\cdot,\cdot}$. It is clear by the definition of $\star$ that \eqref{eq:StarDecomp} holds, whenever $f\in\cOT$ is a tree. Thus, let us assume that the claim holds for two $f,g\in\cP(\cH_{MKW}^g)$, i.e. $f\star h = fh +f\curvearrowright_l h$ and $g\star h = gh + g\curvearrowright_l h$ for all $h\in\cOF$. We only need to show that it holds for $\brackets{f,g} = fg-gf$. That is, we show that for any $h\in\cOF$, we have $\brackets{f,g}\star h = \brackets{f,g}h + \brackets{f,g}\curvearrowright_l h$. To do so, we introduce a bit more notation: Let $g\in\cOF$ with a subforest $h\subset g$. When we denote by $f\curvearrowright_l^h g$ the sum over all possibilities, to let the trees of $f$ grow out of nodes of $h$ as the left-most child of said node. For example, for $g = \righttree 1234$ and $h = \ladderTwo 13$, we have
    \begin{equation*}
        \tomato{5}\curvearrowright_l^h g = \begin{tree}
            \node[label=right:{\tiny 1}]{}[grow'=up, level distance = 2ex, sibling distance = 2.5ex]
            child{node [label=right:{\tiny 5}] {}}
            child{node [label=right:{\tiny 2}] {}}
            child{node [label=right:{\tiny 3}] {}
                child{node [label=right:{\tiny 4}] {}}
                };
        \end{tree} + \begin{tree}
            \node[label=right:{\tiny 1}]{}[grow'=up, level distance = 2ex, sibling distance = 2.5ex]
            child{node [label=right:{\tiny 2}] {}}
            child{node [label=right:{\tiny 3}] {}
                child{node [label=right:{\tiny 5}] {}}
                child{node [label=right:{\tiny 4}] {}}
                };
            \end{tree}
    \end{equation*}
    Analogously, we introduce the product $f\otimes^h g$ which puts the forest $f$ left to the leftmost tree which contains at least one node from $h$. For example, for $g = \tomato 5\cherry123$ and $h= \ladderTwo13$, we have
    \begin{equation*}
        f\otimes^h g = \tomato 5 f \righttree1234\,.
    \end{equation*}
    The last notation we need for the proof is the following: For a forest $f=f_1\dots f_n$ where $f_1,\dots,f_n\in\cOT$ and $J\subset\{1,\dots,n\}$, we denote with $f_J := f_{j_1}\dots f_{j_m}$, where $J= \{j_1<\dots< j_m\}$ holds. One can now check, that for all forests $f,g,h$, with $f=f_1\dots f_n$ and $g = g_1\dots g_m$ we have that
    \begin{align*}
        (fg) \star h &= \sum_{\substack{J\subset\{1,\dots,n\} \\ G \subset\{1,\dots,m\}}} f_J g_G (f_{J^c}\curvearrowright_l^h (g_{G^c}\curvearrowright_l h))\\
        &=\sum_{J\subset\{1,\dots,n\}} f_J(f_{J^c}\curvearrowright_l^h (g\star h))\,.
    \end{align*}
    Assume that \eqref{eq:StarDecomp} holds for $f,g\in \cP(\cH_{MKW}^g)$ and $h\in\cOF$. Then the above becomes
    \begin{align*}
        (fg)\star h &= \sum_{J\subset\{1,\dots,n\}} f_J(f_{J^c}\curvearrowright_l^h (gh + g\curvearrowright_l h)) \\
        &= g\otimes^h(f\star h) + g\curvearrowright_l^h (f\star h)\\
        &= fg h + g(f\curvearrowright_l h) + f(g\curvearrowright_l h) + (fg)\curvearrowright_l h\,.
    \end{align*}
    We conclude that
    \begin{align*}
        \brackets{f,g}\star h &= fgh-gfh +(fg)\curvearrowright h - (gf)\curvearrowright h \\
        &= \brackets{f,g} h + \brackets{f,g}\curvearrowright h\,,
    \end{align*}
    which shows \eqref{eq:StarDecomp} for $\brackets{f,g}$.
\end{proof}

\begin{proof}[Proof of Proposition \ref{prop:postLie}]
    By the definition of the torsion, we have
    \begin{equation*}
        [U,V] = U\triangleright V - V\triangleright U -T(U,V) = U\triangleright V - V\triangleright U +\brackets{U,V}
    \end{equation*}
    for all $U,V\in\cV$. On the other side, Lemma \ref{lem:postLieTechnical}gives us for all $f,g\in \cP(\cH_{MKW}^g)$
    \begin{equation*}
        [f,g] = f\star g-g\star f = g\curvearrowright_l f-f\curvearrowright_l g + \brackets{f,g}\,.
    \end{equation*}
    It now follows from the post-Lie properties of $\cF$ that
    \begin{align*}
        \cF([f,g]) &= \cF(f)\triangleright\cF(g)-\cF(g)\triangleright\cF(f)+\brackets{\cF(f),\cF(g)}\\
        &=[\cF(f),\cF(g)]\,,
    \end{align*}
    showing that $\cF$ is a Lie map with respect to the Lie brackets $[\cdot,\cdot]$ on both spaces.
\end{proof}

\subsection{A pseudo bialgebra map on $\cH_{MKW}^*$}\label{subsec:MyMap}

In this section, we take a closer look at the linear map $\cF:\cH_{MKW}^g\to\cD$ constructed in the previous section via
\begin{align}
\begin{split}\label{myMap}
    \cF(\tomato i)\psi &= V_i\psi \\
    \cF(\tau_1,\dots,\tau_k)\psi &= \nabla^k \psi (\cF(\tau_1),\dots,\cF(\tau_k)) \\
    \cF([\tau_1,\dots,\tau_k]_i) &= \nabla^k V_i (\cF(\tau_1),\dots,\cF(\tau_k))\,,
\end{split}
\end{align}
for $i=1\dots, n$ and $\tau_1,\dots,\tau_k\in\cOT$ being ordered trees. To our knowledge, this map was first explicitly mentioned in \cite{geometricViewRP}, where it is only briefly discussed for non-planarly branched rough paths. We show that as long as $(M,\nabla)$ is a manifold with a smooth connection, $\cF$ is a pseudo bialgebra map, thus getting rid of the assumption that $\nabla$ is flat and has constant torsion. We further show that if $\nabla$ is flat and has constant torsion, $\cF$ restricted to $\cP(\cH^g_{MKW})$ is a post-Lie algebra map, making it a generalization to the previously discussed post-Lie map.

While $\cF$ is in general ill-defined on $\cH_{GL}$, it is well-defined for a flat and torsion-free connection $\nabla$. In this case, we also show that $\cF$ restricted to $\cP(\cH_{GL})$ is a pre-Lie map, and thus becomes the map discussed in Proposition \ref{prop:PreLieMap}. Finally, we compare it to the map constructed in \cite{emilio22}, where the authors managed to solve non-geometric RDEs on manifolds for level $N=2$ for any connection $\nabla$.

This leads us to the main result of this section:

\begin{thm}\label{theo:main2}
Assume that $(M,\nabla)$ is a manifold equipped with a smooth connection. Then the map $\cF:\cH_{MKW}^g\to\cD$ constructed above is a pseudo bialgebra map. Furthermore, given two non-empty trees $\tau,\sigma\in\cOT$, $\cF$ fulfills:

\begin{equation}\label{graftCov}
    \cF(\tau\curvearrowright_l \sigma) = \cF(\tau)\triangleright\cF(\sigma)\,.
\end{equation}
\end{thm}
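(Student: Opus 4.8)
The plan is to verify directly the two defining conditions of a pseudo bialgebra map --- the algebra morphism property $\cF(f\star g)=\cF(f)\circ\cF(g)$ and the coproduct--duality $\cF(\delta f)(\phi\otimes\psi)=\cF(f)(\phi\cdot\psi)$ --- together with the grafting identity \eqref{graftCov}, by reducing everything to two elementary facts about iterated covariant derivatives. The first is the Leibniz rule for $\nabla$ acting on a contraction: for a tensor field $T$ and vector fields $W,X_1,\dots,X_m$, $\nabla_W\big(T(X_1,\dots,X_m)\big)=(\nabla_W T)(X_1,\dots,X_m)+\sum_{j=1}^m T(X_1,\dots,\nabla_W X_j,\dots,X_m)$, together with the fact --- immediate from the convention \eqref{covariant_der_general} --- that feeding $W$ into the front vector slot of the total covariant derivative recovers $\nabla_W T$, so that $\nabla^{m+1}V_i(W,X_1,\dots,X_m)=(\nabla_W\nabla^m V_i)(X_1,\dots,X_m)$ and likewise for $\nabla^{m+1}\psi$. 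The second is the co--Leibniz rule $\nabla^k(\phi\psi)(W_1,\dots,W_k)=\sum_{I\sqcup J=\{1,\dots,k\}}\nabla^{|I|}\phi(W_I)\,\nabla^{|J|}\psi(W_J)$, with $W_I=(W_i)_{i\in I}$ listed in increasing order, which follows from the first fact by induction on $k$ using $\nabla^k h(W_1,\dots,W_k)=(\nabla_{W_1}\nabla^{k-1}h)(W_2,\dots,W_k)$. I also record that $\cF(\tau)$ is a single vector field for each tree $\tau$ (a contraction of the $(1,n)$-tensor $\nabla^n V_i$), so all contractions below make sense, that $\cF(\one)=\Id$ since $\nabla^0\psi=\psi$, and that on $\cH^g_{MKW}=T(\cOT)$ the coproduct $\delta$ is the deshuffle.

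I first prove \eqref{graftCov}, i.e. $\cF(\tau\curvearrowright_l\sigma)=\cF(\tau)\triangleright\cF(\sigma)=\nabla_{\cF(\tau)}\cF(\sigma)$ for trees $\tau,\sigma$, by induction on the number of nodes of $\sigma$. If $\sigma=\tomato i$ then $\tau\curvearrowright_l\sigma=[\tau]_i$ and $\cF([\tau]_i)=\nabla^1 V_i(\cF(\tau))=\nabla_{\cF(\tau)}V_i=\nabla_{\cF(\tau)}\cF(\tomato i)$. For $\sigma=[\sigma_1,\dots,\sigma_m]_i$, left grafting splits into attaching $\tau$ at the root as the leftmost child or inside one $\sigma_j$: $\tau\curvearrowright_l\sigma=[\tau,\sigma_1,\dots,\sigma_m]_i+\sum_{j=1}^m[\sigma_1,\dots,\tau\curvearrowright_l\sigma_j,\dots,\sigma_m]_i$. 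Applying $\cF$ and using $\cF([\tau,\sigma_1,\dots,\sigma_m]_i)=(\nabla_{\cF(\tau)}\nabla^m V_i)(\cF(\sigma_1),\dots,\cF(\sigma_m))$ together with the inductive hypothesis $\cF(\tau\curvearrowright_l\sigma_j)=\nabla_{\cF(\tau)}\cF(\sigma_j)$, the Leibniz rule for $\nabla$ reassembles the sum precisely as $\nabla_{\cF(\tau)}\big(\nabla^m V_i(\cF(\sigma_1),\dots,\cF(\sigma_m))\big)=\nabla_{\cF(\tau)}\cF(\sigma)$.

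Next the algebra morphism property. I first take a single tree $\tau$ on the left: for a forest $g=\sigma_1\cdots\sigma_m$, Lemma \ref{lem:postLieTechnical} gives $\tau\star g=\tau g+\tau\curvearrowright_l g$ with $\tau\curvearrowright_l g=\sum_{j=1}^m\sigma_1\cdots(\tau\curvearrowright_l\sigma_j)\cdots\sigma_m$, so, rewriting the grafted terms by \eqref{graftCov}, $\cF(\tau\star g)\psi=\nabla^{m+1}\psi(\cF(\tau),\cF(\sigma_1),\dots,\cF(\sigma_m))+\sum_{j=1}^m\nabla^m\psi(\cF(\sigma_1),\dots,\nabla_{\cF(\tau)}\cF(\sigma_j),\dots,\cF(\sigma_m))$; by the Leibniz rule for $\nabla$ this equals $\nabla_{\cF(\tau)}\big(\cF(g)\psi\big)=\cF(\tau)\big(\cF(g)\psi\big)$, i.e. $\cF(\tau\star g)=\cF(\tau)\circ\cF(g)$ (the cases $g=\one$ and $g$ a linear combination being immediate). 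For a general $f$ I induct on the number $k$ of trees in $f$: writing $f=\tau_1\tau_2\cdots\tau_k$, Lemma \ref{lem:postLieTechnical} yields $f=\tau_1\star(\tau_2\cdots\tau_k)-\tau_1\curvearrowright_l(\tau_2\cdots\tau_k)$, where $\tau_1\curvearrowright_l(\tau_2\cdots\tau_k)$ is a combination of forests with only $k-1$ trees. Using associativity of $\star$, the single-tree case, and the inductive hypothesis applied to $\tau_2\cdots\tau_k$ and to the summands of $\tau_1\curvearrowright_l(\tau_2\cdots\tau_k)$, one gets $\cF(f\star g)=\big(\cF(\tau_1)\circ\cF(\tau_2\cdots\tau_k)-\cF(\tau_1\curvearrowright_l(\tau_2\cdots\tau_k))\big)\circ\cF(g)$ for all $g$; specialising to $g=\one$ identifies the bracketed operator with $\cF(f)$, whence $\cF(f\star g)=\cF(f)\circ\cF(g)$.

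Finally the coproduct--duality. For $f=\tau_1\cdots\tau_k$ one has $\delta f=\sum_{I\sqcup J=\{1,\dots,k\}}\tau_I\otimes\tau_J$, where $\tau_I$ is the sub-forest indexed by $I$ in increasing order, so $\cF(\delta f)(\phi\otimes\psi)=\sum_{I\sqcup J}\cF(\tau_I)\phi\cdot\cF(\tau_J)\psi=\sum_{I\sqcup J}\nabla^{|I|}\phi\big((\cF(\tau_i))_{i\in I}\big)\,\nabla^{|J|}\psi\big((\cF(\tau_j))_{j\in J}\big)$, which by the co--Leibniz rule equals $\nabla^k(\phi\psi)(\cF(\tau_1),\dots,\cF(\tau_k))=\cF(f)(\phi\cdot\psi)$; linearity extends this to all of $\cH^g_{MKW}$. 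Combined with $\cF(\one)=\Id$ this shows $\cF$ is a pseudo bialgebra map, and \eqref{graftCov} was obtained in the second paragraph. I expect the only genuinely delicate point to be the bookkeeping of orderings: since neither $\nabla^k\psi$ nor $\nabla^k V_i$ is symmetric for a general connection, one must keep the order in which the $\cF(\tau_j)$ enter each covariant derivative aligned with the order of the trees in the ordered forests, and in the grafting step make sure the new covariant slot created by attaching $\tau$ at a root corresponds to the front slot in \eqref{covariant_der_general} --- this ordering is exactly what fails for unordered trees, which is why the construction is well defined on $\cH^g_{MKW}$ but not on $\cH_{GL}$.
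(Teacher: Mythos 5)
Your proposal is correct and follows essentially the same route as the paper: the same induction on the node count of $\sigma$, driven by the Leibniz rule for $\nabla$ acting on a contraction, gives the grafting identity and the $\star$-morphism (bootstrapped through Lemma~\ref{lem:postLieTechnical} to general forests), and an induction on the number of trees establishes the coproduct duality. The only cosmetic difference is that you isolate the co-Leibniz rule as a standalone lemma and prove \eqref{graftCov} first, whereas the paper interleaves the graft and $\star$ computations via the identity $U\circ W=\nabla_U W+\nabla^2(\cdot)(U,W)$; the underlying steps are the same.
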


\noindent To ease notation in the following proofs, we use the notation $V_f := \cF(f)$ for all forests $f\in\cOF$.

\begin{proof}
We start by showing the relations of $V_{\tau\star\sigma} = V_\tau\circ V_\sigma$ and $V_{\tau\curvearrowright_l\sigma} = \nabla_{V_\tau} V_\sigma$ for two trees $\tau,\sigma\in\cOT$. Since the identity operator is not a vector field, $\nabla_{V_{\tau}} V_{\sigma}$ is not a well-defined object for empty trees. On the other hand, $V_\tau\circ V_\sigma = V_{\tau\star\sigma}$ is obvious, if $\tau$ or $\sigma$ is empty.

We show the claim by induction over the number of vertices in $\sigma$. For the induction start, let $\sigma = \tomato i$ for some $i\in\{1,\dots, n\}$. Then we immediately have
\begin{equation*}
    \nabla_{V_\tau} V_i = V_{[\tau]_i} = V_{\tau\curvearrowright_l\sigma}.
\end{equation*}
On the other hand, \eqref{covariant_der_general} gives us for any vector fields $U,W$ the formula $U\circ W = \nabla_U W + \nabla^2(\cdot)(U,W)$. This leads to:
\begin{equation*}
V_\tau\circ V_i = \nabla_{V_\tau} V_i + \nabla^2(\cdot)(V_\tau,V_i) = V_{[\tau]_i} + V_{\tau\tomato i} = V_{\tau\star\tomato i}\,,
\end{equation*}
showing the induction start. Now let $\sigma = [\sigma_1,\dots,\sigma_k]_i$ have $k\ge1$ children. We consider the tree $[\tau,\sigma_1,\dots,\sigma_k]_i$ and calculate
\begin{align*}
    V_{[\tau,\sigma_1,\dots,\sigma_k]_i} &= \nabla_{V_\tau}(\nabla^k V_i)(V_{\sigma_1},\dots, V_{\sigma_k}) \\
    &= V_\tau\circ V_\sigma - \nabla^2(\cdot)(V_\tau,V_\sigma)-\sum_{j=1}^k \nabla^k V_i(V_{\sigma_1},\dots,\nabla_{V_\tau}V_{\sigma_j},\dots,V_{\sigma_k})\,.
\end{align*}
Rearranging this expression and using the induction hypothesis on $\tau\curvearrowright_l \sigma_j$ leads to:
\begin{equation*}
    V_\tau\circ V_\sigma = V_{[\tau,\sigma_1,\dots,\sigma_k]_i+\tau\sigma+\sum_{j=1}^k [\sigma_1,\dots,\tau\curvearrowright_l\sigma_j,\dots,\sigma_k]_i} = V_{\tau\star\sigma}\,.
\end{equation*}
It now easily follows that
\begin{equation*}
    \nabla_{V_\tau}V_\sigma = V_\tau\circ V_\sigma -\nabla^2(\cdot)(V_\tau,V_\sigma) = V_{\tau\star\sigma-\tau\sigma} = V_{\tau\curvearrowright_l\sigma}\,.
\end{equation*}
We continue to proof that $V_{f_1}\circ V_{f_2} = V_{f_1\star f_2}$ holds for all forests $f_1,f_2\in \cOF$. To get started let $\tau$ be a tree and $f_2 = \sigma_1\dots\sigma_k$ be a forest. In that case, we get that for any smooth $\phi$:
\begin{align*}
    V_{\tau f_2}\phi &= \nabla_{V_\tau}(\nabla^n\phi)(V_{\sigma_1},\dots,V_{\sigma_k})\\
    &= V_\tau\circ V_{f_2} \phi - \sum_{i=1}^k\nabla^{k}\phi(V_{\sigma_1},\dots,\nabla_{V_\tau}V_{\sigma_i},\dots, V_{\sigma_k}) \\
    &= V_\tau\circ V_{f_2}\phi - V_{\sigma_1\dots\tau\curvearrowright_l\sigma_i\dots\sigma_k}\,.
\end{align*}
Rearranging the terms again leads to $V_\tau\circ V_{f_2} = V_{\tau\star f_2}$. Finally, let $f_1=\tau_1\dots\tau_m$ also be a forest. In that case,
\begin{align*}
    V_{f_1}\circ V_{f_2} \phi &= \nabla^m (V_{f_2}\phi)(V_{\tau_1},\dots, V_{\tau_m}) \\
    &=V_{\tau_1}\circ(\nabla^{m-1} V_{f_2} \phi)(V_{\tau_2},\dots,V_{\tau_m})-\sum_{i=2}^m \nabla^{m-1}(V_{f_2}\phi)(V_{\tau_2},\dots,\nabla_{V_{\tau_1}}V_{\tau_i},\dots,V_{\tau_m})\\
    &= V_{\tau_1\star(\tau_2\dots\tau_m)\star f_2 - \sum_{i=2}^m (\tau_2\dots(\tau_1\curvearrowright_l\tau_i)\dots\tau_m)\star f_2}\phi = V_{f_1\star f_2}\phi\,,
\end{align*}
where we again use an inductive argument over the number of trees in $f_1$. \\[.5\baselineskip]

\noindent All that remains to prove, is that for two smooth functions $\phi,\psi$ and any forest $f\in\cOF$:
\begin{equation*}
    V_{\Delta f}(\phi\otimes\psi) = V_{f}(\phi\cdot\psi)\,,
\end{equation*}
where $\Delta$ is the coshuffle, which we are proving by yet another induction over the number m of trees in $f=\tau_1\dots\tau_m$. For $m=1$, $V_f= V_{\tau_1}$ is just a vector field, so the result follows from $V_f(\psi\phi) = (Vf\psi)\phi+(\psi)V_f\psi = (V_f\otimes 1+ 1\otimes V_f)(\psi\otimes\phi)$.

Now consider $m\ge 2$. We use Sweedler's notation: $\Delta(\tau_2,\dots,\tau_m) = \sum_{(f)}f^{(1)}\otimes f^{(2)}$. Observe that $\sum_{i=2}^m\Delta(\tau_2\dots(\tau_1\curvearrowright_l\tau_i)\dots\tau_m) = \sum_{(\bar f)}(\tau_1\curvearrowright_l \bar f^{(1)}\otimes \bar f^{(2)}+\bar f^{(1)}\otimes\tau_1\curvearrowright_l \bar f^{(2)})$ holds, where $\bar f = \tau_2\dots\tau_m$. With this in mind, we see that the following holds:

\begin{align*}
    V_f(\phi\psi) &= V_{\tau_1}(\nabla^{m-1}(\phi\psi)(V_{\tau_2},\dots,V_{\tau_m}))-\sum_{i=2}^m\nabla^{m-1}(\phi\psi)(V_{\tau_2},\dots,\nabla_{V_{\tau_1}}V_{\tau_i},\dots,V_{\tau_m}) \\
    &= V_{\tau_1}(V_{\Delta(\tau_2\dots\tau_m)}(\phi\otimes\psi))-V_{\sum_{(f)}(\tau\curvearrowright_l f^{(1)}\otimes f^{(2)}+f^{(1)}\otimes\tau\curvearrowright_l f^{(2)})}(\phi\otimes\psi)\\
    &= V_{\sum_{(f)}(\tau\star f^{(1)}\otimes f^{(2)}+f^{(1)}\otimes \tau\star f^{(2)} - \tau\curvearrowright_l f^{(1)}\otimes f^{(2)}+f^{(1)}\otimes \tau\curvearrowright_l f^{(2)})} (\phi\otimes\psi)\\
    &=V_{\tau f^{(1)}\otimes f^{(2)}+f^{(1)}\otimes \tau f^{(2)}}(\phi\otimes\psi)\\
    &=V_{\Delta f}(\phi\otimes\psi).
\end{align*}

\end{proof}

\subsubsection{Consistency} We still need to check that $\cF$ coincides with the elementary differentials constructed in Sections \ref{subsec:elementaryDiffTensor}-\ref{subsec:elementarryDiffMKW}. To do so, we show that
\begin{itemize}
    \item If $(M,\nabla)$ is flat and torsion-free, $\cF$ is well-defined on the Grossman-Larson algebra. In this case, $\cF\vert_{\cP(\cH_{GL})}$ is equal to the unique pre-Lie map $\tilde\cF$ generated by $\tilde\cF(\tomato i) = V_i$.
    \item If $(M,\nabla)$ is flat and has constant torsion, $\cF\vert_{\cP(\cH^g_{MKW})}$ is equal to the free post-Lie algebra map $\hat\cF$ generated by $\hat\cF(\tomato i) = V_i$.
    \item In \cite{emilio22}, Armstrong et al solve \eqref{eq:RDE} for level $N=2$ in the so far most general setting. We show that even in this case, their solution agrees with our approach with the above-constructed map $\cF$.
\end{itemize}
Let us start with the flat and torsion-free case, so that $(\cF,\triangleright)$ is a pre-Lie algebra. By standard results, we know that there is a set of coordinates, such that the Christoffel symbols vanish locally. This immediately implies that for all $k\ge 1$ and a permutation $\sigma:\{1,\dots,k\}\to\{1,\dots,k\}$, we have
\begin{equation}\label{eq:commuting}
\nabla^k \cdot (U_1,\dots, U_k) = \nabla^k \cdot (U_{\sigma_1},\dots, U_{\sigma_k})
\end{equation}
for any vector fields $U_1,\dots, U_k\in\cV$. Thus, $\cF:\cH_{GL}\to\cD$ is well-defined. We can now show the following:

\begin{thm}\label{thm:flatCase}
    Assume $(M,\nabla)$ is flat and torsion-free. Then $\cF\vert_{\cP(\cH_{GL})}:(\cP(\cH_{GL}),\curvearrowright)\to(\cV,\triangleright)$ is a pre-Lie algebra map. Thus, $\cF = \tilde\cF$ on $\cP(\cH_{GL})$.
\end{thm}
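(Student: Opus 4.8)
The plan is to show that $\cF\vert_{\cP(\cH_{GL})}$ intertwines the grafting product $\curvearrowright$ on $\cP(\cH_{GL})$ with the product $\triangleright = \nabla$ on $\cV$; once this is established, the equality $\cF = \tilde\cF$ follows immediately from the universal property of the free pre-Lie algebra $(\cP(\cH_{GL}),\curvearrowright)$ (since $\tilde\cF$ is \emph{defined} as the unique pre-Lie map extending $\tilde\cF(\tomato i) = V_i = \cF(\tomato i)$, and two pre-Lie maps agreeing on generators coincide). So the real content is the identity $\cF(\tau\curvearrowright\sigma) = \nabla_{\cF(\tau)}\cF(\sigma)$ for all trees $\tau,\sigma\in\cUT$.

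First I would transport the key identity already proven in Theorem \ref{theo:main2} for the ordered (MKW) setting: there we showed $\cF(\tau\curvearrowright_l\sigma) = \cF(\tau)\triangleright\cF(\sigma) = \nabla_{\cF(\tau)}\cF(\sigma)$ for ordered trees. The point is that in the flat torsion-free case \eqref{eq:commuting} holds, so the order of arguments in $\nabla^k\cdot(U_1,\dots,U_k)$ is irrelevant, and the ordered elementary differential $\cF_{\mathrm{ord}}(\tau)$ descends to a well-defined map on unordered trees, which is exactly the $\cF$ on $\cH_{GL}$ described in \eqref{myMap}. Under this quotient, $\curvearrowright_l$ on $\cOT$ maps onto $\curvearrowright$ on $\cUT$ (left-grafting becomes ordinary grafting once one forgets the planar order — each summand of $\tau\curvearrowright\sigma$ is obtained by forgetting the ``left-most child'' constraint). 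Hence the identity $\cF(\tau\curvearrowright_l\sigma) = \nabla_{\cF(\tau)}\cF(\sigma)$ from Theorem \ref{theo:main2} descends to $\cF(\tau\curvearrowright\sigma) = \nabla_{\cF(\tau)}\cF(\sigma)$ on $\cUT$.

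Alternatively — and this may be cleaner to write — I would give a direct induction over the number of vertices of $\sigma$, mirroring the inductive computation in the proof of Theorem \ref{theo:main2} but using the unordered recursion $\cF([\tau_1,\dots,\tau_k]_i) = \nabla^k V_i(\cF(\tau_1),\dots,\cF(\tau_k))$ and the relation $V\circ W = \nabla_V W + \nabla^2(\cdot)(V,W)$. The base case $\sigma = \tomato i$ gives $\nabla_{V_\tau}V_i = V_{[\tau]_i} = V_{\tau\curvearrowright\sigma}$ directly. For $\sigma = [\sigma_1,\dots,\sigma_k]_i$ one expands $\nabla_{V_\tau}(\nabla^k V_i)(V_{\sigma_1},\dots,V_{\sigma_k})$ via the Leibniz-type rule \eqref{covariant_der_general}, uses the induction hypothesis on each $\tau\curvearrowright\sigma_j$, and recognises the resulting sum of forests/trees as exactly the terms of $\tau\curvearrowright\sigma$ — here flatness and torsion-freeness \eqref{eq:commuting} are what let us ignore argument order and what make $\cF$ well-defined on unordered trees in the first place.

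The main obstacle is purely bookkeeping: making precise that the combinatorial grafting $\tau\curvearrowright\sigma = \sum$ (grow $\tau$ out of each vertex of $\sigma$) matches the sum of trees appearing after expanding the covariant-derivative Leibniz rule, and being careful that the symmetry/identification of unordered trees is consistent on both sides (the forest-valued ``$\tau\sigma$'' terms cancel correctly, exactly as in Theorem \ref{theo:main2}). I expect no genuine difficulty beyond this; everything else is a routine consequence of \eqref{eq:commuting}, the already-established Theorem \ref{theo:main2}, and the freeness of $(\cP(\cH_{GL}),\curvearrowright)$.
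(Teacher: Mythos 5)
Your proposal is correct, and your second (``alternatively'') route is essentially the paper's proof: the paper's argument is a one-liner observing that \eqref{graftCov} holds for unordered trees by re-running the inductive computation from Theorem \ref{theo:main2} with $\curvearrowright_l$ replaced by $\curvearrowright$, which you spell out in more detail (correctly noting that flatness and torsion-freeness via \eqref{eq:commuting} are needed for $\cF$ to even be well-defined on $\cUT$, and that the argument-order independence is what lets the ordered computation go through verbatim). Your primary route — pushing the identity $\cF(\tau\curvearrowright_l\sigma)=\cF(\tau)\triangleright\cF(\sigma)$ forward along the ``forget planarity'' map $\cOT\to\cUT$ — is a genuinely different packaging of the same computation, and it does work for the tree-on-tree case since both $\curvearrowright_l$ and $\curvearrowright$ sum over vertices of $\sigma$ with multiplicity one; it buys a cleaner reuse of Theorem \ref{theo:main2} at the cost of having to verify that the ordered elementary differentials factor through the forgetting map (which is exactly \eqref{eq:commuting}) and that no symmetry-factor bookkeeping goes wrong, so it is no shorter than just redoing the induction. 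Your opening observation that $\cF=\tilde\cF$ then follows automatically from the universal property of the free pre-Lie algebra is correct and matches the paper's implicit reasoning.
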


\begin{proof}
    We only need to show for any trees $\tau,\sigma\in \cP(\cH_{GL}) = \myspan(\cUT)$, $\cF(\tau\curvearrowright\sigma) = \cF(\tau)\triangleright\cF(\sigma)$. Note that \eqref{graftCov} holds for unordered trees (if one replaces $\curvearrowright_l$ with $\curvearrowright$) by the same proof as for ordered trees, showing the claim.
\end{proof}

\noindent Let us now consider the case in that $(M,\nabla)$ is flat and has constant torsion, i.e. $\nabla_Z T = 0$ for all $Z\in\cV$. Then $(\cV,\brackets{\cdot,\cdot},\triangleright)$, where $\brackets{U,V} = -T(U,V)$, forms a post-Lie algebra. We claim that in this case, $\cF$ restricted to the primitive elements of $\cH_{MKW}^g$ is a post-Lie algebra map and thus equal to $\hat\cF$. To do so, we need to show that it preserves $\brackets{\cdot,\cdot}$ as well as the product $\curvearrowright_l$. Using Lemma \ref{lem:postLieTechnical}, we see that for $f,g\in\cP(\cH_{MKW}^g)$, we have
\[
    [f,g] = f\curvearrowright_l g- g\curvearrowright_l l + \brackets{f,g}\,,
\]
while for two vector fields $U,V\in\cV$, we have
\[
    [U,V] = U\triangleright V - V\triangleright U + \brackets{U,V}\,.
\]
Thus, showing that $\cF\vert_{\cP(\cH_{MKW}^g)}$ is a post-Lie algebra map is equivalent to showing that for all $\tau,\sigma\in \cP(\cH_{MKW}^g)$, we have
\begin{align*}
    \cF(\tau\curvearrowright_l \sigma) &= \cF(\tau)\triangleright\cF(\sigma)\\
    \cF([\tau,\sigma]) &= [\cF(\tau),\cF(\sigma)]\,.
\end{align*}
We show this in the following theorem:

\begin{thm}
Let $\nabla$ be flat with constant torsion. Then $\cF\vert_{\cP(\cH_{MKW}^g)} :\cP(\cH_{MKW}^g)\to\cV$ is a post-Lie algebra map. It follows that $\cF = \hat\cF$.
\end{thm}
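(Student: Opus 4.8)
The plan is to check directly that $\cF\vert_{\cP(\cH^g_{MKW})}$ is a morphism of post-Lie algebras; once this is done, $\cF=\hat\cF$ follows from the universal property of the free post-Lie algebra (the $\tomato 1,\dots,\tomato n$ generate $\cP(\cH^g_{MKW})$ as a post-Lie algebra), since both maps send these generators to $V_1,\dots,V_n$. As recorded just before the theorem, being a post-Lie morphism is equivalent to the two identities $\cF(x\curvearrowright_l y)=\cF(x)\triangleright\cF(y)$ and $\cF([x,y])=[\cF(x),\cF(y)]$ for all $x,y\in\cP(\cH^g_{MKW})$. The second is immediate: by Theorem \ref{theo:main2} $\cF$ is a pseudo bialgebra map, hence an algebra homomorphism $(\cH^g_{MKW},\star)\to(\cD,\circ)$, so it preserves the $\star$-commutator. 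The whole content is therefore the first identity, which we abbreviate
\begin{equation*}
    (\ast)\colon\qquad \cF(x\curvearrowright_l y)=\cF(x)\triangleright\cF(y),\qquad x,y\in\cP(\cH^g_{MKW}).
\end{equation*}
Note at the outset that, since $\cF$ already preserves $[\cdot,\cdot]$, the commutator identities $[f,g]=f\curvearrowright_l g-g\curvearrowright_l f+\brackets{f,g}$ on $\cP(\cH^g_{MKW})$ (from Lemma \ref{lem:postLieTechnical}) and $[U,V]=U\triangleright V-V\triangleright U+\brackets{U,V}$ on $\cV$ show that, for a fixed pair $a,b$, the relation $\cF(\brackets{a,b})=\brackets{\cF(a),\cF(b)}$ is equivalent to $(\ast)$ holding for $(a,b)$ and for $(b,a)$; this lets us feed bracket-preservation back into the induction for $(\ast)$ without circularity.

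We prove $(\ast)$ by induction on the combinatorial complexity of $x$ and $y$. The base case $x,y\in\cOT$ is exactly \eqref{graftCov} of Theorem \ref{theo:main2}. For the inductive step, since $(\cP(\cH^g_{MKW}),\brackets{\cdot,\cdot},\curvearrowright_l)$ is the free post-Lie algebra over $\cOT$, every primitive element is a linear combination of iterated $\brackets{\cdot,\cdot}$-brackets of trees, so by linearity we may assume $x$ or $y$ is such a bracket. If $y=\brackets{y_1,y_2}$, apply the first post-Lie relation (that $x\curvearrowright_l(\cdot)$ and $\cF(x)\triangleright(\cdot)$ are derivations of the respective brackets) in both $\cP(\cH^g_{MKW})$ and $\cV$:
\begin{align*}
    \cF\big(x\curvearrowright_l\brackets{y_1,y_2}\big)
    &=\cF\big(\brackets{x\curvearrowright_l y_1,\,y_2}\big)+\cF\big(\brackets{y_1,\,x\curvearrowright_l y_2}\big)\\
    &=\brackets{\cF(x)\triangleright\cF(y_1),\,\cF(y_2)}+\brackets{\cF(y_1),\,\cF(x)\triangleright\cF(y_2)}\\
    &=\cF(x)\triangleright\brackets{\cF(y_1),\cF(y_2)}=\cF(x)\triangleright\cF\big(\brackets{y_1,y_2}\big),
\end{align*}
where the second equality uses the inductive hypothesis for $(\ast)$ on $(x,y_1),(x,y_2)$ together with bracket-preservation on $(x\curvearrowright_l y_1,y_2)$ and $(y_1,x\curvearrowright_l y_2)$ — available inductively through the equivalence above — and the last equality uses bracket-preservation on $(y_1,y_2)$. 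The symmetric situation $x=\brackets{x_1,x_2}$ (needed when $y$ is a tree) is handled by expanding $\brackets{x_1,x_2}\triangleright y=a_\triangleright(x_1,x_2,y)-a_\triangleright(x_2,x_1,y)$ with the second post-Lie relation in both algebras and reducing, in the same style, to instances of $(\ast)$ with structurally simpler arguments.

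The main obstacle is making this induction well-founded: both $\curvearrowright_l$ and $\brackets{\cdot,\cdot}$ preserve the vertex grading, so the number of vertices is not a quantity that strictly decreases in the reduction coming from the first post-Lie relation. The fix is to organise the induction primarily on the bracket-length of $y$, using the auxiliary structural fact $\mathrm{bl}(x\curvearrowright_l y)\le\mathrm{bl}(y)$ for all $x,y\in\cP(\cH^g_{MKW})$ — itself a short induction on $\mathrm{bl}(x)$ via the second post-Lie relation, starting from the observation that $\cP(\cH^g_{MKW})\curvearrowright_l\myspan(\cOT)\subset\myspan(\cOT)$ — so that all arguments appearing on the right above, such as $x\curvearrowright_l y_1$ and $y_2$, strictly decrease $\mathrm{bl}$ in the second slot; in the base case $y\in\myspan(\cOT)$ one runs a secondary induction on $\mathrm{bl}(x)$ (or the vertex count of $x$). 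The only real work is the bookkeeping: verifying that every $(\ast)$- and bracket-preservation instance invoked sits strictly below the current instance in this lexicographic order. Once $(\ast)$ holds on all of $\cP(\cH^g_{MKW})$, the map $\cF\vert_{\cP(\cH^g_{MKW})}$ preserves both $\curvearrowright_l$ and $\brackets{\cdot,\cdot}$, hence is a post-Lie morphism, and since it agrees with $\hat\cF$ on the generators we conclude $\cF=\hat\cF$.
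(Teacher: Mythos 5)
Your proof is correct and follows the same overall strategy as the paper: reduce to showing $\cF(x\curvearrowright_l y)=\cF(x)\triangleright\cF(y)$ on $\cP(\cH^g_{MKW})$, get the tree case from Theorem \ref{theo:main2}, and induct on bracket complexity using the two post-Lie axioms (the derivation property from constant torsion for $y=\brackets{y_1,y_2}$, and the associator/flatness identity for $x=\brackets{x_1,x_2}$). The differences are cosmetic. For the $x=\brackets{x_1,x_2}$ case the paper uses a slightly slicker route: since $\cF$ is already a $[\cdot,\cdot]$-Lie map and $\brackets{f,g}$ differs from $[f,g]$ by $\pm(f\curvearrowright_l g)\mp(g\curvearrowright_l f)$, it suffices to verify $\cF([f,g]\curvearrowright_l h)=\cF([f,g])\triangleright\cF(h)$, which drops out of flatness together with the grafting identity $f\curvearrowright_l(g\curvearrowright_l h)=(f\star g)\curvearrowright_l h$; you instead unfold the associator directly, which is equivalent but a bit more to write. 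You are also more explicit than the paper about why the induction is well-founded — the paper inducts on the maximum word length $gr(\cdot)$ of a summand without spelling out the bookkeeping; your bracket-length measure and the structural observation $\mathrm{bl}(x\curvearrowright_l y)\le\mathrm{bl}(y)$ are doing the same job more carefully. One small imprecision: you call bracket-preservation for $(a,b)$ \emph{equivalent} to $(\ast)$ holding for $(a,b)$ and $(b,a)$; the comparison of the two commutator expansions only gives that the difference $\cF(a\curvearrowright_l b)-\cF(b\curvearrowright_l a)$ equals $\cF(a)\triangleright\cF(b)-\cF(b)\triangleright\cF(a)$, which is implied by (not equivalent to) both instances of $(\ast)$. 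You only use the implication direction, so this does not affect the argument, but the wording should be softened.
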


\begin{proof}
    Theorem \ref{theo:main2} already gives us that $\cF$ is a Lie algebra map with respect to the Lie brackets $[\cdot,\cdot]$ on $\cP(\cH_{MKW}^g)$ and $\cV$, so we only need to show that $\cF(\tau\curvearrowright_l \sigma) = \cF(\tau)\triangleright\cF(\sigma)$. Furthermore, the Theorem already shows it for trees $\tau,\sigma\in\cOT$. Since any $f\in\cP(\cH_{MKW}^g)$ is a finite sum over forests, we can set $gr(f)$ to be the highest number of trees in any summand of $f$ and use induction over this number. So assume that the claim has already been shown for any $f,g$ such that $gr(f),gr(g)\le n$ for some $n\in\NN$. Since $\cP(\cH_{MKW}^g)$ is the free Lie algebra with respect to the Lie bracket $\brackets{\cdot,\cdot}$, it suffices to show for any $f,g,h$ with $gr(f),gr(g),gr(h)\le n$ that
    \begin{align}
        \cF(\brackets{f,g}\curvearrowright_l h) &= \cF(\brackets{f,g})\triangleright\cF(h) \label{eq:toShow1}\\
        \cF(h\curvearrowright_l\brackets{f,g}) &= \cF(h)\triangleright\cF(\brackets{f,g})\,.\label{eq:toShow2}
    \end{align}
    Furthermore, since $\brackets{f,g} = [f,g]-f\curvearrowright_l g- g\curvearrowright_l f$, we can switch the Lie brackets and show the following instead of \eqref{eq:toShow1}:
    \begin{equation}\label{eq:toShow3}
        \cF([f,g]\curvearrowright_l h) = \cF([f,g])\triangleright\cF(h).
    \end{equation}
    To do so, we recall the definition of flat and torsion-free:

    \begin{itemize}
        \item $(M,\nabla)$ is flat, if and only if
        \begin{equation*}
            [X,Y]\triangleright Z = X\triangleright(Y\triangleright Z) - Y\triangleright(X\triangleright Z)
        \end{equation*}
        holds for all $X,Y,Z\in\cV$\,.

        \item $(M,\nabla)$ has constant torsion, if and only if for all $\nabla_ZT = 0$. One easily calculates that this is equivalent to
        \begin{equation*}
            Z\triangleright T(X,Y) = T(Z\triangleright X,Y)+T(X, Z\triangleright Y),
        \end{equation*}
        for all $X,Y,Z\in\cV$, where $T(X,Y) = \nabla_X Y-\nabla_Y X-[X,Y]$.
    \end{itemize}

    \noindent Let us start by showing \eqref{eq:toShow3}. The flatness gives us:
    \begin{align*}
        [\cF(f),\cF(g)]\triangleright\cF(h) &= \cF(f)\triangleright(\cF(g)\triangleright\cF(h))- \cF(g)\triangleright(\cF(f)\triangleright\cF(h)) \\
        &= \cF(f\curvearrowright_l(g\curvearrowright_l h) - g\curvearrowright_l(f\curvearrowright_l h))\\
        &= \cF((f\star g -g\star f) h) \\
        &= \cF([f,g] h)\,,
    \end{align*}
    where we have used the known connection between grafting and Grossman-Larson products $f\curvearrowright_l(g\curvearrowright_l h) = (f\star g)\curvearrowright_l h$ in $\cH_{MKW}^g$, see \cite{curry20} for reference. To show \eqref{eq:toShow2}, we use the constant torsion as well as $\cF(\brackets{f,g}) = \brackets{\cF(f),\cF(g)}$ by induction hypothesis to see:
    \begin{align*}
        \cF(h)\triangleright\cF(\brackets{f,g}) &= \cF(h)\triangleright\brackets{\cF(f)\cF(g)}\\
        &= \brackets{\cF(h)\triangleright\cF(f),\cF(g)} + \brackets{\cF(f),\cF(h)\triangleright\cF(g)}\\
        &= \cF(\brackets{h\curvearrowright_l f,g} + \brackets{f,\curvearrowright_l g}) \\
        &= \cF(h\curvearrowright_l\brackets{f,g})\,,
    \end{align*}
    which finishes the proof. 
\end{proof}

\noindent Finally, let us discuss the relation to the results from \cite{emilio22}: In the paper, the authors solve the RDE \eqref{eq:RDE} for non-geometric rough paths on manifolds in the case $N=2$, by constructing rough integrals in each coordinate chart in a coordinate independent way. They manage to solve the equation in this case for the $\cH_{GL}$ algebra and any manifold $M$ for any connection $\nabla$. Note that they take their analysis a step further and discuss the case, in which $X$ lives on a manifold, as well. Since we restrict ourselves to the case in which $X$ lives in flat space, we only show that our approach gives the same solution in that case, in which their formula (3.29) simplifies to
\begin{align*}
    Y_{s,t}^k &\approx V_i^k(Y_s) \XX^i_{s,t} + V_i^\alpha\partial_\alpha V_j^k(Y_s) \XX^{ij}_{s,t} -\frac 12 V^i_\alpha V^j_\beta\Gamma_{ij}^k(Y_s) (\XX^{\alpha}_{s,t}\XX^\beta_{s,t} - \XX^{\alpha\beta}_{s,t}-\XX^{\beta\alpha}_{s,t})\\
    &= V_i^k(Y_s) \XX^{\tomato i}_{s,t} + (\tilde\nabla_{V_i} V_j)^k(Y_s) \XX^{\ladderTwo ji}_{s,t} +\frac 12\tilde\nabla^2 \phi^k(V_i,V_j)\XX^{\tomato i\tomato j} \,,
\end{align*}
where $\phi^k$ is the $k-th$ coordinate function and $\tilde\nabla$ is the torsion-free version of $\nabla$ (i.e. $\tilde\Gamma_{ij}^k = \frac 12(\Gamma_{ij}^k+\Gamma_{ji}^k)$). The $\approx$ means that the two sides agree up to an error of order $o(\abs{t-s})$. It follows that our Davie's formula \eqref{ineq:Davie}
\begin{align*}
    \phi(Y_t) &\approx \sum_{\abs\tau\le 2} \frac1{sg(\tau)}\cF(\tau)\phi(Y_s)\XX^\tau_{s,t} = \cF(\XX_{s,t})\phi(Y_s)
\end{align*}
holds, as long as we replace the connection $\nabla$ with $\tilde\nabla$. (Recall that the symbols in $\cH_{GL}$ where chosen in such a way, that $\scalar{\tau,\sigma} = sg(\tau)\delta_{\tau,\sigma}$. Thus, one needs to divide by $sg(\tau)$ in the sum.) This mirrors our analysis of $\cF$: While it always works on $\cH_{MKW}^*$, we can only solve the RDE on a manifold with a rough path in $\cH_{GL}$, if we have the commuting property \eqref{eq:commuting} holds. Having a torsion-free connection $\tilde\nabla$ ensures this property for $k\le 2$, whereas more general $k$ also require a flat connection. 

\begin{rem}
    The most interesting observation in the level $N=2$ case is that one can start with a general connection and construct a pseudo bialgebra map $\cF$ by replacing $\nabla$ with $\tilde\nabla$. This raises the question if it is possible to construct a pseudo bialgebra map for higher levels with weaker conditions than flatness and torsion-freeness. It should be noted that such a map could no longer fulfill the condition $\cF(\tau\curvearrowright\sigma) = \cF(\tau)\triangleright\cF(\sigma)$ for any trees $\tau,\sigma\in\cUT$, since $\cP(\cH_{GL})$ is the free pre-Lie algebra, so the above condition forces $\cF$ to be as in \eqref{myMap}. However, as long as one does not mind losing that condition, it is an open problem whether one can construct a pseudo bialgebra map from a general connection $\nabla$. In fact, \cite{ferrucci22} succeded at constructing rough integrals on manifolds, giving us high hope that it is possible.
\end{rem}

\section{Discussion of rough paths on manifolds}\label{sec:RPonManifolds}

\noindent One of the critical aspects of geometric rough paths theory is, that the solution to an RDE can be seen as a rough path itself. For an RDE on a manifold $M$, it follows that the solution to \eqref{eq:RDE} should give us a rough path on a manifold, a concept which was introduced in \cite{RPonManifolds} and refined in \cite{RPonManifoldsNewDef}. In this section, we want to analyze whether there is a canonical rough path living over our solution $Y_t$.

For a non-geometric rough path, we can almost immediately answer this in the negative: At the current state, it is unclear how to define a rough path over a general Hopf algebra on a manifold. Even in the branched case, one needs an additional structure called a bracket extension \cite{bracketExtension}. 

\begin{rem}\label{rem:branched}
    It should be noted that \cite{ferrucci22} successfully constructed branched rough paths on manifolds with said bracket extension. Furthermore, \cite{branchedIto} Remark 2.15 manages to construct a canonical bracket extension, thus making sense of branched rough paths on manifolds in a canonical way. It would be an interesting follow-up project to see how the results of this section can be transferred to branched rough paths, and how this connects to pseudo bialgebra maps.
\end{rem}

\noindent However, in this paper, we will restrict ourselves to geometric rough paths on manifolds. Let us start by recalling the notion of geometric rough paths on manifolds, before showing that there is indeed a rough path on $\YY$ $M$ such that its trace fulfills $\YY^i_t = Y^i_t \approx \cF(\XX_{s,t})\phi^i(Y_t)$, where $\phi^i$ is the $i$-th component of some coordinate function $\phi$.

\begin{rem}
    It should be noted that geometric rough path theory on manifolds is well understood. It is further clear, that the classical solution to an RDE on a manifold should have a $Y_t$ as a trace, which fulfills our Davie's formula $\phi(Y_t)\approx \cF(\XX_{s,t})\phi(Y_s)$ for any smooth function $\phi\in C^\infty(M)$ since our solution agrees with the classical solution in the geometric rough path case. The only real new proof in this section is a new proof of Corollary \ref{cor:RPonManifolds}, using mainly algebraic arguments.

    However, we still want to present this section as a possible starting point towards understanding general rough path solutions on manifolds as rough paths themselves, especially considering Remark \ref{rem:branched}.
\end{rem}

\subsection{Recall: Geometric rough paths on manifolds}

We use the view-point used in \cite{RPonManifoldsNewDef} (also used in \cite{emilio22} and\cite{ferrucci22} for non-geometric rough paths), in which rough paths on manifolds are a collection of rough paths in coordinate sheets $(\XX_i,\phi_i)$, which is consistent under the push-forward of the coordinate change functions: $(\phi_j\circ\phi_i^{-1})_*\XX_j = \XX_i$ up to some restriction on the times $s,t$. To define this properly, we need to introduce the notion of the push-forward, which requires the integration of a rough path against a one-form, which requires the notion of half shuffles. Let us start with introducing half shuffles:

Given two words $w,u$, the ordered shuffle $u\tilde\shuffle w$ is given as the sum over all words with the same letters as $uw$, such that the order inside $u$ and $w$ is preserved and the order of the last letters $u_{\abs u}$ and $w_{\abs w}$ is preserved. Formally, that is given by
\begin{equation*}
    u\tilde\shuffle w = (u\shuffle \bar w)w_{\abs w}\,,
\end{equation*}
where $w= (\bar w,w_{\abs w})$ for a word $\bar w$ and a letter $w_{\abs w}$. Conversely, we define the set of ordered deshuffles as follows: $\tilde\Delta^n(w)$ is the set of all splittings of $w$ into $n$ many non-empty words $(u_1,\dots,u_n)$, such that
\begin{itemize}
    \item $w\in Sh(u_1,\dots, u_n)$, where $Sh(u_1,\dots,u_n)$ is the set of all words with the same letters as $u_1\dots u_n$, such that the order of letters in all $u_i, i=1,\dots,n$ is preserved. It especially holds that $u_1\shuffle \dots\shuffle u_n = \sum_{w\in Sh(u_1,\dots, u_n)} w$, as long as no letter in $u_1,\dots, u_n$ appears more than once.
    \item The last letters of $u_1,\dots,u_n$ is ordered as it is in the word $w$.
\end{itemize}
For example, we have that
\begin{equation*}
    \tilde\Delta^2(123) = \{(1,23), (2,13), (12,3)\}\,.   
\end{equation*}
Using this notation, we can now introduce the integral against a one-form $\nu$: In the original paper \cite{originalArticleRP}, Terry Lyons shows that the integral of a one-form against a rough path can be seen as another rough path. To do so, he constructs so-called \emph{almost multiplicative functionals} out of $\nu$ and $\XX$. Almost multiplicative functionals uniquely give rise to rough paths, which are characterized by the fact that $\abs{\XX^w-\YY^w}\in o(\abs{t-s})$. We write $\XX_{s,t}^w\approx \YY_{s,t}^w$.

Using the ordered shuffle, we split Definition 3.2.2 from \cite{originalArticleRP} into each word to get the following definition:

\begin{defn}
    Let $\nu:\RR^n\to L(\RR^n,\RR^d)$ be a smooth one-form and let $\XX$ be a geometric rough path over $\RR^n$. The integral $\int\nu(d\XX)$ is given by the unique rough path $\YY$, such that for each word $w$, we have
    \begin{equation}\label{eq:pushForward}
        \YY^w_{s,t} \approx \sum_{\abs u\ge\abs w} \left(\sum_{(s_1,\dots,s_{\abs w})\in\tilde\Delta^{\abs w} (u)}\nu_{s_1}^{w_1}\dots\nu_{s_{\abs w}}^{w_{\abs w}} (X_s)\right)\XX_{s,t}^u\,.
    \end{equation}
    We denote $\XX^\nu := \YY$.
\end{defn}

\noindent Here $\nu_w$ is defined as follows: Note that $\nu$ is an $\RR^d$ valued one form on $\RR^n$ (i.e. $\nu:\RR^n\to L(\RR^n,\RR^d)$). Thus, it can be written as $\nu(x) = \sum_{i=1}^n \nu_i(x) d^i$, where $\nu_i :\RR^n\to\RR^d$ are smooth functions. We set $\nu_w := \partial_{w_1}\dots\partial_{w_{\abs w-1}} \nu_{w_{\abs w}}$.

Note that any smooth function $\phi:\RR^n\to\RR^d$ immediately generates a one-form $d\phi = \sum_{i=1}^n\partial_i\phi d^i$ by differentiating $\phi$. With this, one can set the push-forward of $\phi$ to be:

\begin{defn}
    The push-forward of $\XX$ under $\phi$ is the rough path given by $\XX^{d\phi}$.
\end{defn}

\noindent This leads to the following definition of a geometric rough path on a manifold:

\begin{defn}\cite{RPonManifoldsNewDef}
    A (geometric) rough path on a manifold over the interval $J$ is a finite collection $(x_i,\XX_i, J_i, (\phi_i, U_i))$, such that
    \begin{itemize}
        \item $U_i \subset \RR^d$ are open sets and $\phi_i: M\supset V_i\to U_i$ are diffeomorphisms.
        \item $(J_i)_i$ are intervalls and a compact cover of the interval $J$.
        \item $x_i$ is a path in $U_i$ with $x_i^j(t)-x_i^j(s) = \XX_{i;s,t}^j$ for each letter $j=1,\dots, n$. (That is, $x_i$ is a trace of $\XX_i$).
        \item $\XX_i$ is a geometric rough path over the interval $J_i$ over $\RR^n$.
        \item {\bf Consistency condition:} It holds that $\XX_i = (\phi_j\circ\phi_i^{-1})_* \XX_j$ on $J_i\cup J_j$, as long as this interval is not empty.
    \end{itemize}
\end{defn}

\subsection{Solutions to RDEs are rough paths}

In this section, we recall the construction of rough path solutions on manifolds and present a short, new proof that the solution to \eqref{eq:RDE} is indeed a rough path on $M$. We furthermore briefly discuss the connection to the pseudo bialgebra map $\cF$. The main idea towards solving an $RDE$ on $M$ is to express the vector fields $V_i$ in some coordinate chart $V_i^\phi = V_i \phi^k\partial_k$ for each $i=1,\dots, n$. One can then solve the RDE
\begin{equation*}
    d(\phi(Y_t)) = V^\phi_i(Y_t) d\XX^{i}_{t}
\end{equation*}
as a classical RDE over flat space for each coordinate function $\phi$. Doing so leads to the following terms for each word $w\in T(\RR^d), s\le t$ and coordinate function $\phi$: 
\begin{equation}\label{Y_phi}
    \YY_{s,t}^{\phi,w} \approx \sum_{\abs u\ge\abs w}\sum_{(s_1,\dots,s_{\abs w})\in\tilde\Delta^{\abs w}(u)}V_{s_1}\phi^{w_1}(Y_s)\dots V_{s_{\abs w}}\phi^{w_{\abs w}}(Y_s) \XX_{s,t}^u\,,
\end{equation}
where we sum over words $u$ of length less or equal to $N$ as well as all ordered splittings $(s_1,\dots,s_{\abs w})$ of $u$ into $\abs{w}$ many non-empty words. This gives a unique rough path in each coordinate chart, as long as we also choose a starting point $y_0\in M$ (becoming $\phi(y_0)$ in the coordinate chart). See \cite{originalArticleRP} for reference. Note that, as we expected, we have for all 1-letter words $i=1,\dots, d$:
\begin{align*}
    \phi(Y_t) &\approx \sum_{0\le \abs w\le N} V_{w_1}\circ\dots\circ V_{w_{\abs w}}\phi(Y_s) \XX^w_{s,t} \\
    &= \cF(\XX_{s,t})\phi(Y_s)\,,
\end{align*}
showing that the trace on $M$ is indeed the same path we get with our notion of solution. We also see that $\YY$ does indeed seem to fulfill some form of Davie's formula:
\begin{equation*}
    \YY^{\phi,w}_{s,t} \approx \tilde\cF(\XX_{s,t})\phi(Y_s)\,,
\end{equation*}
where for each word $u$, $\tilde\cF(u)$ maps functions $\phi\in C^\infty(M,\RR^d)$ onto functions in $C^\infty(M,T(\RR^d))$ via
\begin{equation*}
    \scalar{\tilde\cF(u)\phi,w} = \sum_{(s_1,\dots,s_{\abs w})\in\tilde\Delta^{\abs w}(u)}V_{s_1}\phi^{w_1}(Y_s)\dots V_{s_{\abs w}}\phi^{w_{\abs w}}(Y_s)\,.
\end{equation*}
While this indicates that for more general algebra, one should try to find maps $\tilde\cF:C^\infty(M,\RR^d)\to C^\infty(M,\cH)$ for some Hopf algebra $\cH$, such that the above Davie's formula has the correct trace and is ``coordinate independent''. However, as stated above, it is at the moment not clear how to push forward a group-like element for a general Hopf algebra from one coordinate chart to another, so ``coordinate independent'' is ambiguous here. For the geometric case, everything is well-defined and we will spend the rest of this section presenting a new proof, showing the coordinate independence: To do so, we show the following preliminary result:

\begin{lem}\label{lem:algebraicTechnical}
    For two given words $u, w$ with $\abs w\le \abs u$, consider the two sets
    \begin{align*}
        A &:= \{(t,s,z) ~\vert~ (t_1,\dots,t_{\abs{w}})\in\tilde\Delta^{\abs w}(u), 1\le\abs{s_i}\le\abs{t_i}, (z^i_1,\dots,z^i_{\abs{s_i}})\in\tilde\Delta^{\abs{s_i}}(t_i) \text{ for }i=1,\dots,\abs w\}\\
        B &:= \{(v,s,z)~\vert~ \abs u\ge\abs v\ge\abs w, (s_1,\dots,s_{\abs w})\in\tilde\Delta^{\abs w}(v), (z_1,\dots, z_{\abs v})\in\tilde\Delta^{\abs v}(u)\}\,.
    \end{align*}
    Then there are maps $v(t,s,z)$ and $t(v,s,z)$, such that $(t,s,z)\mapsto (v(t,s,z),s,z)$ as well as $(v,s,z)\mapsto (t(v,s,z),s,z)$ are inverse to each other and thus bijections.
\end{lem}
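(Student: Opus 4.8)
The plan is to interpret both sets $A$ and $B$ as two different ways of describing a \emph{two-level nested splitting} of the word $u$ which eventually refines $u$ into $\abs w$ blocks, indexed by $i = 1,\dots,\abs w$, each of which is further subdivided, and to show these two descriptions carry the same combinatorial data. Concretely, an element of $A$ first splits $u$ into $\abs w$ ordered pieces $(t_1,\dots,t_{\abs w})\in\tilde\Delta^{\abs w}(u)$, and then each $t_i$ is independently split into $\abs{s_i}$ ordered pieces $z^i_1,\dots,z^i_{\abs{s_i}}$. An element of $B$ first splits $u$ into $\abs v$ ordered pieces $(z_1,\dots,z_{\abs v})\in\tilde\Delta^{\abs v}(u)$ (a ``fine'' splitting), and then groups those $\abs v$ pieces into $\abs w$ consecutive-in-the-splitting-sense blocks via $(s_1,\dots,s_{\abs w})\in\tilde\Delta^{\abs w}(v)$, where $v$ is a word of length $\abs v$ whose letters are placeholders indexing the fine pieces. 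The key observation is that composing a coarse split with sub-splits is the same data as producing a fine split together with a prescription of how its parts are bundled: associativity of the (ordered) deshuffle operation. I would make this precise by defining the maps explicitly and checking they invert each other.

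First I would fix notation: given $(t,s,z)\in A$, the concatenation of the $z^i_j$ across all $i,j$, read in the order induced first by $i$ and then by $j$, gives a fine ordered splitting of $u$; call its length $\abs v := \sum_{i=1}^{\abs w}\abs{s_i}$ and let $v$ be the word on the alphabet $\{1,\dots,\abs v\}$ (or whatever canonical index set the paper uses) that records, for each fine piece, which block $i$ it came from, but written in the order the fine pieces appear \emph{inside $u$}; the $s_i$ are recovered as the restriction of $v$ to the letters carrying label $i$, and the $z$-data (now flattened and reordered to match $u$'s order) becomes $(z_1,\dots,z_{\abs v})\in\tilde\Delta^{\abs v}(u)$. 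This defines $v(t,s,z)$ and shows $(v(t,s,z),s,z)\in B$ — one must check the ordered-deshuffle conditions on last letters transfer correctly, which is where the ``last letter ordering preserved'' clauses in the definition of $\tilde\Delta$ do the work. Conversely, given $(v,s,z)\in B$, the $s$-data tells us how to partition $\{1,\dots,\abs v\}$ into $\abs w$ ordered sub-blocks; grouping the corresponding $z_k$'s and re-concatenating within each block recovers $(t_1,\dots,t_{\abs w})$ and the nested sub-splits $(z^i_j)$, giving $t(v,s,z)$ with $(t(v,s,z),s,z)\in A$. That these two assignments are mutually inverse is then a matter of tracking that flattening-then-regrouping and regrouping-then-flattening are inverse bijections on finite sequences, which is immediate once the bookkeeping is set up, since $s$ and $z$ are carried through unchanged by design.

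The main obstacle — and the step that needs genuine care rather than bookkeeping — is verifying that the \emph{last-letter ordering constraints} are preserved in both directions. The definition of $\tilde\Delta^n(w)$ requires that the last letters of the $n$ pieces appear in $w$ in the same order as the pieces are listed; when we compose a coarse $\tilde\Delta^{\abs w}(u)$-split with sub-splits and then ``forget'' the intermediate level to obtain a single $\tilde\Delta^{\abs v}(u)$-split, we must check that the induced ordering of the $\abs v$ fine pieces (as listed in $v$) is exactly their order of appearance in $u$, and simultaneously that the $\tilde\Delta^{\abs w}(v)$ condition on $s$ is consistent with the original $\tilde\Delta^{\abs w}(u)$ condition on $t$. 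This is an instance of the associativity/coassociativity compatibility of the half-shuffle, and I would either cite the relevant coassociativity property of the ordered deshuffle (it is the dual of the associativity of the half-shuffle product $\tilde\shuffle$, which the paper has already introduced) or prove it inline by a short induction on $\abs u$, peeling off the last letter $u_{\abs u}$ and observing that it must sit as the last letter of the last block on both sides. Once that compatibility is in hand, the bijection $A\leftrightarrow B$ follows formally, and since $s$ and $z$ are untouched the maps $v(t,s,z)$ and $t(v,s,z)$ have exactly the claimed inverse relationship.
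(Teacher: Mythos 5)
Your proposal takes essentially the same route as the paper: interleave the fine pieces $z^i_k$ by the order in which their last letters appear in $u$ (which is exactly the permutation $\sigma$ the paper constructs explicitly), read off $v$ from that interleaving, and check that the $\tilde\Delta$-constraints transfer because the last letter of $z^i_{\abs{s_i}}$ coincides with that of $t_i$, so the $\tilde\Delta^{\abs w}(u)$-condition on $t$ is precisely what makes $(s_1,\dots,s_{\abs w})\in\tilde\Delta^{\abs w}(v)$. Two small corrections to keep in mind when writing it up: $v$ is not a word of block labels over $\{1,\dots,\abs v\}$ but is built from the actual letters of the $s_i$ — concretely $v_j = s_{\sigma(j)}$ — otherwise the membership $(s_1,\dots,s_{\abs w})\in\tilde\Delta^{\abs w}(v)$ would not even parse; and the initial claim that listing the $z^i_k$ first by $i$ and then by $j$ already lies in $\tilde\Delta^{\abs v}(u)$ is false in general (one must first reorder by $u$, as you indeed do afterwards).
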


\begin{proof}
    We start by constructing $v(t,s,z)$: Let $(t,s,z)\in A$ and denote the index set of the $z^i_k$ by $I := \{(i,k_i)~\vert~ i=1,\dots,\abs{w}, k_i = 1,\dots,\abs{s_i}\}$. Let $\sigma: \{1,\dots, m\}\to I$ be the unique, bijective map such that $z_{\sigma(1)},\dots,z_{\sigma(m)}$ are ordered in such a way, that their last letters have the same order as in $u$. (Here we assume that the letters of u are colored in the sense that we can differentiate all letters in $u$, even if $u$ contains the same letter several times.)

    We then define $v := s_{\sigma(1)}\dots s_{\sigma(m)}$, where $s_{i,k_i}$ is the $k_i$-th letter of the word $s_i$. It follows that $(s_1,\dots, s_{\abs{w}})$ is a splitting of $v$. Furthermore, note that the last letters of $z^i_{\abs s_i}$ are just the last letters of $t_i$ for $i=1,\dots,\abs w$, which are ordered by the order of $u$. Thus, $\sigma$ does not change the ordering of $(z^i_{\abs s_i})_{i=1,\dots, \abs w}$, which means that the last letters of $s_1,\dots,s_{\abs w}$ are ordered as in $v$. It follows that $(s_1,\dots,s_{\abs w})\in\tilde\Delta^{\abs w}(v)$. Furthermore, one easily sees that $\abs u\ge\abs v\ge\abs w$, since $1\le\abs {s_i}\le\abs{t_i}$. It follows, that $\zeta(t,s,z) := (v(t,s,z),s,z)\in B$.

    We show that it is bijective by constructing $t(v,s,z)$ such that $\zeta^{-1}(v,s,z) = (t(v,s,z),s,z)$. Given $(v,s,z)$, let $\sigma: I\to \{1,\dots,\abs{v}\}$ be the unique bijection, such that $s(i,k_i) = v_{\sigma(i,k_i)}$ for all $(i,k_i)\in I$. We then set $z^i_{k_i} := z_{\sigma(i,k_i)}$. For each $i$, we then set $t_i$ to be the word containing the same letters as $z^i_1,\dots, z^i_{\abs{s_i}}$, where we reorder the letters such that they have the same order as in $u$. It is straight-forward to see that $v(t(v,s,z),s,z) = v$, so $\zeta$ is bijective given that $(t(v,s,z),s,z)\in A$. To show this, note that $(t_1,\dots, t_{\abs w})$ is a splitting of $u$ since $(z_1,\dots,z_{\abs v})$ is one. Further, since $(s_1,\dots, s_{\abs w})$ respected the order of last letters in $v$, $\sigma$ does not change the order of $z^i_{\abs {s_i}}$, which thus still have last letters respecting the order of $u$. It follows that $(t_1,\dots, t_{\abs w})$ respects this order and is thus in $\tilde\Delta(u)$. Furthermore, $1\le \abs{s_i}\le \abs{t_i}$ and since $(z_1,\dots,z_{\abs v})\in\tilde\Delta^{\abs v}(u)$ respect the ordering of $u$, we get that they respect the ordering of the subwords $t_i$ ($(z^i_1,\dots z^i_{\abs {s_i}})$ is obviously a spitting of $t_i$.) Thus, $(t(v,s,z),s,z)\in A$, which finishes the proof.
\end{proof}

\noindent A simple corollary of this is, that a rough path in a single coordinate sheet gives rise to a rough path on $M$: Given an atlas $(\phi_i, U_i)$ and a rough path $\XX$ in $U_1$, we set $\XX^{\phi_i} := (\phi_i\circ \phi_1^{-1})_*\XX$ on the respective interval. The following holds:

\begin{cor}\label{cor:RPonManifolds}
    $(\XX^{\phi_i},\phi_i,U_i)$ is a rough path on $M$.
\end{cor}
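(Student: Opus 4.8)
The plan is to verify the two non-trivial items in the definition of a geometric rough path on a manifold for the collection $(\XX^{\phi_i},\phi_i,U_i)$, the trace condition and the consistency condition, since the conditions on $U_i$, the diffeomorphisms $\phi_i$ and the interval cover are inherited directly from the fixed atlas. The key observation is that the push-forward operation $\XX\mapsto\XX^{d\psi}$ is \emph{functorial} in $\psi$: if $\psi_1:\RR^d\to\RR^d$ and $\psi_2:\RR^d\to\RR^d$ are smooth, then $(\XX^{d\psi_1})^{d\psi_2}=\XX^{d(\psi_2\circ\psi_1)}$, at least up to the usual restriction on the times $s,t$ and up to the $\approx$-equivalence which uniquely determines the resulting rough path. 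Once functoriality is established, the consistency condition is immediate: writing $\psi_{ij}:=\phi_j\circ\phi_i^{-1}$ and $\psi_{1i}:=\phi_i\circ\phi_1^{-1}$, we have $\psi_{1j}=\psi_{ij}\circ\psi_{1i}$, hence $(\XX^{\phi_i})^{d\psi_{ij}}=(\XX^{d\psi_{1i}})^{d\psi_{ij}}=\XX^{d\psi_{1j}}=\XX^{\phi_j}$, which is exactly $(\phi_j\circ\phi_i^{-1})_*\XX^{\phi_i}=\XX^{\phi_j}$. The trace condition is even more elementary: by definition of the push-forward, the level-one components of $\XX^{d\psi}$ are $\YY^k_{s,t}\approx\sum_{\abs u\ge 1}(\partial\psi^k$ evaluated along the appropriate ordered deshuffle$)\XX^u_{s,t}$, and unwinding \eqref{eq:pushForward} at $\abs w=1$ shows this is the increment of the path $t\mapsto\psi(x(t))$ where $x$ is the trace of $\XX$; applying this with $\psi=\psi_{1i}$ gives a trace $x_i$ of $\XX^{\phi_i}$ in $U_i$.

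The main work is therefore proving the functoriality/cocycle identity for push-forwards, and this is precisely where Lemma \ref{lem:algebraicTechnical} enters. Concretely, I would start from the defining formula \eqref{eq:pushForward} for $\YY=\XX^{d\psi_1}$, then substitute it into the defining formula for $\ZZ=\YY^{d\psi_2}$, obtaining a double sum indexed (after expanding the $\approx$-relations, which is legitimate since almost-multiplicative functionals have unique associated rough paths) over configurations of the form $(t,s,z)$ appearing in the set $A$ of Lemma \ref{lem:algebraicTechnical}: an outer ordered deshuffle $\tilde\Delta^{\abs w}(u)$ coming from the $\psi_2$-push-forward, together with inner ordered deshuffles $\tilde\Delta^{\abs{s_i}}(t_i)$ coming from the $\psi_1$-push-forward. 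The target formula for $\XX^{d(\psi_2\circ\psi_1)}$, after applying the chain rule $\partial_{j}(\psi_2\circ\psi_1)^k=(\partial_\alpha\psi_2^k\circ\psi_1)\,\partial_j\psi_1^\alpha$ and iterating, is a single sum indexed over configurations $(v,s,z)$ in the set $B$. The bijection $A\leftrightarrow B$ from Lemma \ref{lem:algebraicTechnical} matches the two index sets, and one checks that the coefficient attached to each configuration (a product of iterated derivatives of $\psi_1$ and $\psi_2$ evaluated at the relevant points, weighted by the iterated chain rule) is preserved under this bijection. This gives $\ZZ^w_{s,t}\approx(\XX^{d(\psi_2\circ\psi_1)})^w_{s,t}$ for every word $w$, and uniqueness of the rough path associated to an almost-multiplicative functional upgrades $\approx$ to equality.

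The hard part will be the bookkeeping in this last step: carefully tracking which vector-field/derivative factor is evaluated at which point (everything is ultimately evaluated at the base point $X_s$ or $\psi_1(X_s)$, which must be identified across the two sides) and confirming that the combinatorial weights introduced by the ordered-deshuffle structure are exactly the Leibniz/Faà-di-Bruno coefficients produced by differentiating a composition. I expect no essential difficulty beyond this — the geometric case is classical, and indeed the statement that push-forwards compose and that RDE solutions are rough paths on manifolds is well known (see \cite{RPonManifoldsNewDef}) — so the contribution here is really just the clean, purely algebraic packaging of the composition law via Lemma \ref{lem:algebraicTechnical}. I would close by remarking that the same argument shows the collection $(x_i,\XX^{\phi_i},J_i,(\phi_i,U_i))$ is independent, up to the natural notion of equivalence, of the choice of the distinguished chart $\phi_1$, so that the construction is genuinely intrinsic.
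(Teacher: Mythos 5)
Your proposal matches the paper's proof exactly: the paper reduces the corollary to the consistency condition, which it reduces to the factorization of the push-forward (Proposition \ref{prop:appendix}), and proves that by combining Lemma \ref{lem:chainRuleWords} (the iterated chain rule) with the bijection $\zeta:A\to B$ from Lemma \ref{lem:algebraicTechnical}. A small transcription note: you have $A$ and $B$ swapped in your description — in the paper the iterated push-forward $\psi_{2*}(\psi_{1*}\XX)$ is indexed by configurations in $B$ and the direct push-forward $(\psi_2\circ\psi_1)_*\XX$ expanded via the chain rule by configurations in $A$ — but since the argument only uses the fact that $\zeta$ is a bijection, this is cosmetic and does not affect the proof.
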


\begin{proof}
    The only thing one needs to check is the consistency property. To do so, it suffices to check that the push-forward factorizes. Since this result is not the main focus of this section, but simply a nice observation, we move the proof of the factorization of the push-forward in the appendix and show it in Proposition \ref{prop:appendix}.
\end{proof}

\noindent To show that $\YY$ is indeed a rough path on a manifold, we need one more technical result: We will need to calculate the general derivative of compositions $\psi\circ\phi$ of smooth functions $\phi,\psi$.

\begin{lem}\label{lem:chainRuleWords}
    Let $w$ be any word and $\phi,\psi$ be smooth functions. It holds that
    \begin{equation*}
        \partial_w (\psi\circ\phi)(x) = \sum_{1\le\abs{v}\le \abs w} (\partial_v \psi)(\phi(x))\left(\sum_{(s_1,\dots, s_n)\in \tilde\Delta^{\abs v}(w)}\partial_{s_1}\phi^{v_1}(x)\dots\partial_{s_{\abs v}} \phi^{v_{\abs v}}(x)\right)\,,
    \end{equation*}
    where $\partial_w = \partial_{w_1}\dots\partial_{w_{\abs w}}$.
\end{lem}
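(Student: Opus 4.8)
The statement to prove is a generalized chain rule (Faà di Bruno type formula) expressing $\partial_w(\psi\circ\phi)$ as a sum over ordered deshuffles of $w$. This is a purely combinatorial-analytic identity, so the natural approach is induction on the length $\abs{w}$ of the word.

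The plan is to proceed by induction on $\abs{w}$. For the base case $\abs{w}=1$, say $w=(j)$, the ordinary chain rule gives $\partial_j(\psi\circ\phi)(x) = (\partial_k\psi)(\phi(x))\,\partial_j\phi^k(x)$, which matches the right-hand side since $\tilde\Delta^1(w) = \{w\}$ and the only words $v$ with $1\le\abs v\le 1$ are the single letters. For the inductive step, write $w = (\bar w, w_{\abs w})$ where $\bar w$ is the word obtained by deleting the last letter, and apply $\partial_{w_{\abs w}}$ to the formula already known for $\partial_{\bar w}(\psi\circ\phi)$. First I would differentiate each summand $(\partial_v\psi)(\phi(x))\prod_{i=1}^{\abs v}\partial_{s_i}\phi^{v_i}(x)$ by the product rule: the derivative $\partial_{w_{\abs w}}$ either hits the outer factor $(\partial_v\psi)(\phi(x))$ — producing $(\partial_{(v, m)}\psi)(\phi(x))\,\partial_{w_{\abs w}}\phi^m(x)$ by the chain rule again — or it hits one of the inner factors $\partial_{s_i}\phi^{v_i}(x)$, turning it into $\partial_{(s_i, w_{\abs w})}\phi^{v_i}(x)$.

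The key combinatorial step is then to recognize that summing these two types of contributions over all $v$ and all $(s_1,\dots,s_{\abs v})\in\tilde\Delta^{\abs v}(\bar w)$ exactly reproduces the sum over all $v'$ and all $(s_1',\dots,s'_{\abs{v'}})\in\tilde\Delta^{\abs{v'}}(w)$. The point is that an ordered deshuffle of $w = (\bar w, w_{\abs w})$ is obtained from an ordered deshuffle of $\bar w$ by placing the last letter $w_{\abs w}$ of $w$: since $w_{\abs w}$ is the last letter of $w$, in any ordered deshuffle it must be the last letter of its block, and that block must be the last block in the ordered-deshuffle ordering. So either $w_{\abs w}$ forms a new singleton block appended at the end (this is the "derivative hits $\psi$" term, which raises $\abs v$ by one and appends the new index to $v$), or $w_{\abs w}$ is appended to the end of the last block $s_i$ of some existing deshuffle of $\bar w$ (this is the "derivative hits $\partial_{s_i}\phi^{v_i}$" term). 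I would phrase this bijection cleanly using the recursive formula $u\tilde\shuffle w = (u\shuffle\bar w) w_{\abs w}$ for the ordered shuffle given earlier in the text, or equivalently by directly checking the two defining conditions of $\tilde\Delta^{\abs{v'}}(w)$ (that $w$ is a shuffle of the blocks, and that the last letters of the blocks are ordered as in $w$).

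The main obstacle I anticipate is purely bookkeeping: making the bijection between deshuffles of $\bar w$ (with a distinguished place to insert $w_{\abs w}$) and deshuffles of $w$ precise, and checking that under this bijection the summands match term by term, including the index on $\psi$'s derivative and the superscripts on the $\phi$ factors. There is a subtlety that when $w_{\abs w}$ is inserted into an existing block $s_i$, that block $s_i$ need not be the last block in the word order of $\bar w$ — but it must be the block whose last letter becomes $w_{\abs w}$, hence becomes last in the deshuffle ordering for $w$; one must verify this is consistent with the ordered-deshuffle condition. Once this correspondence is set up carefully, the proof is a routine verification, so I would keep the write-up focused on stating the induction, applying the product and chain rules to a generic summand, and then spelling out the two-case bijection on ordered deshuffles.
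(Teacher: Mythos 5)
Your proposal is correct and follows essentially the same inductive argument as the paper's proof: Leibniz plus chain rule applied to a generic summand, followed by a bijection on ordered deshuffles. The one difference is that you append the new letter at the end of $w$, while the paper prepends it at the front ($w\mapsto iw$); since the ordered-deshuffle ordering is determined by the \emph{last} letters of the blocks, prepending never moves any block and never permutes the index word on $\partial_v\psi$, so the paper's version avoids entirely the block-reordering subtlety you flag (which in your version also quietly uses commutativity of the mixed partials of $\psi$ to re-sort $v$).
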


\begin{proof}
    For one-letter words $w=1,\dots, n$, this is just the chain-formula. For longer words, we do inductively get for $i=1,\dots n$ and any word $w$:
    \begin{align*}
        \partial_{iw}(\psi\circ\phi)(x) &= \sum_{1\le\abs{v}\le \abs w}\sum_{j=1}^n (\partial_{jv} \psi)(\phi(x))\left(\sum_{(s_1,\dots, s_n)\in \tilde\Delta^{\abs v}(w)}\partial_i \phi^j(x)\partial_{s_1}\phi^{v_1}(x)\dots\partial_{s_{\abs v}} \phi^{v_{\abs v}}(x)\right)\\
        &\quad + \sum_{1\le \abs{v}\le \abs w} (\partial_v \psi)(\phi(x))\left(\sum_{(s_1,\dots, s_n)\in \tilde\Delta^{\abs v}(w)}\sum_{j=1}^{\abs v}\partial_{s_1}\phi^{v_1}(x)\dots\partial_{is_j}\phi^{v_j}(x)\dots\partial_{s_{\abs v}} \phi^{v_{\abs v}}(x)\right)\\
        &= \sum_{1\le\abs{v}\le \abs {iw}} (\partial_v \psi)(\phi(x))\left(\sum_{(s_1,\dots, s_n)\in \tilde\Delta^{\abs v}(iw)}\partial_{s_1}\phi^{v_1}(x)\dots\partial_{s_{\abs v}} \phi^{v_{\abs v}}(x)\right)\,.
    \end{align*}
\end{proof}

\noindent We can now show our main result: That $\YY$ is indeed a rough path on $M$. The only non-trivial part of that is the consistency condition: Let $\phi,\psi$ be two coordinate charts such that their supports on $M$ overlap and let $\nu:\psi\circ\phi^{-1}$ be the coordinate change map. We claim that $\YY_{s,t}^\psi = \nu_*\YY^\phi_{s,t}$ holds, which shows the following theorem:

\begin{thm}
    Given an atlas $\phi\in\Phi$, $(\YY^\phi)_{\phi\in\Phi}$ from \eqref{Y_phi} is a geometric rough path on $M$.
\end{thm}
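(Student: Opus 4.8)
The plan is to verify the consistency condition directly, reducing it to the known factorization of the push-forward for geometric rough paths together with the combinatorial identity from Lemma \ref{lem:algebraicTechnical} and the chain rule from Lemma \ref{lem:chainRuleWords}. All other conditions in the definition of a geometric rough path on a manifold are immediate: the $U_i$ are open, the $\phi_i$ are diffeomorphisms by hypothesis, the $J_i$ form a compact cover by construction, each $\YY^\phi$ is a genuine geometric rough path over $\RR^d$ (being the solution of a classical RDE in flat space with smooth coefficients, cf.\ \cite{originalArticleRP}), and the trace property $x_i^j(t)-x_i^j(s)=\YY^{\phi,j}_{i;s,t}$ holds because the one-letter component of \eqref{Y_phi} reads $\YY^{\phi,j}_{s,t}\approx V_k\phi^j(Y_s)\XX^k_{s,t}+\dots$, which is exactly $\phi^j(Y_t)-\phi^j(Y_s)$ by the Davie expansion. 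So the entire content is the consistency condition $\YY^\psi_{s,t}=\nu_*\YY^\phi_{s,t}$ with $\nu=\psi\circ\phi^{-1}$.

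First I would unwind both sides on a fixed word $w$. On the left, $\YY^{\psi,w}_{s,t}$ is given by \eqref{Y_phi} with $\phi$ replaced by $\psi$, i.e.\ a sum over words $u$, $|u|\ge|w|$, of $\sum_{(s_1,\dots,s_{|w|})\in\tilde\Delta^{|w|}(u)}\prod_{i} V_{s_i}\psi^{w_i}(Y_s)\cdot\XX^u_{s,t}$. Now $V_{s_i}\psi^{w_i}=V_{s_i}(\nu^{w_i}\circ\phi)$, and by Lemma \ref{lem:chainRuleWords} (applied with the derivatives along the vector-field operators $V$ rather than the coordinate derivatives $\partial$, which is legitimate since each $V_j=V_j^\alpha\partial_\alpha$ and the lemma is multilinear/Leibniz in the derivations) this expands $V_{s_i}\psi^{w_i}$ into a sum over sub-words $t_i$ with $1\le|t_i|\le|s_i|$ of $(\partial_{t_i}\nu^{w_i})(\phi(Y_s))\cdot\big(\text{products of }V\phi^{(\cdot)}\big)$, where the inner sum runs over $\tilde\Delta^{|t_i|}(s_i)$. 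Substituting this into the left-hand side produces exactly a sum indexed by the set $A$ of Lemma \ref{lem:algebraicTechnical} (tensored with the choice of $u$ and the deshuffle $(s_1,\dots,s_{|w|})\in\tilde\Delta^{|w|}(u)$).

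On the right, $\nu_*\YY^\phi$ is by definition $(\YY^\phi)^{d\nu}$, so its $w$-component is \eqref{eq:pushForward} with $\XX$ replaced by $\YY^\phi$: a sum over words $v$, $|v|\ge|w|$, of $\big(\sum_{(s_1,\dots,s_{|w|})\in\tilde\Delta^{|w|}(v)}(\partial_{s_1}\nu^{w_1}\cdots\partial_{s_{|w|}}\nu^{w_{|w|}})(\phi(Y_s))\big)\YY^{\phi,v}_{s,t}$, and then $\YY^{\phi,v}_{s,t}$ is itself expanded by \eqref{Y_phi} as a sum over $u$, $|u|\ge|v|$, with a $\tilde\Delta^{|v|}(u)$-sum over products of $V\phi$'s times $\XX^u_{s,t}$. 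This produces a sum indexed by the set $B$ of Lemma \ref{lem:algebraicTechnical}. The bijection $\zeta:A\to B$ of that lemma matches the two index sets while preserving the data $(s,z)$ on which the scalar coefficients depend; since $\partial_{s_i}\nu^{w_i}$ depends only on $s$, and the $V\phi$-products depend only on the $z$'s, the two sums have termwise equal coefficients and the same $\XX^u_{s,t}$, hence the sums over $A$ and $B$ agree. Because a geometric rough path is uniquely determined by its $\approx$-class (almost-multiplicative functionals give rise to unique rough paths, \cite{originalArticleRP}), $\YY^{\psi,w}_{s,t}\approx\nu_*\YY^{\phi,w}_{s,t}$ for every $w$ forces $\YY^\psi=\nu_*\YY^\phi$ on the overlap interval, which is the consistency condition.

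I expect the main obstacle to be purely bookkeeping: carefully keeping track of which deshuffles act on which words and checking that the coefficient attached to each element of $A$ really does match the coefficient attached to its image under $\zeta$ in $B$ — in particular that the "colouring" of letters in $u$ assumed in the proof of Lemma \ref{lem:algebraicTechnical} is harmless, i.e.\ summing over all (coloured) deshuffles and then forgetting the colours gives the same result on both sides. A secondary technical point is justifying the use of Lemma \ref{lem:chainRuleWords} with the derivations $V_j$ in place of $\partial_j$; this is just the observation that the lemma only uses the Leibniz rule and linearity of the operators $\partial_j$, both of which hold for any commuting family — and here the $V_j$ need not commute, so one must instead apply the lemma in the coordinate $\phi$-chart where everything is written in terms of $\partial_\alpha$, expand $V_{s_i}\psi^{w_i}$ via the ordinary chain rule, and only afterwards re-collect the $\partial$-derivatives of the $\nu$'s and $\phi$'s; this is the step where one should be most careful. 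Once the index bijection is in place, no analysis is needed — the identity is a formal consequence of Lemmas \ref{lem:algebraicTechnical} and \ref{lem:chainRuleWords} and the definition of the push-forward.
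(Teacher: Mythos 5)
Your proposal is correct and takes essentially the same route as the paper: expand $\YY^\psi$ via \eqref{Y_phi} together with the chain-rule Lemma~\ref{lem:chainRuleWords}, expand $\nu_*\YY^\phi$ via \eqref{eq:pushForward} and \eqref{Y_phi}, and invoke the bijection $\zeta:A\to B$ of Lemma~\ref{lem:algebraicTechnical}, which preserves exactly the data $(s,z)$ on which the scalar coefficients depend. The commutativity concern you raise about applying Lemma~\ref{lem:chainRuleWords} with the derivations $V_j$ in place of $\partial_j$ is actually unfounded --- its inductive proof invokes only the chain rule and the Leibniz rule, neither of which requires the outer derivations to commute, so the mixed $V/\partial$ version the paper uses in \eqref{Yphi} holds verbatim and the chart-based re-derivation you sketch, while valid, is a superfluous fallback.
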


\begin{proof}
    Let us first calculate the push-forward $\nu_*\YY^\phi:$
    \begin{align}
    \begin{split}\label{nu*Y}
        \nu_*\YY^{\phi,w} &\approx \sum_{\abs u\ge \abs w}\sum_{(s_1,\dots,s_{\abs w})\in\tilde\Delta^{\abs w}(u)}\partial_{s_1}\nu^{w_1} \dots \partial_{s_{\abs w}}\nu^{w_{\abs w}} \YY^{\phi,u} \\
        &\approx\sum_{\abs v\ge\abs u\ge\abs w} \left(\sum_{(s_1,\dots,s_{\abs w})\in\tilde\Delta^{\abs w}(u)}\partial_{s_1}\nu^{w_1} \dots \partial_{s_{\abs w}}\nu^{w_{\abs w}}\right)\\
        &\qquad \left(\sum_{(z_1,\dots z_{\abs u})\in\tilde\Delta^{\abs u}(v)} V_{z_1}\phi^{u_1}\dots V_{z_{\abs u}} V\phi^{u_{\abs u}}\right)\XX^v 
    \end{split}
    \end{align}
    On the flip side, we can express $\psi$ as $\psi = \nu\circ\phi$. Together with Lemma \ref{lem:chainRuleWords}, this gives us
    \begin{align}
    \begin{split}\label{Yphi}
        \YY^{\psi,w} &\approx \sum_{\abs v\ge\abs w} \sum_{(t_1,\dots,t_{\abs w})\in\tilde\Delta^{\abs w}(v)}V_{t_1}(\nu\circ\phi)^{w_1}\dots V_{t_{\abs w}}(\nu\circ\phi)^{w_{\abs w}} \XX^v\\
        &\approx\sum_{\abs v\ge\abs w} \sum_{(t_1,\dots,t_{\abs w})\in\tilde\Delta^{\abs w}(v)}\left(\sum_{\abs{s_1}\le \abs{t_1}\dots \abs{s_{\abs w}}\le\abs{t_{\abs w}}} \partial_{s_1}\nu^{w_1}\dots\partial_{s_{\abs w}}\nu^{w_{\abs w}}\right)\\
        &\qquad\left(\sum_{(z^1_1,\dots,z^1_{\abs {s_1}})\in\tilde\Delta^{\abs {s_1}}(t_1),\dots,(z^{\abs w}_1,\dots,z^{\abs w}_{\abs{s_{\abs w}}})\in\tilde\Delta^{\abs{s_{\abs w}}}(t_{\abs w})} V_{z^1_1}\phi^{s_{1,1}}\dots V_{z^{\abs w}_{\abs {s_{\abs w}}}} \phi^{s_{\abs w,\abs{s_{\abs w}}}}\right)\XX^v\,.
    \end{split}
    \end{align}
    For fixed $w,v$, \eqref{nu*Y} can be written as
    \[
        \YY^{\psi,w} = \sum_{\abs{v}\ge \abs w} \left(\sum_{(u,s,z)\in B} \partial_{s_1}\nu^{w_1} \dots \partial_{s_{\abs w}}\nu^{w_{\abs w}} V_{z_1}\phi^{u_1}\dots V_{z_{\abs u}} V\phi^{u_{\abs u}}\right)\XX^v
    \]
    and \eqref{Yphi} can be written as
    \[
        \YY^{\psi,w} = \sum_{\abs{v}\ge \abs w} \left(\sum_{(t,s,z)\in A} \partial_{s_1}\nu^{w_1} \dots \partial_{s_{\abs w}}\nu^{w_{\abs w}} V_{z^1_1}\phi^{s_{1,1}}\dots V_{z^{\abs w}_{\abs {s_{\abs w}}}} \phi^{s_{\abs w,\abs{s_{\abs w}}}}\right)\XX^v
    \]
    Thus, Lemma \ref{lem:algebraicTechnical} gives us that \eqref{Yphi} and \eqref{nu*Y} are the same, showing the claim.
\end{proof}

\subsection{Discussion: General rough paths are geometric rough paths}

Something an attentive reader might have noticed is that any rough path can be lifted to a geometric rough path over a larger index set: The Theorem of Milnor-Moore gives us, that any rough path lives in a universal enveloping algebra, which is given by $U(P) = T(P)/I$ by Proposition \ref{prop:UisShuffle}. By the extension theorem \cite{extensionTheorem}, we can thus find a rough path lift $\tilde\XX$ into $T(P)$, making it a non-homogeneous geometric rough path over $P$. 

\begin{rem}
    It should be noted that the extension theorem was only proven in \cite{extensionTheorem} for the homogeneous case. However, the proof that any inhomogeneous rough path in $U(P)$ can be extended to a rough path in $T(P)$ works exactly the same.
\end{rem}

\noindent Thus, using the last section, we can see the solution to any rough differential equation as a geometric rough path $\YY$ simply by replacing
\begin{equation*}
    dY_s = V_i(Y_s)d\XX_s^i
\end{equation*}
with
\begin{equation*}
    dY_s = \sum_{p\in \tilde P} \cF(p)(Y_s)d\tilde\XX_s^p\,,
\end{equation*}
where $\tilde P$ is a basis of $P^N$. However, the problem with this construction is that in general, $\tilde\XX$ is not unique, and $\YY$ fundamentally relies on the lift. This can easily be seen by using a geometric rough path: In this case, $\XX$ should live in $T(\RR^n)$, a much smaller space than $T(P)$.

\begin{rem}
    We should note that in the special case of branched rough paths, this approach should be quite successful: For non-planarly branched rough paths \cite{IsomorphismUnordered}, it is shown that $\cH_{GL}$ is isomorphic to $T(B)$, where $B$ is a subspace of $\myspan(\cUT) = \cP(\cH_{GL})$. This shows that any non-planarly branched rough path can be seen as a geometric rough path over $B$. And \cite{rahm2023} shows in Section 6 that any planarly branched rough path is given by a geometric rough path over $\myspan(\cOT)$. Thus, one can use the above results to make sense of solutions to branched RDEs as (geometric) rough paths on manifolds.
\end{rem}

\noindent However, for general $\cH$ it does not seem likely that this approach should lead to a unique rough path $\YY$ over $Y$. Indeed, if $\XX$ is not unique, one easily sees that $\YY$ can not be unique unless the vector fields $V_i, i=1,\dots, n$ interact with $\XX$ in such a way that all the additional information in $\tilde\XX$ vanishes. Indeed, if we have that $\tilde\XX_{s,t}^u$ is not unique for some word with grade $\abs{u} = N$ equal to the level of the rough path, \eqref{Y_phi} becomes for any word $w$ with grade $\abs{w} = N$:
\begin{equation*}
    \YY^{\phi,w}_{s,t} = \sum_{\abs{u} = \abs{w}} V_{u_1}\phi^{w_1}\dots V_{u_{\abs{u}}} \phi^{w_{\abs w}} (Y_s) \tilde \XX^u_{s,t}\,.
\end{equation*}
By choosing appropriate vector fields $V_i$, we can always ensure that $\YY^w$ depends on the non-unique $\tilde\XX^u$.

\appendix

\section{The push-forward factorizes}

\noindent The goal of this section is to show that given a geometric rough path $\XX$ over $\RR^d$ and two smooth functions $\phi:\RR^d\to\RR^n$, $\psi:\RR^n\to\RR^m$, we want to show that
\begin{equation*}
    \psi_*(\phi_*\XX) = (\psi\circ\phi)_*\XX\,.
\end{equation*}
Before we start the proof, let us expand both sides of the equation: Let $\YY = \psi_*(\phi_*) \XX$. By applying \eqref{eq:pushForward} twice, we get that
\begin{align}
\begin{split}\label{eq:Y}
    \YY^w_{s,t} &\approx \sum_{\abs v\ge \abs w} \left(\sum_{(s_1,\dots,s_{\abs w})\in\tilde\Delta^{\abs w}(v)} \partial_{s_1}\psi^{w_1}\dots\partial_{s_{\abs w}}\psi^{w_{\abs{w}}}(\phi(X_s))\right) \phi^*\XX^v_{s,t} \\
    &\approx \sum_{\abs u \ge \abs v\ge \abs w} \left(\sum_{(s_1,\dots,s_{\abs w})\in\tilde\Delta^{\abs w}(v)} \partial_{s_1}\psi^{w_1}\dots\partial_{s_{\abs w}}\psi^{w_{\abs{w}}}(\phi(X_s))\right)\\ &\qquad \left(\sum_{(z_1,\dots,z_{\abs v})\in\tilde\Delta^{\abs v}(u)} \partial_{z_1}\phi^{v_1}\dots\partial_{z_{\abs v}}\phi^{v_{\abs{v}}}(X_s)\right) \XX^u
\end{split}
\end{align}
holds for all words $w$ and $s\le t$. On the other side, let $\ZZ := (\psi\circ\phi)_*\XX$. One can then calculate that
\begin{align}
\begin{split}\label{eq:Z}
    \ZZ^w &\approx \sum_{\abs u\ge\abs w}\left(\sum_{(s_1,\dots,s_{\abs w})\in\tilde\Delta^{\abs w}(u)} \partial_{s_1}(\psi\circ\phi)^{w_1}\dots\partial_{s_{\abs w}}(\psi\circ\phi)^{w_{\abs{w}}}(X_s)\right)\XX^u\\
    &= \sum_{\abs u\ge \abs w} \bigg(\sum_{(t_1,\dots,t_{\abs w})\in\tilde\Delta^{\abs w}(u)}\sum_{\abs{s_1}\le\abs{t_1},\dots,\abs{s_{\abs w}}\le\abs{t_{\abs w}}} \partial_{s_1}\psi^{w_1}\dots\partial_{s_{\abs w}}\psi^{w_{\abs w}}(\phi(X_s)) \\
    &\qquad \sum_{(z^1_1,\dots,z^1_{\abs{s_1}})\in\tilde\Delta^{\abs {s_1}}(t_1),\dots,(z^{\abs w}_1,\dots z^{\abs w}_{\abs{s_{\abs w}}})\in\tilde\Delta^{\abs{s_{\abs w}}}(t_{\abs{w}})}\partial_{z^1_1} \phi^{s_{1,1}}\dots\partial_{z^{\abs w}_{\abs{s_{\abs w}}}}\phi^{s_{\abs w, \abs{s_{\abs w}}}}(X_s)\bigg)\XX^u\,.
\end{split}
\end{align}
where we used Lemma \ref{lem:chainRuleWords}. We claim that both are the same:

\begin{prop}\label{prop:appendix}
    The push-forward factorizes. That is, for any geometric rough path and $\psi,\phi$ as above, we have that $\psi_*(\phi_*\XX) = (\psi\circ\phi)_*\XX$.
\end{prop}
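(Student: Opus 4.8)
The plan is to compare the two expansions \eqref{eq:Y} and \eqref{eq:Z} term by term, exactly as in the proof of the consistency condition in the previous section, and reduce the claim to the combinatorial bijection already established in Lemma \ref{lem:algebraicTechnical}. Both $\YY^w_{s,t}$ and $\ZZ^w_{s,t}$ are, up to an error in $o(\abs{t-s})$, sums over words $u$ with $\abs u\ge\abs w$ of the shape $\big(\sum_{\text{index set}}\partial_{s_\bullet}\psi^{w_\bullet}(\phi(X_s))\,\partial_{z_\bullet}\phi^{\bullet}(X_s)\big)\XX^u_{s,t}$; the only difference between the two is the index set being summed over. For $\YY$ this index set, for fixed $u$ and $w$, is precisely the set
\[
    B=\{(v,s,z)~\vert~\abs u\ge\abs v\ge\abs w,\ (s_1,\dots,s_{\abs w})\in\tilde\Delta^{\abs w}(v),\ (z_1,\dots,z_{\abs v})\in\tilde\Delta^{\abs v}(u)\},
\]
while for $\ZZ$, after expanding $\partial_{s_i}(\psi\circ\phi)^{w_i}$ via Lemma \ref{lem:chainRuleWords}, it is the set
\[
    A=\{(t,s,z)~\vert~(t_1,\dots,t_{\abs w})\in\tilde\Delta^{\abs w}(u),\ 1\le\abs{s_i}\le\abs{t_i},\ (z^i_1,\dots,z^i_{\abs{s_i}})\in\tilde\Delta^{\abs{s_i}}(t_i)\ \text{for }i=1,\dots,\abs w\}.
\]

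Concretely, I would first fix an arbitrary word $w$ and a pair $s\le t$, and rewrite \eqref{eq:Y}, collecting the coefficient of each $\XX^u_{s,t}$ (for $\abs u\ge\abs w$), so that the coefficient becomes a sum over $(v,s,z)\in B$ of the monomial $\partial_{s_1}\psi^{w_1}\cdots\partial_{s_{\abs w}}\psi^{w_{\abs w}}(\phi(X_s))\cdot\partial_{z_1}\phi^{v_1}\cdots\partial_{z_{\abs v}}\phi^{v_{\abs v}}(X_s)$. Here $v$ records the intermediate word produced by the first push-forward, the tuple $s=(s_1,\dots,s_{\abs w})$ is the ordered splitting of $v$ recording which derivatives of $\psi$ appear, and $z=(z_1,\dots,z_{\abs v})$ is the ordered splitting of $u$ recording the derivatives of $\phi$. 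Then I would do the same for \eqref{eq:Z}: substituting the chain-rule formula of Lemma \ref{lem:chainRuleWords} into $\partial_{s_i}(\psi\circ\phi)^{w_i}$ produces exactly the double sum displayed in \eqref{eq:Z}, whose coefficient of $\XX^u_{s,t}$ is a sum over $(t,s,z)\in A$ of the monomial $\partial_{s_1}\psi^{w_1}\cdots\partial_{s_{\abs w}}\psi^{w_{\abs w}}(\phi(X_s))\cdot\partial_{z^1_1}\phi^{s_{1,1}}\cdots\partial_{z^{\abs w}_{\abs{s_{\abs w}}}}\phi^{s_{\abs w,\abs{s_{\abs w}}}}(X_s)$.

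The final step is to invoke Lemma \ref{lem:algebraicTechnical}: the map $(t,s,z)\mapsto(v(t,s,z),s,z)$ is a bijection $A\to B$ with inverse $(v,s,z)\mapsto(t(v,s,z),s,z)$, and crucially it leaves the $s$- and $z$-data untouched. One checks that under this bijection the two monomials coincide: the $\psi$-factor depends only on $s$ (and $w$), hence is literally unchanged; the $\phi$-factor on the $A$-side is the product of $\partial_{z^i_{k_i}}\phi$ over all pairs $(i,k_i)$ in the index set $I$, with the $(i,k_i)$-th superscript being the $k_i$-th letter $s_{i,k_i}$ of $s_i$, while on the $B$-side it is $\partial_{z_j}\phi^{v_j}$ over $j=1,\dots,\abs v$; since $v=s_{\sigma(1)}\cdots s_{\sigma(m)}$ and $z_j=z^{\sigma(j)}$ by construction of $\zeta$ in Lemma \ref{lem:algebraicTechnical}, reordering the product identifies the two. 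Therefore the coefficient of each $\XX^u_{s,t}$ agrees, so $\YY^w_{s,t}\approx\ZZ^w_{s,t}$ for every $w$; since both are rough paths (almost-multiplicative functionals determine a unique rough path, as recalled before \eqref{eq:pushForward}), we conclude $\psi_*(\phi_*\XX)=(\psi\circ\phi)_*\XX$.

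The main obstacle is purely bookkeeping: making sure the index set appearing in the expansion \eqref{eq:Y} of the iterated push-forward really is $B$ verbatim (in particular that the constraint $\abs u\ge\abs v\ge\abs w$ and the two nested ordered-deshuffle conditions come out correctly from two applications of \eqref{eq:pushForward}), and that the chain-rule expansion of \eqref{eq:Z} via Lemma \ref{lem:chainRuleWords} really yields the index set $A$ verbatim — and then tracking the superscripts of $\phi$ through the bijection $\zeta$ carefully enough to see the two products of partials match. None of this is deep, but it requires keeping the multi-indices $w,v,u,s,t,z$ and the permutation $\sigma$ straight; the whole content beyond this is already packaged in Lemmas \ref{lem:algebraicTechnical} and \ref{lem:chainRuleWords}.
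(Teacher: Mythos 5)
Your proposal matches the paper's proof essentially verbatim: both fix $w$ and $u$, identify the index sets $A$ and $B$ from the two expansions \eqref{eq:Y} and \eqref{eq:Z}, and invoke the bijection $\zeta$ from Lemma \ref{lem:algebraicTechnical} to match coefficients of $\XX^u_{s,t}$, concluding by uniqueness of the rough path determined by an almost-multiplicative functional. Your write-up is somewhat more careful than the paper's about tracking the $\phi$-superscripts through $\zeta$, but the argument is the same.
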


\begin{proof}
    We need to show, that $\ZZ = \YY$. To do so, it suffices to show that the RHS of \eqref{eq:Z} and \eqref{eq:Y} coincide. Thus, we need to show that for any words $w,u$, we have that
    \begin{align*}
        &\sum_{(t_1,\dots,t_{\abs w})\in\tilde\Delta^{\abs w}(u)}\sum_{\abs{s_1}\le\abs{t_1},\dots,\abs{s_{\abs w}}\le\abs{t_{\abs w}}} \partial_{s_1}\psi^{w_1}\dots\partial_{s_{\abs w}}\psi^{w_{\abs w}}(\phi(X_s)) \\
    &\qquad \sum_{(z^1_1,\dots,z^1_{\abs{s_1}})\in\tilde\Delta^{\abs {s_1}}(t_1),\dots,(z^{\abs w}_1,\dots z^{\abs w}_{\abs{s_{\abs w}}})\in\tilde\Delta^{\abs{s_{\abs w}}}(t_{\abs{w}})}\partial_{z^1_1} \phi^{s_{1,1}}\dots\partial_{z^{\abs w}_{\abs{s_{\abs w}}}}\phi^{s_{\abs w, \abs{s_{\abs w}}}}(X_s)\\
    &= \sum_{\abs u \ge \abs v\ge \abs w} \left(\sum_{(s_1,\dots,s_{\abs w})\in\tilde\Delta^{\abs w}(v)} \partial_{s_1}\psi^{w_1}\dots\partial_{s_{\abs w}}\psi^{w_{\abs{w}}}(\phi(X_s))\right)\\ &\qquad \left(\sum_{(z_1,\dots,z_{\abs v})\in\tilde\Delta^{\abs v}(u)} \partial_{z_1}\phi^{v_1}\dots\partial_{z_{\abs v}}\phi^{v_{\abs{v}}}(X_s)\right)
    \end{align*}
    By noting that both expressions are sums over
    \begin{equation*}
        \partial_{s_1}\psi^{w_1}\dots\partial_{s_{\abs w}}\psi^{w_{\abs{w}}}(\phi(X_s))\partial_{z_1}\phi^{v_1}\dots\partial_{z_{\abs v}}\phi^{v_{\abs{v}}}(X_s)
    \end{equation*}
    Using that the map $\zeta:A\to B$ from Lemma \ref{lem:algebraicTechnical} is a bijection, we see that both formulas are the same, showing the claim.
\end{proof}

\bibliographystyle{alpha}
\bibliography{ref}

\end{document}